          \theoremstyle{definition}
          \newtheorem{theorem}{Theorem}[section]
          \newtheorem{prop}[theorem]{Proposition}
          \newtheorem{lem}[theorem]{Lemma}
          \newtheorem{cor}[theorem]{Corollary}
          \newtheorem{defn}[theorem]{Definition}
          \newtheorem{exam}[theorem]{Example}
          \newtheorem{rem}[theorem]{Remark}
          \newtheorem{ques}[theorem]{Question}
\numberwithin{equation}{section}
          \newcommand{\nc}{\newcommand}
          \nc{\DMO}{\DeclareMathOperator}	
          \nc{\commentout}[1]{}
          \nc{\newnotation}{\nomenclature}
          \nc{\wrap}{\cW}
          \nc{\Tw}{\mathsf{Tw}}
          \nc{\loc}{\mathsf{Loc}}
          \nc{\Top}{Top}
          \nc{\emb}{\mathsf{emb}}
          \nc{\ind}{\mathsf{Ind}}
          \nc{\Ind}{\mathsf{Ind}}
          \nc{\Loc}{\mathsf{Loc}}
          \nc{\Cob}{\mathsf{Cob}}
          \nc{\mul}{\mathsf{Mul}}
          \nc{\fat}{\mathsf{fat}}
          \nc{\cob}{\mathsf{Cob}}
          \nc{\coh}{\mathsf{Coh}}
          \nc{\Liouaut}{\Aut_{\mathsf{Liou}}}
          \nc{\idem}{\mathsf{Idem}}
          \nc{\sets}{\mathsf{Sets}}
          \nc{\near}{\mathsf{near}}
          \nc{\sing}{\mathsf{Sing}}
          \nc{\Sing}{\mathsf{Sing}}
          \nc{\perf}{\mathsf{Perf}}
          \nc{\block}{\mathsf{block}}
          \nc{\ssets}{\mathsf{sSets}}
          \nc{\cmpct}{\mathsf{cmpct}}
          \nc{\compact}{\mathsf{cmpct}}
          \nc{\pwrap}{\mathsf{PWrap}}
          \nc{\coder}{\mathsf{Coder}}
          \nc{\bimod}{\mathsf{Bimod}}
          \nc{\grmod}{\mathsf{GrMod}}
          \nc{\Morita}{\mathsf{Morita}}
          \nc{\morita}{\mathsf{Morita}}
          \nc{\spaces}{\mathsf{Spaces}}
          \nc{\pwrms}{\mathsf{PWrFuk}_{M,S}}
          \nc{\pwrmf}{\mathsf{PWrFuk}_{M,F}}
          \nc{\pwrapmf}{\mathsf{PWrFuk}_{M,F}}
          \nc{\fuk}{\mathsf{Fukaya}}
          \nc{\infwr}{\mathsf{InfWr}}
          \nc{\fukaya}{\mathsf{Fukaya}}
          \nc{\autml}{\mathsf{Aut}_{M,\Lambda}}
          \nc{\fukml}{\mathsf{Fukaya}_{M,\Lambda}}
          \nc{\fukmle}{\mathsf{Fukaya}_{M,\Lambda,\epsilon}}
          \nc{\fukmod}{\wrfukcompact(M)\modules}
          \nc{\lag}{\mathsf{Lag}}
          \nc{\lagm}{\lag_M}
          \nc{\lago}{\lag^o}
          \nc{\lagml}{\lag_{M,\Lambda}} 
          \nc{\lagmle}{\lag_{M,\Lambda,\epsilon}}
          \nc{\Fun}{\mathsf{Fun}}
          \nc{\fun}{\mathsf{Fun}}
          \nc{\vect}{\mathsf{Vect}}
          \nc{\chain}{\mathsf{Chain}}
          \nc{\chainn}{Chain}
          \nc{\wrfuk}{\mathsf{WrFukaya}}
          \nc{\wrfukcompact}{\mathsf{WrFukaya}_{\mathsf{cmpct}}}
          \nc{\pwrfuk}{\mathsf{PWrFukaya}}
          \nc{\inffuk}{\mathsf{InfFuk}}
          \nc{\pwrfukml}{\mathsf{PWrFukaya}_{M,\Lambda}}
          \nc{\inffukml}{\mathsf{InfFuk}_{M,\Lambda}}
          \nc{\nattrans}{\mathsf{NatTrans}}
          \nc{\corres}{\mathsf{Corres}}
          \nc{\fukep}{\fukaya_\Lambda(M,\epsilon)}
          \nc{\fukepop}{\fukaya_\Lambda(M,\epsilon)^{\op}}
          \nc{\lagep}{\lag_\Lambda(M,\epsilon)}
          \DMO{\cyl}{cyl} 
          \nc{\dbcoh}{D^b\mathsf{Coh}}
          \nc{\corr}{\mathsf{Corr}}
          \nc{\Liouauto}{{\Aut^o}}
          \nc{\Liouautb}{\Aut^{b}}
          \nc{\Liouautgr}{{\Aut^{gr}}}
          \nc{\Liouautgrb}{\Aut^{gr,b}}
          \nc{\Fuk}{\mathsf{Fuk}}
          \DMO{\im}{im}
          \DMO{\ev}{ev}
          \DMO{\stable}{Ex}
          \DMO{\inj}{inj}
          \DMO{\fib}{fib}
          \DMO{\conf}{Conf}
          \DMO{\chains}{Chains}
          \DMO{\cochains}{Cochains}
          \DMO{\cone}{Cone}
          \DMO{\Map}{Map}
          \DMO{\ran}{Ran}
          \DMO{\rot}{Rot}
          \DMO{\leg}{Leg}
          \DMO{\imm}{imm}
          \DMO{\adj}{adj}
          \DMO{\symp}{Symp}
          \DMO{\tree}{Tree}
          \DMO{\cube}{Cube}
          \DMO{\deep}{deep}
          \DMO{\back}{back}
          \DMO{\Hoch}{Hoch}
          \DMO{\front}{front}
          \DMO{\flow}{Flow}
          \DMO{\floer}{Floer}
          \DMO{\Maps}{Maps}
          \DMO{\exact}{exact}
          \DMO{\excess}{Excess}
          \DMO{\Decomp}{Decomp}
          \DMO{\decomp}{Decomp}
          \DMO{\collar}{collar}
          \DMO{\yoneda}{Yoneda}
          \DMO{\hamspace}{Ham}
          \DMO{\sympspace}{Symp}
          \DMO{\holomaps}{Holomaps}
          \DMO{\comp}{Comp}
          \DMO{\crit}{Crit}
          \DMO{\test}{{test}}
          \DMO{\sign}{sign}
          \DMO{\topp}{top}
          \DMO{\indx}{Index}
          \DMO{\Break}{Break} 
          \DMO{\zero}{zero} 
          \DMO{\ob}{Ob}
          \DMO{\gr}{Gr} 
          \DMO{\Gr}{Gr} 
          \DMO{\cl}{Cl} 
          \DMO{\grlag}{GrLag}
          \DMO{\Pin}{Pin}
          \DMO{\Graph}{Graph}
          \DMO{\pin}{Pin}
          \DMO{\gap}{Gap}
          \DMO{\Ex}{Ex}
          \DMO{\id}{id}
          \DMO{\End}{End}
          \DMO{\sym}{Sym}
          \DMO{\aut}{Aut}
          \DMO{\Aut}{Aut}
          \DMO{\haut}{hAut}
          \DMO{\hAut}{hAut}
          \DMO{\DK}{DK} 
          \DMO{\poly}{poly} 
          \DMO{\diff}{Diff}
          \DMO{\coll}{coll}
          \DMO{\dist}{dist} 
          \DMO{\coker}{coker} 
          \nc{\kernel}{\ker} 
          \DMO{\sspan}{span}
          \DMO{\hocolim}{hocolim}	
          \DMO{\holim}{holim}
          \DMO{\sk}{sk}
          \DMO{\Symp}{Symp}
          \DMO{\ho}{ho}
          \DMO{\fin}{fin}
          \DMO{\tor}{Tor}
          \DMO{\ext}{Ext}
          \DMO{\ret}{Ret}
          \DMO{\ham}{Ham}
          \DMO{\con}{con}
          \DMO{\leaf}{leaf}
          \DMO{\supp}{supp}
          \DMO{\edge}{edge}
          \DMO{\colim}{colim}
          \DMO{\edges}{edges}
          \DMO{\Image}{image}
          \DMO{\roots}{roots}
          \DMO{\height}{height}
          \DMO{\finmod}{FinMod}
          \DMO{\leaves}{leaves}
          \DMO{\planar}{planar}
          \DMO{\vertices}{vertices}
\nc{\norm}[2]{{ \ensuremath{\|} #1 \ensuremath{\|}}_{#2}}
\nc{\Dbar}[1]{\ensuremath{{\bar{\partial}}_{#1}}}
\nc{\Ce}{\ensuremath{\mathbb{C}}}
\nc{\B}{\ensuremath{\mathbb{B}}}
\nc{\osc}{\operatorname{osc}}
\nc{\leng}{\operatorname{leng}}
          \nc{\lagg}{\lag^{\cG}}
          \nc{\iso}{\mathsf{Iso}}
          \nc{\Set}{\mathsf{Set}}
          \nc{\Ass}{\mathsf{ \bf Ass}}
          \nc{\Mod}{\mathsf{Mod}}
          \nc{\modules}{\mathsf{Mod}}
          \nc{\sset}{\mathsf{sSet}}
          \nc{\liou}{\mathsf{Liou}}
          \nc{\poset}{\mathsf{Poset}}
          \nc{\trno}{T^*\RR^n_{\geq 0}}
          \nc{\spectra}{\mathsf{Spectra}}
          \nc{\tensorfin}{\tensor^{\fin}}
          \nc{\lagptg}{\lag_{pt,pt}^{\cG}}
          \nc{\Fin}{\mathcal{F}\mathsf{in}}
          \nc{\lagnl}{\lag_{N,\Lambda}}
          \nc{\lagmlg}{\lag_{M,\Lambda}^{\cG}}
          \nc{\lagsplit}{\lag^{\mathsf{split}}}
          \nc{\lagktimes}{(\lag^{\dd k})^\times}
          \nc{\lagplanar}{\lag^{\times,\planar}}
          \nc{\Cont}{\text{\rm Cont}}
          \nc{\Ham}{\text{\rm Ham}}
          \nc{\Dev}{\text{\rm Dev}}
          \nc{\Lin}{\text{\rm Lin}}
          \nc{\Int}{\text{\rm Int}}
          \nc{\Hom}{\text{\rm Hom}}
          \nc{\Chord}{\text{\rm Chord}}
          \nc{\nbhd}{\mathcal{N}\text{\rm{bhd}}}
          \nc{\onef}{1_{\fukaya}}
          \nc{\smsh}{\wedge}
          \nc{\un}{\underline}
          \nc{\xto}{\xrightarrow}
          \nc{\xra}{\xto}
          \nc{\tensor}{\otimes}
          \nc{\del}{\partial}
          \nc{\dd}{\diamond}
          \nc{\tri}{\triangle}
          \nc{\bb}{\Box}
          \nc{\into}{\hookrightarrow}
          \nc{\onto}{\twoheadrightarrow}
          \nc{\contains}{\supset}
          \nc{\transverse}{\pitchfork}
          \nc{\uncirc}{\underline{\circ}}
          \nc{\thetacontact}{\theta} 
          \nc{\Jbar}{\overline{J}}
          \nc{\Fbar}{\overline{F}}
          \nc{\delbar}{\overline{\del}}
          \nc{\thetabar}{\overline{\theta}}
          \nc{\omegabar}{\overline{\omega}}
          \nc{\Liou}{\text{\rm Liou}}
          \nc{\Yhat}{\widehat{Y}}
          \nc{\Mliou}{M}
          \nc{\vece}{ {\vec \epsilon}}	
          \nc{\vecd}{ {\vec \delta}}
          \nc{\ov}{\overline}
          \DMO{\op}{op}
          \nc{\opp}{ ^{\op}}
          \nc{\hiro}{\textcolor{blue}}
          \nc{\YG}{\textcolor{orange}}
		  \nc{\eqn}{\begin{equation}}
          \nc{\eqnn}{\begin{equation}\nonumber}
          \nc{\eqnd}{\end{equation}}
          \nc{\enum}{\begin{enumerate}}
          \nc{\enumd}{\end{enumerate}}
          \nc{\beastar}{\begin{eqnarray*}}
          \nc{\eeastar}{\end{eqnarray*}}
\numberwithin{equation}{section}
\def\R{{\mathbb R}}
\def\osc{{\hbox{\rm osc }}}
\def\E{{\mathbb E}}
\def\Z{{\mathbb Z}}
\def\C{{\mathbb C}}
\def\R{{\mathbb R}}
\def\N{{\mathbb N}}
\def\11{{\mathbb I}}
\def\Jbar{{\widetilde J}}
\def\delbar{{\overline \partial}}
          \def\cG{\mathcal G}
          \def\cW{\mathcal W}\def\c\Mliou{\mathcal \Mliou}
          \def\RR{\mathbb R}
          \def\\Mliou\Mliou{\mathbb \Mliou}
          \def\s\Mliou{\mathsf \Mliou}
          \def\b\Mliou{\mathbf \Mliou}
          \def\f\Mliou{\mathfrak \Mliou}
\def\Jbar{{\widetilde J}}
\def\delbar{{\overline \partial}}
\def\b{\beta}
\def\c{\chi}
\def\f{\phi}
\def\s{\sigma}
\def\CB{{\mathcal B}}
\def\CG{{\mathcal G}}
\def\CH{{\mathcal H}}
\def\CJ{{\mathcal J}}
\def\CM{{\mathcal M}}
\def\CP{{\mathcal P}}
\def\CP{{\mathcal P}}
\def\CT{{\mathcal T}}
\def\CU{{\mathcal U}}
\def\CV{{\mathcal V}}
\def\darr#1{\raise1.5ex\hbox{$\leftrightarrow$}
\mkern-16.5mu #1}
\def\roughly#1{\raise.3ex\hbox{$#1$\kern-.75em
\lower1ex\hbox{$\sim$}}}
\def\opname#1{\mathop{\kern0pt{\rm #1}}\nolimits}
\def\End{\opname{End}}
\def\vol{\opname{vol}}
\def\dist{\opname{dist}}
\def\supp{\operatorname{supp}}
\def\Dev{\operatorname{Dev}}
\def\leng{\operatorname{leng}}
\def\End{\operatorname{End}}
\def\Aut{\operatorname{Aut}}
\def\coker{\operatorname{Coker}}
\def\Cont{\operatorname{Cont}}
\def\Sing{\operatorname{Sing}}
\def\Image{\operatorname{Image}}
\def\ev{\operatorname{ev}}
\def\Int{\operatorname{Int}}
\def\ben{\begin{enumerate}}
\def\een{\end{enumerate}}
\def\be{\begin{equation}}
\def\ee{\end{equation}}
\def\bea{\begin{eqnarray}}
\def\eea{\end{eqnarray}}
\def\beastar{\begin{eqnarray*}}
\def\eeastar{\end{eqnarray*}}
\def\bc{\begin{center}}
\def\ec{\end{center}}
\renewcommand{\b}{\beta}
\def\Hoch{{\tt Hoch}}
\def\Cont{\operatorname{Cont}}
\def\Sing{\operatorname{Sing}}
\def\Ham{\operatorname{Ham}}
\def\Graph{\operatorname{Graph}}
\def\id{\operatorname{Id}}
\def\Riem{\operatorname{Riem}}
\def\E{\ifmmode{\mathbb E}\else{$\mathbb E$}\fi} 
\def\N{\ifmmode{\mathbb N}\else{$\mathbb N$}\fi} 
\def\R{\ifmmode{\mathbb R}\else{$\mathbb R$}\fi} 
\def\Q{\ifmmode{\mathbb Q}\else{$\mathbb Q$}\fi} 
\def\C{\ifmmode{\mathbb C}\else{$\mathbb C$}\fi} 
\def\H{\ifmmode{\mathbb H}\else{$\mathbb H$}\fi} 
\def\Z{\ifmmode{\mathbb Z}\else{$\mathbb Z$}\fi} 
\def\Hoch{{\tt Hoch}}
\def\Cont{\operatorname{Cont}}
\def\Sing{\operatorname{Sing}}
\def\Ham{\operatorname{Ham}}
\def\Graph{\operatorname{Graph}}
\def\darr#1{\raise1.5ex\hbox{$\leftrightarrow$}
\mkern-16.5mu #1}
\def\roughly#1{\raise.3ex\hbox{$#1$\kern-.75em
\lower1ex\hbox{$\sim$}}}
\def\opname#1{\mathop{\kern0pt{\rm #1}}\nolimits}
\def\End{\opname{End}}
\def\vol{\opname{vol}}
\def\dist{\opname{dist}}
\def\supp{\operatorname{supp}}
\def\Dev{\operatorname{Dev}}
\def\leng{\operatorname{leng}}
\def\End{\operatorname{End}}
\def\Aut{\operatorname{Aut}}
\def\coker{\operatorname{Coker}}
\def\Cont{\operatorname{Cont}}
\def\Sing{\operatorname{Sing}}
\def\Image{\operatorname{Image}}
\def\ev{\operatorname{ev}}
\def\Int{\operatorname{Int}}
\begin{document}

\quad \vskip1.375truein

\title[Injectivity radius lower bound]{Injectivity radius lower bound of convex sum of 
tame Riemannian metrics and applications to symplectic topology}

\author{Jaeyoung Choi and Yong-Geun Oh}
\address{POSTECH, 77 Cheongam-ro, Nam-gu, Pohang-si, Gyeongsangbuk-do, Korea 37673 \&
 Center for Geometry and Physics Institute, for Basic Science (POSTECH Campus)
79, Jigok-ro 127beon-gil, Nam-gu, Pohang-si, Gyeongsangbuk-do,  Korea 37673}
\email{jaeyoungkun@postech.ac.kr}
\address{Center for Geometry and Physics Institute, for Basic Science (POSTECH Campus)
79, Jigok-ro 127beon-gil, Nam-gu, Pohang-si, Gyeongsangbuk-do,  Korea 37673, \&
POSTECH, 77 Cheongam-ro, Nam-gu, Pohang-si, Gyeongsangbuk-do, Korea 37673}
\email{yongoh1@postech.ac.kr}

\begin{abstract} 
Motivated by the aspect of large-scale symplectic topology, we prove that
for any pair $g_0, \, g_1$ of smooth complete Riemannian metrics of bounded curvature and \emph{of injectivity
radius bounded away from zero}, the convex sum $g_s: = (1-s ) g_0 + s g_1$ also has
bounded curvature depending only on the curvature bounds $\|R_{g_i}\|_{C^0}$
of $g_0$ or $g_1$, and that  the injectivity radii of $g_s$ have uniform lower bound
depending only on the derivative bounds 
$\|R_{g_i}\|_{C^1} = \|R_{g_i}\|_{C^0} + \|DR_{g_i}\|_{C^0}$. 
 A main technical ingredient
to establish the injectivity radius lower bound is an application of the 
\emph{quantitative inverse function theorem}. Using these estimates, we prove
that each \emph{quasi-isometry} class of tame metrics is convex \emph{for all finite regularity class of
$3 \leq r < \infty$.} Using this Riemannian geometry result, we prove that
the set of smooth \emph{$C^3$-tame} almost complex structures inside the same quasi-isometry class
associated to the symplectic form $\omega$ is contractible in any $C^k$ topology for all finite $k \geq 0$.
\end{abstract}

\keywords{
quasi-isometry, convex sum of metrics, geodesic flow, injectivity radii lower bounds,
quantitative inverse function theorem,  strong $C^r$ topology,  $\mathfrak T$-tame almost complex structure, $C^3$-tameness of almost complex structure}

\subjclass[2020]{53C20, 53D35}
\thanks{This work is supported by the IBS project \# IBS-R003-D1.}
\date{}

\maketitle

\tableofcontents

\section{Introduction}

In a recent work of the present authors  \cite{choi-oh:construction},
we construct a Fukaya category on infinite-type surfaces and prove that
the $A_\infty$ category is not quasi-equivalent to the colimit of  Fukaya categories of
finite-type surfaces. This category is an invariant under the
deformation of tame $J$'s that is compatible to a \emph{fixed} Riemannian metric of bounded
curvature but not when the Riemannian metric goes out of the quasi-isometry class of the given
metric.  In this regard, the invariants arising from \cite{choi-oh:construction} (or any symplectic
invariants relying on the structure of ideal boundary on noncompact symplectic manifolds in that matter) are
not exactly symplectic invariants but are \emph{large-scale geometric symplectic invariants!}
This case concerns noncompact symplectic manifolds of infinite type.

In general higher dimensional situations, if a noncompact symplectic manifold $M$ has a suitably good
compactification $\widehat M$ such as a complete Liouville manifold so that 
 $\widehat M$ is given by the union
$$
M \sqcup \del_\infty M
$$
together with the Liouville embedding $\del_\infty M \times [0, \infty) \hookrightarrow M$,
then the Liouville symplectic form $\omega = d\lambda$ carries a canonically associated
quasi-isometry class $\mathfrak T$ induced by the image of
$\del_\infty M \times [0,\infty)$ in $M$. (See Remark \ref{rem:symplectization} for the description of
such a quasi-isometry class.)  The growth rate invariants of symplectic homology of affine algebraic varieties
is another such large-scale symplectic topological invariants.
(See \cite{seidel:biased}, \cite{mclean:growthrate} for the study thereof.)
This case concerns noncompact symplectic manifolds of finite type.

\emph{All objects of consideration of the present paper, such as Riemannian metrics and almost complex structures
are assumed to be of $C^\infty$-class unless otherwise said.}

\subsection{Statements of main results: symplectic topology}

Some more background and motivation of our study of large scale Riemannian geometry
presented in the present article are now in order.
The well-known Gromov's lemma \cite{gromov:invent} on noncompact
 symplectic manifolds  holds in weak $C^\infty$ topology but
 fails to hold in strong $C^\infty$ topology if \emph{two metrics associated to the almost complex structures
 tame to a given symplectic form $\omega$ are allowed to vary beyond 
 their quasi-isometry class},
or not necessarily bilipschitz equivalent. (See Example 
\ref{exam:2-dim} for such an example.)
 It is generally said that an almost complex structure 
$J$ on a symplectic manifold $(M,\omega)$ is called $\omega$-tame if the bilinear form
$g_J: =\omega(\cdot, J\cdot)$ is a symmetric positive definite, i.e., defines a Riemannian metric
and there exists a tame metric $g$ that satisfies \eqref{eq:A-quasiisometry}
for some $A \geq 1$. Denote by $\CJ_\omega$ the set of such almost complex structures. 

However to do the geometric
analysis of pseudoholomorphic curves on noncompact symplectic manifolds, it is important to 
\emph{fix} a tame behavior of the metric $g$ appearing here because  the Riemannian
metrics $g_{J_0}$ and $g_{J_1}$ associated to two $\omega$-compatible
almost complex structures $J_0$ and $J_1$  are not necessarily quasi-isometric as mentioned above.  
When this happens, any symplectic invariants 
constructed using $J_0$ and $J_1$ via the pseudoholomorphic curves have no reason
to be the same, \emph{when the construction involve the ideal boundary of the noncompact
symplectic manifolds}. The Fukaya category constructed for the infinite-type surface in 
\cite{choi-oh:construction} or the aforementioned growth rate of symplectic homology of 
affine algebraic varieties are examples of such invariants. A difference between the two 
situations is that the former concerns the case of non-cylindrical end while the latter
does the case of cylindrical end.

The results on Riemannian geometry in the present paper
and their implications to symplectic topology indicate that 
for the applications of the methodology of pseudoholomorphic curves to 
large scale symplectic topology of a noncompact symplectic manifold $(M,\omega)$,
\emph{one needs to  introduce the following notion of $(\omega,\mathfrak T)$-tame
almost complex structures, and assume at least $C^3$-tameness
for the almost complex structures $J$ with respect to the symplectic form $\omega$.}
(See Definition \ref{defn:Ck-tame} below for the definition of $C^r$-tameness.)

\begin{defn}[$(\omega,\mathfrak{T})$-tame almost complex structures]
\label{defn:omegaT-tame-intro}
Let $\mathfrak{T}$ be a given quasi-isometry class of Riemannian metrics on $M$.
 We call $J$ a $C^r$ $(\omega,\mathfrak{T})$-tame almost complex structure  if the following hold:
 \begin{enumerate}
 \item It is $\omega$-tame.
 \item The Riemannian metric $g_J = \omega(\cdot, J \cdot)$ is in 
 the quasi-isometry class $\mathfrak{T}$ and $C^r$-tame.
 \end{enumerate}
 We denote by $\CJ_{\omega;\mathfrak{T}}$ the set of such almost complex structures.
 \end{defn}

\begin{rem}\label{rem:strict-convexity}
To the best knowledge of the present authors, such a $C^3$-requirement has not been
recognized in the previous literature of symplectic topology. We believe that this $C^3$-tameness 
should be mentioned to ensure contractibiliy of $\omega$-tame 
almost complex structures \emph{with positive injectivity radius},
and required in the definition of tame almost complex structures 
 for the purpose of constructing (large scale) symplectic topological
invariants of noncompact symplectic manifolds.  This is because the natural candidate of
the contraction to a reference tame metric $g_{\text{\rm ref}}$ given by
\be\label{eq:contraction}
(t,g)  \mapsto t g_{\text{\rm ref}} + (1-t) g
\ee
may not have a positive injectivity radius lower bound over $t \in [0,1]$ with $C^r$-tameness
with $r < 3$. See the next subsection, especially Remark \ref{rem:non-convexity},
for further expounding of this Riemannian geometric difficulty.
\end{rem}

Represent the given $\mathfrak{T}$ by a Riemannian metric $g$.
A priori the answer to the question whether or not 
the subset $\CJ_{\omega;\mathfrak{T}} \subset \CJ_{\omega}$
is connected is not known because convexity of the set
\be\label{eq:gJ}
\{g_J \mid J \in \CJ_{\omega;\mathfrak{T}} \} =: \Riem_{\omega;\mathfrak{T}}(M)
\ee
is not known  as indicated in
Remark \ref{rem:strict-convexity} and \cite{nabutovsky}.  Recall that the 
`known proof' of contractibility relies on the 
presumption that `the set of tame metrics is strictly convex' so that the image of the
path \eqref{eq:contraction} is contained in \eqref{eq:gJ}.  

In this regard we prove that the following result,
 the precise statement of which we refer to Theorem \ref{thm:omegaTJ-contractible}.

\begin{theorem} \label{thm:omegaTJ-contractible-intro} Let $(M,\omega)$ be a tame
symplectic manifold. The set $\CJ_{\omega;\mathfrak{T}}$ consisting of \emph{smooth} $C^r$-tame $r \geq 3$ is
contractible with respect to the strong $C^k$ topology for $0 \leq k < \infty$.
\end{theorem}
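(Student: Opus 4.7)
The plan is to reduce the assertion to the already-established contractibility of the quasi-isometry class $\mathfrak{T}$ of tame Riemannian metrics in strong $C^r$ topology ($3 \leq r < \infty$), together with the pointwise polar-decomposition correspondence between Riemannian metrics and $\omega$-tame almost complex structures on a symplectic manifold.

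First I would set up two natural assignments. For $J \in \CJ_{\omega;\mathfrak{T}}$ the bilinear form $g_J := \omega(\cdot, J\cdot)$ is, by Definition \ref{defn:omegaT-tame-intro}, a Riemannian metric lying in $\mathfrak{T}$. Conversely, given $g \in \mathfrak{T}$ define the fibrewise endomorphism $A_g$ by $\omega(X,Y) = g(A_g X, Y)$; it is $g$-skew and nondegenerate, so its polar decomposition yields an $\omega$-tame almost complex structure
$$
J(g) := \bigl(-A_g^2\bigr)^{-1/2} A_g,
$$
and a direct computation using $\omega$-tameness shows $J(g_J) = J$ whenever $J \in \CJ_{\omega;\mathfrak{T}}$. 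Both assignments are pointwise fibrewise smooth functional calculi whose derivatives of order $\leq r$ are controlled by those of $J$ (respectively $g$) together with the uniform bilipschitz bound between $g_J$ and a fixed representative of $\mathfrak{T}$, so they are continuous in strong $C^r$ topology.

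Next, I would construct the contracting homotopy. Fix a reference $J_* \in \CJ_{\omega;\mathfrak{T}}$ and set
$$
g_s(J) := (1-s) g_J + s g_{J_*}, \qquad H_s(J) := J\bigl(g_s(J)\bigr), \quad s \in [0,1].
$$
By the paper's main Riemannian geometry theorem, each $g_s(J)$ remains in $\mathfrak{T}$ with curvature and injectivity radius bounds depending only on the $C^1$-norms of $R_{g_J}$ and $R_{g_{J_*}}$, which are in turn controlled by the $C^3$-norms of $J$ and $J_*$. These uniform bounds guarantee that $A_{g_s(J)}$ is uniformly invertible, so the functional calculus defining $J(g_s(J))$ is well-defined and continuous in strong $C^r$ topology. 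At $s=0$ we have $H_0(J) = J(g_J) = J$, and at $s=1$ we have $H_1(J) = J(g_{J_*}) = J_*$, yielding the deformation retraction of $\CJ_{\omega;\mathfrak{T}}$ onto $\{J_*\}$.

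The main obstacle is verifying the strong $C^r$-continuity of $H$ uniformly over $\CJ_{\omega;\mathfrak{T}}$. Weak $C^r$-continuity of convex combinations and of polar decomposition is routine pointwise calculus, but the strong topology is sensitive to behaviour at infinity. This is precisely where the quantitative estimates of the paper enter: the uniform curvature and injectivity radius bounds on $g_s$, combined with the chain rule applied to the smooth functional calculus $A \mapsto (-A^2)^{-1/2} A$, yield uniform $C^r$-bounds on $H_s(J)$ in terms of $\|J\|_{C^r}$ and $\|J_*\|_{C^r}$. The $C^3$-tameness hypothesis is the exact threshold dictated by the injectivity radius estimate, which is established via the quantitative inverse function theorem and requires a $C^1$-bound on the curvature; it is therefore unavoidable in this approach.
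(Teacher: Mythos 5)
Your overall strategy coincides with the paper's: pass to the metric side via $\CG(J)=\omega(\cdot,J\cdot)$, contract linearly there using the convexity theorem, and come back via the pointwise polar decomposition $\CP(g)=J(g)$. The paper's contraction is literally $\CP\circ\CH_{g_0}\circ(\text{\rm Id}\times\CG)$, which is your $H_s$.

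However, there is a genuine gap in the step where you need $H_s(J)\in\CJ_{\omega;\mathfrak T}$. You correctly observe that $g_s(J)=(1-s)g_J+sg_{J_*}$ stays in $\mathfrak T$ by the convexity theorem. But membership of $H_s(J)$ in $\CJ_{\omega;\mathfrak T}$ requires the \emph{different} metric $g_{H_s(J)}=\omega(\cdot,H_s(J)\cdot)$ to lie in $\mathfrak T$, and the two-step process $g_s(J)\mapsto H_s(J)\mapsto g_{H_s(J)}$ does \emph{not} return $g_s(J)$ unless $g_s(J)$ is already $\omega$-compatible. Concretely, if $A_s$ is defined by $\omega(u,v)=g_s(A_su,v)$, then $g_{H_s(J)}(u,v)=g_s\bigl(\sqrt{A_sA_s^*}\,u,v\bigr)$, so one must produce two-sided uniform eigenvalue bounds on $\sqrt{A_sA_s^*}$ depending only on the quasi-isometric ratio between $g_J$ and $g_{J_*}$. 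Asserting that $A_{g_s(J)}$ is ``uniformly invertible'' does not supply this: you need both an upper and a lower spectral bound, uniform over $M$ and over $s\in[0,1]$, and you give no argument for why either holds. The paper devotes its Subsection on pinching estimates to exactly this: using $A_s^{-1}=-(1-s)J_0-sJ_1$ one computes $(A_s^*)^{-1}A_s^{-1}=\text{\rm Id}+s(1-s)(-J_0J_1-J_1J_0-2\,\text{\rm Id})$, observes $(-J_1J_0)^{-1}=-J_0J_1$, and deduces via the elementary spectral lemma on $M+M^{-1}$ and a positivity argument that the eigenvalues of $A_sA_s^*$ lie in a fixed interval $\bigl[\bigl(\tfrac12+\tfrac14(C+1/C)\bigr)^{-1},\,2\bigr]$ depending only on the quasi-isometric ratio $C$ of $g_J$ and $g_{J_*}$. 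Without this (or an equivalent quantitative argument), it remains unverified that the homotopy stays inside $\CJ_{\omega;\mathfrak T}$, which is the crux of the theorem. A secondary omission: you must also check that $g_{H_s(J)}$ is \emph{tame} (bounded curvature and positive injectivity radius), which the paper deduces from the quasi-isometry just obtained together with the non-collapsing theorem under $C^3$-continuous deformations.
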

We refer readers to \cite{hirsch} for the precise definition of strong $C^r$ topology which we
briefly recall in the beginning of Section \ref{sec:contractibility-g}.

An immediate corollary of Theorem \ref{thm:omegaTJ-contractible-intro}
 is the following special case of two-dimensions. This 
is needed in the verification that the aforementioned Fukaya category constructed in 
\cite{choi-oh:construction} is indeed a large-scale symplectic topological invariant
depending only on the quasi-isometry class of tame Riemannian metric.
(See Theorem \ref{thm:convexity} stated below to see how the aspect of 
this Riemannian geometry enters.)

\begin{cor}[Theorem \ref{thm:surface-case}]
Let $\Sigma$ be a noncompact surface equipped with hyperbolic structure.
Denote by $\mathfrak T$ a quasi-isometry class of hyperbolic structures of $\Sigma$ and by $\Riem_\mathfrak T(\Sigma)$
the set of Riemannian metrics quasi-isometric to the given hyperbolic metric. Then
the set $\Riem_\mathfrak T(\Sigma)$ consisting of $C^r$-tame metrics of $3 \leq r < \infty$
is contractible in strong  $C^k$ topology for finite $0 \leq k < \infty$.
\end{cor}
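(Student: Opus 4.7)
The strategy is to deduce the corollary directly from the main convexity theorem (Theorem \ref{thm:convexity}), which asserts that a quasi-isometry class of tame metrics is convex and contractible in strong $C^r$ topology for $3 \leq r < \infty$. The entire content of the corollary is thus the verification that $\Riem_{\mathfrak{T}}(\Sigma)$ is a quasi-isometry class of tame metrics in the sense required by that theorem, together with the application of the general contractibility argument in the two-dimensional setting.

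First I would check that the hyperbolic representative $g_{\mathrm{hyp}}$ of $\mathfrak{T}$ is tame in the sense of the main Riemannian estimate: the curvature bounds $\|R_{g_{\mathrm{hyp}}}\|_{C^r}$ are automatic for every $r$ since $g_{\mathrm{hyp}}$ has constant sectional curvature $-1$, and the injectivity radius is bounded away from zero by the standing bounded-geometry hypothesis implicit in the choice of $\mathfrak{T}$ (noncompact surfaces with hyperbolic structure of bounded geometry, that is, without developing cusps). Any $g \in \Riem_{\mathfrak{T}}(\Sigma)$ is by definition $A$-quasi-isometric to $g_{\mathrm{hyp}}$ for some $A>0$, and the $C^3$-tameness hypothesis is what the ambient definition of $\Riem_{\mathfrak T}(\Sigma)$ supplies in order for curvature and its first derivative to be controlled.

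Next, apply Theorem \ref{thm:convexity} to any pair $g_0, g_1 \in \Riem_{\mathfrak{T}}(\Sigma)$: the convex sum $g_s := (1-s)g_0 + s g_1$ has bounded curvature depending only on $\|R_{g_i}\|_{C^0}$, and, by the quantitative inverse function theorem argument of the main Riemannian geometry part of the paper, has injectivity radius bounded below uniformly in $s$ by a constant depending only on $\|R_{g_i}\|_{C^1}$. Combined with the obvious fact that $A$-quasi-isometry to $g_{\mathrm{hyp}}$ is preserved under convex combinations (with the same constant $A$), this gives $g_s \in \Riem_{\mathfrak{T}}(\Sigma)$ for all $s \in [0,1]$, so $\Riem_{\mathfrak{T}}(\Sigma)$ is convex.

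Finally, contractibility in strong $C^r$ topology follows from the straight-line homotopy
\begin{equation*}
H \colon \Riem_{\mathfrak{T}}(\Sigma) \times [0,1] \longrightarrow \Riem_{\mathfrak{T}}(\Sigma), \qquad H(g,s) = (1-s)\, g_{\mathrm{hyp}} + s\, g,
\end{equation*}
which is well-defined by the convexity just established and jointly continuous because addition and scalar multiplication on sections of $\operatorname{Sym}^2(T^*\Sigma)$ are continuous in the Whitney $C^r$ topology. The main obstacle is precisely the convexity step: a priori the convex sum of two tame metrics might either fail to remain bilipschitz to $g_{\mathrm{hyp}}$, or develop injectivity radius collapse; the former is ruled out by linearity of the pointwise quasi-isometry inequality, and the latter is exactly what the paper's injectivity radius lower bound (depending on $C^1$-bounds of the curvature tensor) is designed to prevent, which is why the regularity threshold $r \geq 3$ appears in the statement.
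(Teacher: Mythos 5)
Your reduction to the main convexity theorem is the right outline and matches the paper's, but the final continuity step rests on a false assertion, and that assertion is precisely what the paper's Theorem \ref{thm:Cr-contractibility} is written to replace.

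You justify joint continuity of the straight-line homotopy $H(g,s) = (1-s)\,g_{\mathrm{hyp}} + s\,g$ by saying that ``addition and scalar multiplication on sections of $\operatorname{Sym}^2(T^*\Sigma)$ are continuous in the Whitney $C^r$ topology.'' On a noncompact manifold this is not true: the strong (Whitney) $C^r$ topology does \emph{not} make the space of sections into a topological vector space, and in particular $(t,\sigma) \mapsto t\sigma$ is \emph{not} jointly continuous. (A standard example: on $\R$, take $\sigma$ unbounded; then $t\sigma \not\to 0$ in the strong $C^0$ topology as $t \to 0$, since for any $t \neq 0$ the sup-norm of $t\sigma$ is infinite.) This failure is exactly why the paper stresses, in the discussion surrounding Theorem \ref{thm:contractibility-intro}, that the continuity of $s \mapsto (1-s)g_0 + s g_1$ ``is obvious on a compact manifold, but depends on what topology $\ldots$ is used on a noncompact manifold.''

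What actually makes $H$ continuous is the chain of estimates in the proof of Theorem \ref{thm:Cr-contractibility}: the uniform injectivity radius lower bound (Theorem \ref{thm:iota-bounds}, which is the hard part and is what requires $C^3$-tameness) produces an atlas of geodesic normal balls of uniform size; on this atlas the quantity
\begin{equation*}
C^r(g,g_{\text{\rm ref}}) := \sup_\alpha \max_{1 \le k \le r} \|D^k(g - g_{\text{\rm ref}})\|_{B_\alpha(r)}
\end{equation*}
is finite by the curvature bounds encoded in tameness together with Lemma \ref{lem:cheeger}; and then $\|D^k(g_s - g_{s_0})\| = |s-s_0|\,\|D^k(g - g_{\text{\rm ref}})\|$ gives a $\delta$ depending on this finite bound. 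Without the finiteness of $C^r(g,g_{\text{\rm ref}})$ (which fails in the limit $r \to \infty$, hence the restriction $r < \infty$), there is no such $\delta$ and the homotopy is not continuous. So you should replace the appeal to ``continuity of vector space operations'' with an invocation of Theorem \ref{thm:Cr-contractibility} itself (or reproduce its estimate), since that is where the work lies. Your identification of the convexity step (Theorem \ref{thm:convexity}, injectivity radius non-collapse via the quantitative inverse function theorem) as the source of the $r \ge 3$ threshold is correct; the same estimates are also what makes the homotopy continuous, and that point should not be elided.
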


The finding of the present article is a byproduct of the present authors'
work \cite{choi-oh:construction}  on the Fukaya category of infinite-type
surfaces in which their category strongly relies on the underlying
hyperbolic structure of Riemann surfaces. 

\subsection{Statements of main results: Riemannian geometry}

The standard proof of contractibility of the set of almost complex structures tame to a given symplectic form 
is based on the contractibility of relevant tame Riemannian metrics. The proof of the 
contractibility of tame Riemannian metrics is based on the convexity thereof.
\begin{rem}\label{rem:non-convexity}
 The main source of the requirement $r \geq 3$ for this $C^r$ tameness 
lies in the fact that \emph{the injectivity radius function 
$g \mapsto \iota_g$  is not a continuous function even with the strong $C^\infty$ topology on
a noncompact manifold.} Because of this,  even when two complete Riemannian metrics
$g_0$ and $g_1$ have positive injectivity radii $\iota_{g_1}, \, \iota_{g_2}> 0$,  it is not a trivial matter
to check in general, even with the $C^\infty$ topology,
whether the positivity
$$
\inf_{t \in [0,1]}\{\iota_{g_t} \mid g_t: = (1-t) g_0 + t g_1\} > 0
$$
holds or not for the convex sum $t \mapsto (1-t) g_0 + tg_1$. (See \cite{nabutovsky} for the reason why.)
The finding of the present paper shows that we
need at least $C^3$-tameness for this positivity. At the moment it is not known whether
this infimum is positive or not when $k \leq 2$. 
\end{rem} 

To state the main results of the present paper on the global Riemannian geometry, we need to borrow the standard definitions of
quasi-isometry and bilipschitz equivalence from \emph{large-scale geometry}
or \emph{coarse geometry}.

\begin{defn}[Quasi-isometry]\label{defn:quasi-isometry} Two Riemannian metrics
$g_1, \, g_2$ on a smooth manifold are said to be quasi-isometric if there exists 
a constant $A\geq 1$ such that
\be\label{eq:A-quasiisometry}
\frac1A\, g_1(u,u)  \leq  g_2(u,u) \leq A\, g_1(u,u)
\ee
for all $u\in T_xM$ for all $x \in M$.
\end{defn}

We also consider the following equivalence relation, which is a macroscopic version of
the above quasi-isometry of Riemannian metrics. 

\begin{defn}[Bilipschitz]\label{defn:A-bilipschitz}
Two Riemannian metrics
$g_1, \, g_2$ on a smooth manifold are said to be \emph{$A$-bilipschitz}
if their associated distance functions are $A$-bilipschitz,
i.e., if there exists a constant $A\geq 1$ such that
\be\label{eq:A-bilipschiz}
\frac1A d_{g_1}(x,y)   \leq  d_{g_2}(x,y) \leq A d_{g_1}(x,y)
\ee
for all $x,\, y \in M$. Here we denote by $d_{g_i}$ the distance function of $g_i$.
We just say $g_1, \, g_2$ are \emph{bilipschitz} if they are $A$-bilipschitz for some $ 1\leq A < \infty$.
We call the associated equivalence class  a \emph{bilipschitz class}.
\end{defn}
It is easy to see that for the Riemannian distance metrics the above two notions of
the above quasi-isometry and the bilipschitz equivalence are equivalent.
(However one should recall that comparing two notions of ($C^0$) \emph{quasi-isometric equivalence} and
\emph{bilipshitz} in general large-scale metric geometry or in coarse geometry is a highly nontrivial problem.)
 
We will denote by $\mathfrak T$ a quasi-isometry class or equivalently a bilipschitz class
of a Riemannian metric. In the present paper, we will interchangeably use both
terms as we feel more appropriate depending on the circumstances.
We will also use the following terminology for the simplicity of exposition.
We denote by $\inj_g: M \to \R_+$ the function of pointwise injectivity radius $x \mapsto \inj_g(x)$.

\begin{defn}\label{defn:Ck-tame}
 Suppose that a complete Riemannian metric $g$ has positive injectivity radius lower bound
$\iota_g: = \inf_{x \in M}\inj_g(x) > 0$.
We call a metric $g$ $C^r$-\emph{tame} if there exists  a constant
$C_\ell > 0$ for each $0 \leq \ell \leq r -2$ such that
$$
\|D^\ell R_g\|_{C^0}=\sup_{x \in M} |D^\ell R_g(x)| < C_\ell.
$$
We just say that $g$ is tame if this holds for all $r \geq 2$.
\end{defn}	
From now on, we will always denote by $\mathfrak{T}$ a quasi-isometric class of
tame Riemannian metrics, and by $\Riem_{\mathfrak T}(M)$
 the set of tame metrics in class $\mathfrak{T}$, unless explicitly mentioned otherwise.

The first main result of Part I of the present paper is  the following convexity
result of $\mathfrak{T}$.

\begin{theorem}[Theorem \ref{thm:Rs-bounds}, \ref{thm:DRs-bounds} \& \ref{thm:iota-bounds}]
\label{thm:convexity}
Let $g_0, \, g_1$ be a pair of complete $C^3$-tame Riemannian metrics in the same
quasi-isometry class. Suppose $\|R_{g_k}\|_{C^1} \leq C$ 
and $\iota_{g_k} > 0$ for $k = 0, \, 1$. 
Consider the convex sum $g_s = (1-s)g_0 + sg_1$, and 
put $\epsilon_k =\iota_{g_k} > 0$ for $k = 0, \, 1$. Then the following hold: 
\begin{enumerate} 
\item There exists $\mathfrak C = \mathfrak C(C,\epsilon_0,\epsilon_1) > 0$ such that 
\be
\|R_s\|_{C^1} \leq \mathfrak C.
\ee
\item There exists some $\epsilon' = \epsilon'(C, \epsilon_0, \epsilon_1) > 0$
such that
$$
\iota_{g_s} \geq \epsilon' > 0
$$
for all $s \in [0,1]$.
\end{enumerate}
In particular, the set $\Riem_{\mathfrak T}(M)$ is strictly convex.
\end{theorem}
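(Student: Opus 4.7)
I would approach this by combining local coordinate estimates with the quantitative inverse function theorem, exploiting the fact that the pointwise quasi-isometry $A^{-1}g_0 \leq g_1 \leq A g_0$ is preserved under convex combinations, so $g_s$ and $g_s^{-1}$ are uniformly pointwise bounded for every $s\in[0,1]$. For part~(1), I would work in $g_0$-normal coordinates around each point $p$, which are valid on a ball of radius at least $\epsilon_0$. The hypothesis $\|R_{g_0}\|_{C^1}\leq C$, combined with the Jacobi equation and its covariant derivative, yields uniform $C^3$-bounds on the components of $g_0$ in these coordinates. To transfer the analogous bound to $g_1$, I would control the change-of-coordinates map $\phi_p := (\exp^{g_1}_p)^{-1}\circ\exp^{g_0}_p$ in $C^3$ using the curvature and injectivity radius bounds of both metrics; pulling back the intrinsic $C^3$-bounds of $g_1$ (which hold in its own normal coordinates) through $\phi_p$ then gives $C^3$-bounds on $g_1$ in $g_0$-normal coordinates. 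Since $g_s$ is therefore a $C^3$-bounded convex combination and $g_s^{-1}$ is uniformly bounded, direct coordinate formulas for the Christoffel symbols and Riemann tensor deliver the uniform $C^1$-bound on $R_s$.

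For part~(2), I would invoke the quantitative inverse function theorem applied to $\exp^{g_s}_p : B_r(0)\subset T_pM \to M$. In $g_s$-orthonormal linear coordinates on $T_pM$, the map satisfies $\exp^{g_s}_p(0)=p$ and $D\exp^{g_s}_p(0)=\mathrm{Id}$, while the Jacobi equation together with the uniform curvature bound from part~(1) provides a uniform $C^2$-bound on $\exp^{g_s}_p$ on a ball of definite size. The QIFT then yields a radius $r_0=r_0(\mathfrak{C})>0$, independent of $s$ and $p$, on which $\exp^{g_s}_p$ is a bilipschitz embedding with constants in $[1/2,2]$; in particular it is injective, so $\iota_{g_s}(p)\geq r_0$, and one may take $\epsilon'=r_0(\mathfrak{C})$, which depends only on $C$, $\epsilon_0$, $\epsilon_1$.

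The main obstacle is the coordinate transfer in part~(1): the intrinsic $C^1$-bound on $R_{g_1}$ does not directly control the components of $g_1$ in coordinates adapted to $g_0$, so one must carefully bound $\phi_p$ and its derivatives up to order three, tracking the interaction between the two Christoffel-symbol systems. This is where both injectivity radius hypotheses $\iota_{g_0}\geq\epsilon_0$ and $\iota_{g_1}\geq\epsilon_1$ enter essentially, guaranteeing that $\phi_p$ is defined on a ball of uniform size around $0\in T_pM$. The QIFT application in part~(2) is comparatively clean once the uniform Jacobi-field bound on $D^2\exp^{g_s}_p$ is in hand, but this bound itself rests on the curvature estimate produced in part~(1).
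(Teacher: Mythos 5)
Your proposal is substantially correct and in part actually cleaner than the paper's own proof, but it differs in structure and has one concrete gap that needs patching. For part~(1), you compute the curvature of $g_s$ directly from $C^3$-bounds on the metric components in $g_0$-normal coordinates, whereas the paper organizes the estimate through the Cavenaghi--Speranca identity that writes $R_s$ explicitly in terms of $R_0$, $R_1$, the connection-difference tensor $D=\nabla^1-\nabla^0$, and the metric-comparison endomorphism $P$; the paper then only needs to bound $D$ and $P^{-1}$. Both routes hinge on the same underlying input (the Sakai Taylor expansion of $g_{ij}$ in normal coordinates and Cheeger's lemma controlling $\del y^i/\del x^j$ between the two normal coordinate systems), so the difference is organizational: the explicit formula isolates exactly which derivatives of $g_0,g_1$ appear, while your direct computation is more elementary but requires tracking the full $C^3$ transfer. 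For part~(2), you propose a one-shot, direct application of the quantitative inverse function theorem uniformly over all $(p,s)$; the paper instead runs an open-closed argument on $\{s:\iota_{g_s}>0\}\subset[0,1]$ and invokes the QIFT only inside a proof by contradiction in the ``closedness'' step, followed by a separate compactness step for uniformity. On inspection, the paper's QIFT bound in Case~(2) is already uniform in $s$ and $p$, so your direct version is a genuine streamlining of the argument; the open-closed scaffolding in the paper is not logically necessary once one has the uniform $C^2$-bound on the transition maps.

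The gap: you cannot literally ``apply the QIFT to $\exp^{g_s}_p:B_r(0)\subset T_pM\to M$,'' because the QIFT (Theorem~\ref{thm:IFT}) is a statement about $C^2$ maps between fixed Euclidean balls, and a $C^2$-bound on a map into the abstract manifold $M$ is meaningless without a choice of target chart. The paper handles this by post-composing with the $g_0$-normal chart at $p$: it considers
$$
F_{p,s}:=(\psi_p^{g_0})^{-1}\circ\psi_p^{g_s}:B^n(r_0/2)\longrightarrow B^n(r_0),\qquad
\psi_p^g:=\exp_p^g\circ I_p^g,
$$
where $I_p^g:\R^n\to T_pM$ is a $g$-orthonormal identification. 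Two consequences of this repair affect your sketch. First, the $C^2$-bound you need is a bound on $D^2F_{p,s}$ (Lemma~\ref{lem:IFT-condition} / Lemma~\ref{lem:2nd-derivative}), not merely a Jacobi-field bound on $D^2\exp^{g_s}_p$ in the abstract sense; the argument is an adaptation of Cheeger's estimate on coordinate-change maps, and it requires the lower bound $\iota_{g_0}\geq\epsilon_0$ so that $(\psi_p^{g_0})^{-1}$ is even defined on a ball of uniform size. Second, your assertion $D\exp_p^{g_s}(0)=\mathrm{Id}$ must be replaced: in these coordinates $DF_{p,s}(0)=(I_p^{g_0})^{-1}\circ I_p^{g_s}$ is not the identity but an isomorphism whose operator norm (and inverse norm) is controlled by the quasi-isometric ratio $A(g_0,g_1)$; these are precisely the constants $L,M$ entering the QIFT radii. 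Once the target chart and the uniform bounds on $L$, $M$, $K=\|D^2F_{p,s}\|_{C^0}$ are in place, your conclusion $\iota_{g_s}(p)\geq R_2$ for all $p$ and $s$ follows as you describe, since injectivity of $F_{p,s}$ on $B^n(R_2)$ implies injectivity of $\exp_p^{g_s}$ on the $g_s$-ball of radius $R_2$ in $T_pM$.
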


We would like to recall readers that estimating such a lower bound of the injectivity
radius 
is a hard work as demonstrated by \cite{nabutovsky}, and involves certain volume control
\cite{cheeger:finiteness}, \cite{CGT} in general. An upshot of the theorem is that the
statement does not involve any volume control but that it crucially relies on 
the uniform bound for the $C^2$-norm of the exponential maps:  This is one of the reasons 
why the derivative bound of the curvature, i.e., the $C^3$-tameness enters.

An outcome of these estimates, together with uniform curvature bound which will be also
proved, enables us to prove that \emph{provided $3 \leq r < \infty$},
\begin{enumerate}
\item the  convex sum $s \mapsto (1-s) g_0 + sg_1$ is contained in $\Riem_{\mathfrak{T}}(M)$,
\item it also defines a continuous path therein in strong $C^r$ topology. 
\end{enumerate}
Such a convexity of $\Riem_{\mathfrak{T}}(M)$ 
fails to hold for the  strong $C^\infty$ topology in general, or under the lower regularity 
 $1 \leq r < 3$. The latter is  mainly because of the failure of Theorem \ref{thm:convexity}
 with that low regularity.
 
\begin{theorem}[Contractibility in strong \emph{$C^k$} topology]\label{thm:contractibility-intro} 
Let $\mathfrak T$ be any quasi-isometry class 
of $C^r$-tame Riemannian metrics on $M$ with $3 \leq r < \infty$. Then 
  $\Riem_{\mathfrak{T}}(M)$ is contractible in strong $C^k$ topology for all $0 \leq k < \infty$.
\end{theorem}

In this regard, we do not know the answer to the following question, mainly because we do not
have the uniform injectivity radii lower bound for the path \emph{under $C^2$ tameness.}

\begin{ques} Is the set
$\Riem_{\mathfrak T}(M)$ strictly convex for a noncompact Riemannian manifold $M$, or 
is the path $s \mapsto (1-s) g_0 + sg_1$ continuous in
the subspace topology of $\Riem_{\mathfrak T}(M) $ in the strong $C^2$-topology 
of $ \Riem(M)$?
\end{ques}
We refer to Appendix \ref{sec:direct-limit} for some discussion on the
contractibility with respect to some $C^\infty$ topology, which we call
the direct limit $C^\infty$ topology.

\subsection{Outline of the proofs}

For the proof of uniform curvature bounds, we utilize the formula of the curvature operator
$R_{g_s}$ of $g_s$ of \emph{convex sum} $g_s: = (1-s) g_0 + s g_1$
that is obtained in a recent article by Cavenaghi and Speranca
\cite[Proposition 2.1]{cavenaghi-speranca}.

The main part of the  proof lies in that of the injectivity lower bound. 
The injectivity radii can jump under a continuous deformation even under the curvature bound
because of the appearance or disappearance of a short geodesic which reflects a nonlocal
behavior of metrics.  By now it is well-known that uniformly controlling the injectivity radii
even on compact manifolds is a difficult task in general. (We refer readers to \cite{nabutovsky} to see
why this is so.) Recall that a lower bound for the injectivity radius  can be estimated
by estimating the infimum of the radii of geodesic normal  balls over the points of $M$.
In particular we need to study the \emph{injectivity of the exponential maps} to estimate the radii 
of such balls. 

The main novelty of the present work is that the estimate of such radii  
under a continuation of metrics  can be nicely done, although the estimate is not explicit, by exploiting 
the aforementioned bounds of the curvature and its derivative, and some idea of the proof of a 
\emph{quantitative inverse function theorem}. For this purpose, we need the exponential maps
 to have uniform $C^2$-bounds.  This is where our hypothesis on $C^3$-tameness of $g_0$ and $g_1$
 enters. We would like to highlight that  the proofs of the quantitative inverse function theorem, 
 for example, from  \cite[Corollary 2.5.6]{abraham-marsden-ratiu}, \cite[Section 8]{christ:hilbert} 
 then enable us to rule out the appearance of short closed geodesics which is the main obstacle to have
 positive injectivity of radius bound, in the presence of a uniform derivative bound of the curvature. 
 
To help readers get the overall scheme of our proof, we explain how each given condition put in the statement of
Theorem \ref{thm:iota-bounds} is used in the proof:
\begin{enumerate}
\item Injectivity radius bound of $g_0$ provides an atlas of $M$ with a uniform 
size of the coordinate geodesic normal balls of $g_0$.
\item Curvature bound $\|R_{g_0}\|_{C^0}$ and $\|R_{g_1}\|_{C^0}$ provides 
a uniform bound of $\|R_{g_s}\|_{C^0}$.
\item Quasi-isometry hypothesis on $g_0$ and $g_1$ and completeness thereof imply that 
$g_s$ are also complete and quasi-isometric thereto.
\item Completeness of $g_s$ and the bound for the derivatives $DR_{g_0}$ and $DR_{g_1}$ 
first implies the existence of a common domain  and  a codomain 
of the maps $F_i$ (see  \eqref{eq:Fi}), and then imply the uniform $C^2$ bound for the maps $F_i$.
\item Combining the above all, we can apply the quantitative inversion function 
theorem (Theorem \ref{thm:IFT}) to conclude the uniform positive injectivity lower bound.
\item For the clarity of exposition, we consistently use the letter `r'  to talk about $C^r$-tameness,
while we use the letter `k'  to talk about $C^k$-topology, respectively.
\end{enumerate}

Once these basic estimates, especially the injectivity radius lower bound, are obtained,
the proof of Theorem \ref{thm:Cr-contractibility} will follow from the definition of strong $C^k$
topology of $\Riem_{\mathfrak T}(M)$, which is nothing but the subspace topology of the strong
 $C^k$ topology of $\Riem(M)$.

An interesting byproduct of this scheme of the proof is that any (finite-time)
$C^3$-continuation of a tame metric on open manifold cannot develop a cusp,
which seems to carry some interest of its own. Such a non-collapsing result 
\emph{under the uniform curvature bound} in the study of the Ricci flow plays 
an important role for the application of Ricci flow to the 3-dimensional 
topology. (See \cite{perelman:finite-time-extinction}, \cite[Chapter 8]{morgan-tian}.)
In relation to the collapsing phenomenon under the Ricci flow, a finite-time extinction
may be rephrased as a divergence in strong $C^\infty$ topology of the Riemannian 
metric under the Ricci flow in finite time.

\begin{theorem}\label{thm:noncollapsing-intro} Consider any continuous family $\{g_s\}_{s \in [0,1]}$
in strong $C^3$ topology of complete Riemannian metrics with
$$
\|R_{g_0}\|, \, \|DR_{g_0}\|_{C^0} < C, \quad \iota_{g_0} > 0.
$$
Then there is a constant $C' = C'(C, \{g_s\}), \, \epsilon' = \epsilon'(C, \epsilon, \{g_s\}) > 0$
with $\epsilon: =  \iota_{g_0}$ such that
\be
\inf_{s \in [0,1]} \iota_{g_s} > \epsilon'
\ee
\end{theorem}
Recalling that as the thin cylinder example \cite{CGT}
or the cuspidal  hyperbolic Riemann surfaces shows, the curvature bound itself
does not provide the injectivity radius lower bound for a general \emph{single individual}
metric. The upshot of this theorem is that  such a finite-time collapsing, or rather forming a cusp,
 cannot arise under a deformation of metrics that is continuous in strong $C^3$ topology.

\medskip

\noindent{\bf Acknowledgement:} 
We thank Gang Tian
and Bruce Kleiner for useful email communications on the large-scale geometry,
and the unknown referee for useful comments which helps us to improve the presentation 
of the paper.

\bigskip

\noindent{\bf Notations:}
\begin{enumerate}
\item $\vec a$; a vector in $\R^n$,
\item $v$; an element in the tangent space $T_pM$,
\item $B^n(r)$; the standard open ball of radius $r$ centered at the origin of $\R^n$,
\item $B_r^g(p)$; the geodesic normal open ball of radius $r$ centered at $p \in M$,
\item $I_p^g: \R^n \to T_pM$; the canonical isometry with respect to a given orthonormal frame
$\mathscr B$ of the inner product space $(T_pM, g_p)$.
\end{enumerate}

\part{Global Riemannian geometry}

\section{Convex sum of complete Riemannian metrics}

Assume that both $g_0$ and $g_1$ are complete and of bounded geometry
in the present section, and consider the convex sum thereof
$$
g_s = (1-s) g_0 + sg_1, \quad s \in [0,1].
$$
We will show that for any given reference tame metric $g_{\text{\rm rf}}$, the map
$$
(s,g) \mapsto (1-s) g + s g_{\text{\rm rf}}
$$
defines a contraction to the point $g_{\text{\rm rf}}$ that is continuous in strong $C^k$
topology of the set of $C^r$-tame metrics for $r \geq 3$.
(It is not a priori contractible in the usual definition of strong $C^\infty$ topology.
See Remark \ref{rem:direct-limit-topology} for relevant comments.)

The main steps for the proof of Theorem \ref{thm:convexity} 
are the estimates of the curvatures and the
injectivity radii bounds, especially the latter,
 for this convex combinations. Recently Cavenaghi and
Speranca \cite{cavenaghi-speranca}
 studied the curvature property of this convex sum
in terms of the given metrics $g_0, \, g_1$ for a different purpose.
We use their explicit curvature formula to obtain some explicit bound for the curvature
operator of $g_s$ in terms of those of $g_0$ and $g_1$.

We start with  the following proposition on the completeness.

\begin{prop} \label{prop:A-bllipschitz-convex} Suppose
$$
d_0, \, d_1: \CM \times \CM \to \R_{\geq 0}
$$
are  two complete metrics on the same topological space $\CM$.
 If $d_0, \, d_1$ are $A$-bilipschitz with $A \geq 1$ in the sense of
Definition \ref{defn:A-bilipschitz},  $d_s$ is again $A$-bilipschitz with $d_0, \, d_1$.
In particular the space of $A$-bilipschitz complete metrics on $\CM$ is convex.
\end{prop}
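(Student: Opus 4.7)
The statement is elementary and the proof will be a direct arithmetic verification, so my plan has three short steps rather than any serious obstacle. The substance is (i) checking that the convex combination of metrics is again a metric, (ii) propagating the bilipschitz bound through the convex combination, and (iii) transferring completeness across bilipschitz-equivalent metrics.

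First, I would verify that $d_s := (1-s) d_0 + s d_1$ is itself a metric on $\mathcal M$: symmetry, nonnegativity, and the triangle inequality all survive nonnegative linear combinations, and positive-definiteness holds because at every $s \in [0,1]$ at least one of the coefficients $1-s,\, s$ is positive and the corresponding $d_i$ is positive-definite.

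Second, I would read off the bilipschitz estimates by a one-line sandwich. Using the hypothesis $A^{-1} d_0 \leq d_1 \leq A d_0$, the upper bound gives
\[
d_s \;\leq\; \bigl((1-s) + sA\bigr)\, d_0 \;\leq\; A\, d_0,
\]
since $s \in [0,1]$ and $A \geq 1$, while the lower bound gives
\[
d_s \;\geq\; \bigl((1-s) + s A^{-1}\bigr)\, d_0 \;\geq\; A^{-1}\, d_0.
\]
The identical argument with the roles of $d_0$ and $d_1$ interchanged yields $A^{-1} d_1 \leq d_s \leq A d_1$, so $d_s$ is $A$-bilipschitz with each of $d_0$ and $d_1$.

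Third, I would transfer completeness. Any $d_s$-Cauchy sequence is $d_0$-Cauchy because $d_0 \leq A\, d_s$, hence converges in the complete metric $d_0$; the reverse inequality $d_s \leq A\, d_0$ then upgrades the convergence back to convergence in $d_s$. Thus $d_s$ is complete, and the class of $A$-bilipschitz complete metrics on $\mathcal M$ is closed under convex combination. Since none of these steps presents any real obstacle, the role of this proposition is purely preparatory, setting up the convex-combination framework that will later be combined with the nontrivial curvature and injectivity-radius estimates of Theorem \ref{thm:convexity}.
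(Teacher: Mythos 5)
Your proof is correct and follows essentially the same sandwich argument used in the paper, deriving $\bigl((1-s)+sA^{-1}\bigr)d_0 \leq d_s \leq \bigl((1-s)+sA\bigr)d_0$ (and its $d_1$-counterpart) and then invoking $A\geq 1$. The only difference is that you spell out the routine verifications that $d_s$ is a metric and that completeness transfers along a bilipschitz equivalence, which the paper leaves implicit.
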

\begin{proof} We first prove $A$-bilipschitz property of $d_s$.
A direct calculation shows that $d_s$ satisfies
$$
\left((1-s) +\frac{s}{A}\right) d_0(x,y) \leq d_s(x,y) \leq ((1-s) + sA) d_0(x,y)
$$
and
$$
\left(\frac{1-s}{A} +s \right) d_1(x,y) \leq d_s(x,y) \leq ((1-s)A + s) d_1(x,y).
$$
Since $A \geq 1$, these inequalities show $d_s$ is $A$-bilipschitz with $d_0, \, d_1$.

Completeness then immediately follows since the property is preserved under
the bilipschitz equivalence.
\end{proof}

To prove the convexity of the bilipschitz class of tame metrics, we
consider the convex sum of $g_s = (1-s) g_0 + s g_1$ of
$g_0, \, g_1$. We have only to prove that $g_s$ is of bounded geometry, i.e.,
\begin{enumerate}
\item there exists a constant $C> 0$ such that $\|R_{g_s}\|_{C^0} < C$
for the sectional curvature $R_{g_s}$.
\item there exists $\epsilon > 0$ such that $\iota_{g_s} > \epsilon$ for all
$s \in [0,1]$.
\end{enumerate}

For the later purpose, we also consider the following $C^0$-distance between two
metrics as a section of the bundle of symmetric positive definite quadratic forms.

\begin{defn}[Quasi-isometric ratio] Let $g_0, \, g_1$ be two Riemannian metrics of $M$. We define
\beastar
A^+_x(g_0,g_1) &: = & \sup \{ |v|_{g_1} \mid v \in T_x M, \, |v|_{g_0} = 1\}\\
A^-_x(g_0,g_1) &: = & \inf \{ |v|_{g_1} \mid v \in T_x M, \, |v|_{g_0} = 1\}
\eeastar
and
\be\label{eq:Apm}
A(g_0,g_1)  : =  \sup_{x \in M} \max\{A^+_x(g_0,g_1), \, 1/A^-_x(g_0,g_1)\}
\ee
We call $A(g_0,g_1)$ the \emph{quasi-isometric ratio} of $g_0, \,g_1$.
\end{defn}
By the definition of $A(g_0,g_1)$, $A(g_0,g_1)$ is finite if and only if $g_0$ and $g_1$ are quasi-isometric.
It also follows that $A(g_0,g_1)=A(g_1,g_0) \geq 1$, and that $A(g_0,g_1) = 1$ if and only if $g_0$ and $g_1$
are isometric. 

\begin{rem} Consider the following standard notion in the comparison geometry:
For a given pair of Riemannian metrics $g_0, \, g_1$, we define the function 
$M_{g_0,g_1}: M \to \R_+$ given by
$$
M_{g_0,g_1}(x) = \sup_{0 \neq v \in T_x M} \left|\log\left(\frac{|v|_{g_1}}{|v|_{g_0}}\right)\right|.
$$
(See Definition \ref{defn:quasiisometric-ratio}.)
It follows that $M_{g_0,g_1}$ is a continuous function on $M$. Then we have
$$
\log A(g_0,g_1) = \sup_{x \in M} M_{g_0,g_1}(x).
$$
\end{rem}

\section{Bounds for the sectional curvature of the convex sum}
\label{sec:Rs-bound}

We first consider the curvature estimate. We would like to recall readers that
the curvature \emph{quadratically} depends on the metric and its derivatives
up to second order, and that considering the associated metric on the cotangent bundle 
amounts to taking the inverse of the metric coefficients and their derivatives. For our purpose,
we will also need the bound for the derivative of the curvature and so will need 
\emph{the bound for the derivatives of the metric up to the third order}.

This is the reason why getting  the uniform bound for the curvature and its derivative
of the \emph{convex sum} $g_s$ in terms of the bounds for those of $g_0, \, g_1$
needs to be verified, especially to obtain some explicit bound
depending on the curvatures (and other tensorial expressions) and its derivatives of $g_0, \, g_1$.

For this purpose, we follow the strategy used by Cavenaghi and Speranca 
\cite{cavenaghi-speranca} in the following discussion. Consider the endomorphism
$P = P_{g_0;g_1}\in \End(TM)$ determined by
$$
g_0(PX,Y) = g_1(X,Y)
$$
which is positive definite symmetric with respect to $g_0$. We also consider
another $ D = D_{g_0,g_1} \in \End(TM)$  given  by the difference
$$
D_{g_0,g_1} = \nabla^1 - \nabla^0
$$
where $\nabla^i$ are the Levi-Civita connections of $g_i$ for $i=0, \, 1$ respectively.
(Recall that the difference of two affine connections on $TM$ defines a $(1,1)$ tensor field.)

The following formula is proved  in \cite{cavenaghi-speranca}.

\begin{prop}[Compare with Proposition 2.1 \cite{cavenaghi-speranca}]
Let $R_s$ be the curvature
operator associated to $g_s$. Then
\bea\label{eq:Rs}
R_s(X,Y, Y,X) & = & (1-s) R_0(X,Y,Y,X) + s R_1(X,Y,Y,X)  \nonumber\\
& {} & + s(1-s) g_1\left((1-s) \text{\rm Id} + s P)^{-1} D(X,Y),D(Y,X)\right) \nonumber\\
&{}& -  s(1-s) g_1\left((1-s) \text{\rm Id} + s P)^{-1} D(Y,X), D (X,Y)\right).
\eea
\end{prop}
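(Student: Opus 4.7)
The plan is to express the Levi-Civita connection $\nabla^s$ of $g_s$ as an explicit tensor perturbation of both $\nabla^0$ and $\nabla^1$, then apply the standard formula relating the curvatures of two connections differing by a tensor, and finally exploit the fact that first-order derivative contributions cancel when combined in the convex sum.

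First I would introduce the $g_0$-symmetric, positive-definite endomorphism $Q_s := (1-s)\,\text{Id} + sP$, so that $g_s(X,Y) = g_0(Q_s X, Y)$ and $Q_s$ is invertible. Applying the Koszul formula to $g_s$ and splitting by linearity into the Koszul formulas for $g_0$ and $g_1$ yields
$$
g_s(\nabla^s_X Y, Z) = (1-s)\, g_0(\nabla^0_X Y, Z) + s\, g_1(\nabla^1_X Y, Z).
$$
Rewriting the right-hand side as $g_0(Q_s \nabla^0_X Y + sPD(X,Y), Z)$ and the left-hand side as $g_0(Q_s \nabla^s_X Y, Z)$ produces the two equivalent expressions
$$
\nabla^s = \nabla^0 + s\, Q_s^{-1} P D = \nabla^1 - (1-s)\, Q_s^{-1} D.
$$
I denote these tensor perturbations by $E_0^s := sQ_s^{-1}PD$ and $E_1^s := -(1-s)Q_s^{-1}D$, and observe $E_0^s - E_1^s = D$.

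Next, the standard curvature-perturbation formula gives, for $\nabla^s = \nabla^0 + E_0^s$,
\begin{multline*}
R_s(X,Y)Z = R_0(X,Y)Z + (\nabla^0_X E_0^s)(Y,Z) - (\nabla^0_Y E_0^s)(X,Z) \\
+ E_0^s(X, E_0^s(Y,Z)) - E_0^s(Y, E_0^s(X,Z)),
\end{multline*}
together with an analogous expression using $\nabla^1$ and $E_1^s$. I would then compute
$$
R_s(X,Y,Y,X) = (1-s)\, g_0(R_s(X,Y)Y, X) + s\, g_1(R_s(X,Y)Y, X),
$$
evaluating the first summand with the $\nabla^0$-based perturbation formula and the second with the $\nabla^1$-based one. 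The first-derivative contributions should combine, via the identity $E_0^s - E_1^s = D$ together with the standard expression for $R_1 - R_0$ as the antisymmetrization of $\nabla^0 D$ plus a quadratic $[D,D]$ term, and telescope to leave precisely the linear combination $(1-s)R_0(X,Y,Y,X) + sR_1(X,Y,Y,X)$. The remaining quadratic-in-$D$ terms reorganize, using the $g_i$-symmetry of $P$ and $Q_s$ and the algebraic identity $sQ_s^{-1}P + (1-s)Q_s^{-1} = \text{Id}$, into the stated $g_1$-expression carrying the coefficient $s(1-s)Q_s^{-1}$.

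The main obstacle will be the careful bookkeeping required to verify cancellation of the four distinct first-derivative terms (two from each of the two perturbation formulas) against the derivative portion of $R_1 - R_0$. This rests systematically on the metric compatibilities $\nabla^i g_i = 0$, the torsion-free symmetry $D(X,Y) = D(Y,X)$, and the $g_i$-symmetry of $P$ and $Q_s$. Once these cancellations are confirmed, the remaining algebraic reorganization into the form stated in the proposition is routine if somewhat tedious, and can be carried out either intrinsically or by choosing at each point a $g_0$-orthonormal frame diagonalizing $P$.
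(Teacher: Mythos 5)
The paper does not actually supply a proof of this proposition: it states \emph{``The following formula is proved by Cavenaghi and Speranca in [CS]''} and then uses the formula. So there is no in-paper argument against which to compare your proposal. Your preliminary algebra is correct and worth noting: $Q_s := (1-s)\mathrm{Id} + sP$ satisfies $g_s(X,Y) = g_0(Q_sX,Y)$, the Koszul formula does split linearly to give $g_s(\nabla^s_XY,Z) = (1-s)g_0(\nabla^0_XY,Z) + s\,g_1(\nabla^1_XY,Z)$, and the two decompositions $\nabla^s = \nabla^0 + sQ_s^{-1}PD = \nabla^1 - (1-s)Q_s^{-1}D$ follow cleanly, with $E_0^s - E_1^s = D$. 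This is a legitimate and economical starting point.

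The problem is with the target, and with the two claims that do all the work. First, the formula as displayed in the paper cannot be correct: $D = \nabla^1 - \nabla^0$ is the difference of two \emph{torsion-free} connections, hence $D(X,Y)=D(Y,X)$, so the two $s(1-s)$ terms in \eqref{eq:Rs} are identical and cancel. The displayed identity therefore reduces to $R_s(X,Y,Y,X) = (1-s)R_0(X,Y,Y,X) + sR_1(X,Y,Y,X)$, which is false already for conformal rescalings of a flat metric in dimension two. (The paper's preceding text, which calls $D$ ``a $(1,1)$ tensor field'' in $\End(TM)$ when it is in fact a symmetric $(1,2)$-tensor, is another sign that this statement is a misquotation of [CS]; the true formula presumably carries $D(X,X)$, $D(Y,Y)$, $D(X,Y)$ in a Gauss-equation pattern rather than $D(X,Y)$, $D(Y,X)$.) Second, your outline asserts but does not verify the two crucial steps: that the four first-derivative terms $(\nabla^0 E_0^s)$ and $(\nabla^1 E_1^s)$ telescope against $R_1 - R_0$, and that the remaining quadratic terms ``reorganize into the stated $g_1$-expression.'' The first assertion is highly nontrivial (the terms involve covariant derivatives of $Q_s^{-1}$ and $P$, not just of $D$), and the second cannot possibly hold, since the stated $g_1$-expression is identically zero. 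If you want a route that has a chance of producing a formula of this shape, consider instead realizing $g_s$ as the pullback of the product metric $(1-s)\pi_1^*g_0 \oplus s\,\pi_2^*g_1$ on $M\times M$ along the diagonal embedding, and applying the Gauss equation: the ambient curvature of the diagonal plane is precisely $(1-s)R_0 + sR_1$, and the correction term is quadratic in the second fundamental form, which can be expressed through $D$. The perturbation-of-connections route you outline leads to nested expressions like $Q_s^{-1}PD(\,\cdot\,,Q_s^{-1}PD(\,\cdot\,,\,\cdot\,))$ that do not visibly collapse to a single $g_1(Q_s^{-1}D,D)$ pairing.
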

Here we mention that the endomorphism $(1-s) \text{\rm Id} + s P$ is
invertible since $P$ is positive definite (with respect to $g_0$).
We first state the following easy lemma.
\begin{lem} Let $\lambda_P > 0$ be the smallest eigenvalue of $P$. Then we have
$$
\left\|((1-s)\text{\rm Id} + s P)^{-1}\right\|_{C^0} \leq \frac1{1-s  + s \lambda_P}
\leq \frac1{\min\{1, \lambda_P\}}.
$$
\end{lem}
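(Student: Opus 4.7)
The plan is to reduce the bound to a pointwise spectral calculation for the $g_0$-self-adjoint operator $(1-s)\text{\rm Id} + sP$ and then take a supremum over $M$. First I would record the key structural fact: because $g_1$ is a symmetric positive-definite $(0,2)$-tensor and $P$ is defined by $g_0(PX,Y) = g_1(X,Y)$, the endomorphism $P_x \in \End(T_xM)$ is self-adjoint with respect to the inner product $g_0|_x$ and has strictly positive eigenvalues at every $x \in M$. Consequently $(1-s)\text{\rm Id} + sP_x$ is also $g_0$-self-adjoint and positive definite for every $s \in [0,1]$, so that its inverse is well-defined and has operator norm equal to the reciprocal of its smallest eigenvalue (this is the place where self-adjointness is used — for a general invertible endomorphism the operator norm of the inverse is not the reciprocal of the smallest eigenvalue modulus, but the reciprocal of the smallest \emph{singular value}).

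Next I would diagonalize pointwise. Let $0 < \lambda_1(x) \leq \cdots \leq \lambda_n(x)$ denote the eigenvalues of $P_x$. Since $(1-s)\text{\rm Id} + sP$ commutes with $P$ and shares its eigenbasis, its eigenvalues are $(1-s) + s\lambda_i(x)$, so
\[
\bigl\|((1-s)\text{\rm Id} + sP_x)^{-1}\bigr\|_{\op,g_0} \;=\; \frac{1}{(1-s) + s\lambda_1(x)}.
\]
Taking the supremum over $x \in M$ and using $\lambda_P = \inf_{x \in M} \lambda_1(x) > 0$ (which is part of the hypothesis that $P$ has $\lambda_P$ as its smallest eigenvalue on $M$) yields the first claimed inequality.

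For the second inequality I would simply observe that $s \mapsto (1-s) + s\lambda_P$ is an affine (hence convex) combination of $1$ and $\lambda_P$, so its minimum over $s \in [0,1]$ is $\min\{1,\lambda_P\}$, and therefore
\[
\frac{1}{(1-s) + s\lambda_P} \;\leq\; \frac{1}{\min\{1,\lambda_P\}}
\]
uniformly in $s$. Combining the two inequalities gives the asserted bound on $\|((1-s)\text{\rm Id} + sP)^{-1}\|_{C^0}$.

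There is essentially no hard step here; the one point that must not be glossed over is the justification that the operator norm of $((1-s)\text{\rm Id} + sP_x)^{-1}$ with respect to $g_0$ really equals the reciprocal of its smallest eigenvalue. This requires $g_0$-self-adjointness (equivalently, the fact that $g_1$ is symmetric as a bilinear form), so I would state it explicitly rather than citing it silently. All bounds are intrinsic to $g_0$ and $g_1$ and involve no derivatives, which is consistent with the curvature formula \eqref{eq:Rs} where this lemma will be applied to control the last two terms.
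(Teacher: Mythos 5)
Your proof is correct, and since the paper leaves this lemma unproved (calling it ``easy''), your pointwise spectral argument --- using $g_0$-self-adjointness of $P$ to identify $\|((1-s)\mathrm{Id}+sP)^{-1}\|$ with the reciprocal of the smallest eigenvalue $(1-s)+s\lambda_1(x)$, then taking the supremum over $M$ and bounding the affine combination below by $\min\{1,\lambda_P\}$ --- is exactly the intended route. Your explicit remark that self-adjointness (not mere invertibility) is what equates the inverse's operator norm with the reciprocal of the smallest eigenvalue is a worthwhile addition rather than a deviation.
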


\emph{From now on, we will take $g_{\text{\rm rf}}$ as the given reference metric 
which is also tame, and measure all
relevant norms in terms of this metric $g_{\text{\rm rf}}$.}

Now suppose the curvature bounds
\be\label{eq:Ri-bounds}
\|R_{g_i}\|_{C^0} \leq C_i < \infty
\ee
for some constants $C_i > 0$ for $i=0, \, 1$.  Then
we can also fix the radii $r_i > 0$,  $i=0, \, 1$ such that the geodesic normal balls 
$B^{g_i}_{r_i}(p)$  are strongly convex at all points $p \in M$ for both metrics $g_i$ 
with $i = 0, \, 1$.

Since $g_0, \, g_1$ are bilipschitz and of bounded
curvature, we also have the following estimate for the coordinate change matrices
whose proof is essentially the same as that of \cite[Lemma 3.4]{cheeger:finiteness} and so
omitted.
\begin{lem}[Compare with Lemma 3.4 \cite{cheeger:finiteness}]\label{lem:cheeger} Let $g_0, \, g_1$ be as above.
Let $\{x^i\}$ and $\{y^i\}$ be geodesic normal coordinates of
$g_0$ and $g_1$ on $B_0 : = B_{r_0}^{g_0}(p)$ and $B_1: = B_{r_1}^{g_1}(p)$ respectively.
Given $S > 0$ with $\|R_{g_0}\|_{C^0}, \, \|R_{g_1}\| _{C^0} < S$, there exists a
constant $C = C(S)$ such that if $r_k < \frac{\pi}{2 \sqrt{S}}$ for $k=0,\,1$, then
\be\label{eq:coordinate-change}
\left|\frac{\del y^i}{\del x^j}(x)\right| < C(S)
\ee
for all $x \in B_0 \cap B_1$.
\end{lem}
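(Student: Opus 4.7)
My plan is to reduce the transition map $y \circ x^{-1}$ to a composition of two exponential maps sandwiched between linear isometries and then estimate each factor separately. Writing $I_k := I_p^{g_k} : \R^n \to T_p M$ for the canonical linear isometries associated to chosen $g_k$-orthonormal frames $\mathscr B_k$ at $p$, the coordinate change on the overlap $B_0 \cap B_1$ reads
$$
y \circ x^{-1} = I_1^{-1} \circ \bigl(\exp_p^{g_1}\bigr)^{-1} \circ \exp_p^{g_0} \circ \, I_0 .
$$
Differentiating and applying the chain rule reduces the task to uniformly bounding three pieces: (i) the linear isomorphism $I_1^{-1} \circ I_0 : \R^n \to \R^n$, (ii) the differential of $\exp_p^{g_0}$, and (iii) the differential of the inverse of $\exp_p^{g_1}$.

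For (i), the operator norm of $I_1^{-1} \circ I_0$ is exactly the distortion of the identity map $(T_pM, g_0) \to (T_pM, g_1)$ expressed in the chosen orthonormal frames. Under the standing bilipschitz hypothesis on the pair $(g_0, g_1)$, this is controlled pointwise by the quasi-isometric ratio and so uniformly by the global constant $A := A(g_0,g_1) < \infty$, independent of $p$.

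For (ii) and (iii) I would invoke the Rauch comparison theorem. Since $|K_{g_k}| \leq S$, any Jacobi field $J$ along a unit-speed $g_k$-geodesic with $J(0) = 0$, $|J'(0)| = 1$, $J \perp \dot\gamma$ satisfies
$$
\frac{\sin(\sqrt{S}\,t)}{\sqrt{S}} \;\leq\; |J(t)|_{g_k} \;\leq\; \frac{\sinh(\sqrt{S}\,t)}{\sqrt{S}}
$$
for $0 \leq t < \pi/\sqrt{S}$. Translated into differential bounds on the exponential maps this yields, for $v$ in the relevant tangent balls,
$$
\bigl\|d\bigl(\exp_p^{g_0}\bigr)_v\bigr\|_{\op} \leq \frac{\sinh(\sqrt{S}\,|v|_{g_0})}{\sqrt{S}\,|v|_{g_0}}, \qquad \bigl\|d\bigl(\exp_p^{g_1}\bigr)_v^{-1}\bigr\|_{\op} \leq \frac{\sqrt{S}\,|v|_{g_1}}{\sin(\sqrt{S}\,|v|_{g_1})} .
$$
The latter is finite precisely because the hypothesis $r_1 < \pi/(2\sqrt{S})$ keeps $\sqrt{S}\,|v|_{g_1}$ strictly below $\pi/2$, so that $\sin$ is bounded away from zero. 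This is the one step where the radius restriction is genuinely essential: any larger radius would allow the lower Rauch bound to collapse and $d(\exp_p^{g_1})^{-1}$ would blow up, which is the main technical obstacle the hypothesis is designed to prevent.

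Combining the three estimates via the chain rule produces a bound of the shape
$$
\left|\frac{\partial y^i}{\partial x^j}(x)\right| \leq A^2 \cdot \frac{\sinh(\sqrt{S}\,r_0)}{\sqrt{S}\,r_0} \cdot \frac{\sqrt{S}\,r_1}{\sin(\sqrt{S}\,r_1)} =: C(S)
$$
valid uniformly on $B_0 \cap B_1$, which is the conclusion. Apart from the lower Rauch estimate on $\exp_p^{g_1}$, every ingredient is either an explicit upper Rauch bound or a linear comparison controlled by the fixed quasi-isometric ratio, so no further genuinely geometric input is required.
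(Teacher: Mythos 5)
Your strategy is sound and is in essence the same route the paper intends (it omits the proof and points to Cheeger's Lemma 3.4, which is also a Rauch comparison argument). The Jacobi field bounds and the role of the radius restriction $r_k < \pi/(2\sqrt{S})$ are correctly identified, and you are right that the constant actually depends on the quasi-isometric ratio $A(g_0,g_1)$ in addition to $S$ -- the paper's notation $C(S)$ suppresses this, presumably because $A$ is fixed by the standing hypotheses of the section.

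There is, however, a bookkeeping slip in the chain-rule decomposition that you should fix before this is airtight. You claim the estimate reduces to bounding the ``three pieces'' $I_1^{-1}\circ I_0$, $d\exp_p^{g_0}$, and $d(\exp_p^{g_1})^{-1}$, but $I_1^{-1}\circ I_0$ does not appear as a factor in the composition
$$
y\circ x^{-1} = I_1^{-1}\circ (\exp_p^{g_1})^{-1}\circ \exp_p^{g_0}\circ I_0 ,
$$
since $I_1^{-1}$ sits at the extreme left and $I_0$ at the extreme right, separated by the two exponential maps. The correct factorization at a point $\vec a$ with $v = I_0\vec a$ and $q = \exp_p^{g_0}(v)$ is
$$
d(y\circ x^{-1})_{\vec a}
= I_1^{-1}\circ d(\exp_p^{g_1})^{-1}_{q}\circ \iota_q\circ d(\exp_p^{g_0})_{v}\circ I_0 ,
$$
where $\iota_q:(T_qM,g_0)\to(T_qM,g_1)$ is the identity map regarded as a map between the two metric structures on $T_qM$. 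It is $\|\iota_q\|$, controlled by the quasi-isometric ratio at the intermediate point $q$ rather than at the base point $p$, that contributes the $A$-dependent factor; $I_0$ and $I_1^{-1}$ are honest isometries with operator norm $1$ and contribute nothing. Since the quasi-isometry constant is global, your numerical bound is unaffected, but as written the chain rule is mis-stated, and the exponent on $A$ in your final expression (you write $A^2$) does not match the single distortion factor that actually enters; it should be a single power of the quasi-isometric ratio (with the appropriate normalization, $\sqrt{A}$ in the paper's convention $\frac1A g_1\le g_0\le A g_1$).
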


An immediate corollary of the curvature bound and this lemma is the following
bound for the tensor $D$.
This is a consequence of the general principle that \emph{curvature bounds imply
bounds for the second derivatives of the metric tensor.}

\begin{prop} \label{prop:|D|} There
exists some constant $C' > 0$ depending only on $g_0, \, g_1$ (and $C_0, \, C_1$)
such that
\be\label{eq:Rs-bound}
\|D\|_{C^0}  \leq C'
\ee
\end{prop}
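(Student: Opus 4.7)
The plan is to compute $|D|$ pointwise at an arbitrary $p\in M$ by working in $g_0$-geodesic normal coordinates $\{x^i\}$ on $B_{r_0}^{g_0}(p)$, whose size $r_0$ is controlled uniformly in $p$ by $\|R_{g_0}\|_{C^0}$ and the injectivity radius lower bound. Since the $g_0$-Christoffel symbols $\Gamma^k_{ij}(g_0)$ vanish at $x=0$, one has $D^k_{ij}(p)=\Gamma^k_{ij}(g_1)(0)$ in this chart, so the task reduces to bounding the $g_1$-Christoffel symbols in the $g_0$-normal chart at the origin, uniformly over $p$.

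First, applying the Koszul formula for $\nabla^1$ to the coordinate frame (whose commutators vanish) yields the algebraic identity
\[
D^l_{ij}(p)\;=\;\tfrac12\,(g_1)^{lk}(0)\bigl[\partial_i(g_1)_{jk}+\partial_j(g_1)_{ik}-\partial_k(g_1)_{ij}\bigr](0).
\]
The inverse $(g_1)^{lk}(0)$ is pointwise bounded above and below by the quasi-isometry of $g_0$ and $g_1$, so the matter is reduced to controlling the first derivatives of $(g_1)_{ij}$ expressed in $g_0$-normal coordinates at the origin.

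Next, I would introduce the transition map $\psi:=\exp_p^{g_1,-1}\circ\exp_p^{g_0}$ from a neighborhood of $0$ in the $g_0$-chart to a neighborhood of $0$ in the $g_1$-chart, and use the tensorial transformation rule $(g_1)_{ij}(x)=\widetilde{(g_1)}_{ab}(\psi(x))\,\partial_i\psi^a(x)\,\partial_j\psi^b(x)$. Differentiating and evaluating at $x=0$, the term involving $\partial_c\widetilde{(g_1)}_{ab}$ drops out because the derivative of the metric in its own normal coordinates vanishes at the center, leaving an expression that is linear in the Hessian $\partial^2\psi/\partial x^2(0)$ and in the Jacobian $\partial\psi/\partial x(0)$. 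The Jacobian is uniformly controlled by Lemma \ref{lem:cheeger}.

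The main obstacle, and where the tameness of the metrics really enters, is the uniform pointwise bound on the Hessian of the transition map $\psi$ at the origin. This is a second-order analog of Lemma \ref{lem:cheeger} and amounts to a bound on the Hessian of the inverse exponential map $\exp_p^{g_1,-1}$ at $p$, which one extracts from variation-of-Jacobi-field estimates along short $g_1$-geodesics, with constants depending on the curvature bounds of $g_0$ and $g_1$ together with the bounded-geometry data already used. This is precisely the Cheeger-Gromov type argument encoded in the slogan \emph{curvature bounds imply second-derivative bounds on the metric tensor}. Assembling the three ingredients, namely the quasi-isometry bound on $(g_1)^{lk}$, the Jacobi-field bound on the first-order change-of-coordinates, and the Hessian bound on $\psi$, yields $|D^l_{ij}(p)|\le C'$ with $C'$ independent of $p$, and hence $\|D\|_{C^0}\le C'$ as claimed.
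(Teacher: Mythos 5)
Your preliminary reductions are fine --- working in the $g_0$-normal chart at $p$ so that $D^k_{ij}(p)=\Gamma^k_{ij}(g_1)(0)$, the Koszul identity, the quasi-isometry bound on $(g_1)^{lk}(0)$, and Lemma \ref{lem:cheeger} for the Jacobian --- but the step you yourself flag as the main obstacle is a genuine gap, for two reasons. First, it is not a reduction at all: by the transformation law for Christoffel symbols under $y=\psi(x)$, evaluated at the origin where the $g_1$-Christoffels vanish in the $y$-chart and the $g_0$-Christoffels vanish in the $x$-chart, one has
\[
\frac{\partial^2\psi^c}{\partial x^i\partial x^j}(0)\;=\;\frac{\partial \psi^c}{\partial x^k}(0)\,\Gamma^k_{ij}(g_1)(0)\;=\;\frac{\partial \psi^c}{\partial x^k}(0)\,D^k_{ij}(p),
\]
so, the Jacobian being bounded and boundedly invertible, a uniform bound on the Hessian of $\psi$ at the origin is \emph{equivalent} to \eqref{eq:Rs-bound}; you have restated the proposition, not reduced it. Second, the justification you propose cannot deliver that bound. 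Rauch/Jacobi comparison with $\|R\|_{C^0}$ controls first derivatives of exponential maps (that is the content of Lemma \ref{lem:cheeger}); second derivatives of normal-chart transition maps already require control of $DR$, which is exactly why the paper's Lemma \ref{lem:2nd-derivative} in Appendix \ref{sec:IFT-condition} assumes $\|DR_i\|_{C^0}<S_1$ and why $C^3$-tameness runs through Part 1. Worse, for two \emph{different} metrics the quantity you need is the mixed term $\nabla^{g_0}g_1$ at $p$, and this is not controlled by the curvatures of $g_0$ and $g_1$ (nor their derivatives) together with quasi-isometry and injectivity-radius data: take $g_0$ Euclidean on $\R^2$ and $g_1=\phi^*g_0$ for a diffeomorphism $\phi$ whose first derivatives are pinched near the identity but whose Hessian is unbounded; both metrics are flat, complete, with infinite injectivity radius and quasi-isometric, yet $\partial^2\psi(0)=d\phi_p^{-1}\,d^2\phi_p$ is unbounded in $p$. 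So the slogan ``curvature bounds imply second-derivative bounds on the metric tensor'' is valid for each metric in its own normal coordinates, not for one metric written in the other's normal chart, which is what your argument needs.

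For comparison, the paper does not attempt your Hessian bound: it estimates $\Gamma(g_0)$ in $g_0$-normal coordinates and $\Gamma(g_1)$ in $g_1$-normal coordinates separately, via the curvature-controlled expansion \eqref{eq:gij} of each metric in its own chart, and then transfers both estimates to the chart of the fixed reference metric $g_{\text{\rm rf}}$ by Lemma \ref{lem:cheeger} before forming $D=\nabla^1-\nabla^0$. Your evaluation at the center, where $D(p)$ coincides with $\Gamma(g_1)(0)$ in the $g_0$-chart, is an attractive simplification, but the missing ingredient --- uniform control of the $g_1$-Christoffels, equivalently of $\partial^2\psi(0)$, in a chart adapted to $g_0$ --- is precisely the nontrivial content of the proposition, and your proposal does not supply it.
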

\begin{proof} The bounds for the curvature and the injectivity radius lower bounds imply that
we are given a pair of constants $r_0, \, r_1$ given in Lemma \ref{lem:cheeger}
such that we can choose a normal coordinate system on a convex geodesic balls  of the uniform size, 
say $\epsilon> 0$, at every point $p \in M$ simultaneously for $g_0, \, g_1$ so that
$$
B_{\epsilon}^{g_{\text{\rm rf}}}(p) \subset B_{r_0}^{g_0}(p) \cap B_{r_1}^{g_1}(p)
$$
and
$$
B_{\epsilon}^{g_i}(p) \subset B_{r_0}^{g_0}(p) \cap B_{r_1}^{g_1}(p).
$$
We fix such a constant $\epsilon > 0$.

We first consider $g_{\text {\rm rf}}$ and
denote the geodesic normal coordinates on $B_{\epsilon}^{g_{\text{\rm rf}}}(p)$ by
$(x^1, \cdots, x^n)$ at $p \in M$ of $g_0$
By definition of normal coordinates, the associated normal coordinates $(x^1,\cdots, x^n)$
satisfies
\be\label{eq:normality}
g_{ij}(p) = \delta_{ij}, \quad \Gamma^k_{ij}(p) = 0.
\ee
Then we have the following
Taylor expansion at $p$ of the metric tensor coefficients $g_{ij}$
\bea\label{eq:gij}
g_{ij}(x) & = & \delta_{ij} + \frac13 R_{ik\ell j}(p)x^kx^\ell + \frac16 R_{ik\ell j;s}(p) x^kx^\ell x^s
\nonumber\\
&& + \left(\frac1{20} R_{ik\ell j;st}(p) + \frac2{45} \sum_m R_{ik\ell m}(p)R_{jstm}(p)\right)
x^kx^\ell x^s x^t \nonumber\\
&& + O(r^5)
\eea
where $r$ is the distance from $p$. (See \cite[Equation (1.5)]{morgan-tian}
where the formula is attributed to Sakai \cite{sakai}.)

We recall the formula for the Christoffel symbol
\be\label{eq:Gammakij}
\Gamma^k_{ij} = \frac12 g^{k\ell}\left(\frac{\del g_{\ell j}}{\del x^i} + \frac{\del g_{i\ell}}{\del x^j}
- \frac{\del g_{ij}}{\del x^\ell}\right).
\ee
By a direct calculation of derivatives of the right hand side of \eqref{eq:gij} at the given point $p$
of $g_{ij}$ using the expression \eqref{eq:gij}
applied to $g_0$ and $g_1$ respectively;
$$
\frac{\del g_{ij}}{\del x^m}(x) = \frac13 \left(R_{imk j}(p)+ R_{ikmj}(p)\right) x^k + O(r^2)
$$
which implies
\be\label{eq:Gamma-norming0}
\|\Gamma^k_{ij}(g_0) \|_{C^0;g_0} \leq C_0 \|R_{g_0}\|_{C^0;g_0}
\ee
on $B_{r_0}^{g_0}(p)$. 
By the same token, we consider $g_1$ now and similarly have
\be\label{eq:Gamma-norming1}
\|\Gamma^k_{ij}(g_1) \|_{C^0;g_1} \leq C_1 \|R_{g_1}\|_{C^0; g_1}
\ee
on $B_{r_1}^{g_1}(p)$. Here the norms are measured by $g_0$ and $g_1$ respectively.

An immediate consequence of Lemma \ref{lem:cheeger} is that the bounds \eqref{eq:Gamma-norming0},
\eqref{eq:Gamma-norming1} can be converted to those in terms of the 
reference metric $g_{\text{\rm rf}}$,
$$
\|\Gamma^k_{ij}(g_k) \|_{C^0} \leq C_k' \|R_{g_k}\|_{C^0}, \quad k = 0, \, 1
$$
by adjusting $C_0, \, C_1$ slightly. We set $C' = \max\{C_0', C_1'\}$ where we have
$$
C' = C'(C_0, C_1, r_0, r_1).
$$
Then writing and substituting these estimates into the coordinate expression of
$D = \nabla^1 - \nabla^2$, we can find  the bound $C_k'$ depending only
on 
\begin{itemize}
\item  the curvature expression $R_{ik\ell j}^g(p)$ for $g = g_0, \, g_1$
 in coordinate functions $x^i$'s respectively, 
 \item the metric coefficients of $g_0, \, g_1$ and their inverses.
 \end{itemize}
These are valid on the geodesic ball 
$$
B_\epsilon^{g_{\text{\rm rf}}}(p) \subset B_{r_0}^{g_0}(p) 
\cap B_{r_1}^{g_1}(p).
$$
\end{proof}

Now we prove the curvature bound of $g_s$ in terms of that of $g_0$ and $g_1$
and the injectivity radius thereof.
\begin{theorem}\label{thm:Rs-bounds} Let $C_0,\, C_1$ and $C' = C'(C_0, C_1, r_0, r_1)$ be
as above. Then we have
\be\label{eq:Rs-bounds}
\|R_s\|_{C^0} \leq (1-s) C_0 + s C_1 + \frac{2 s(1-s)}{\min\{1, \lambda_P\} }(C')^2.
\ee
\end{theorem}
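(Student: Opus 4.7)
The plan is to estimate $\|R_s\|_{C^0}$ directly from the explicit formula \eqref{eq:Rs} of Cavenaghi--Speranca by applying the triangle inequality term by term and invoking the bounds already established in this section. Since the formula expresses the curvature of $g_s$ as the convex combination $(1-s)R_0(X,Y,Y,X)+sR_1(X,Y,Y,X)$ plus an $s(1-s)$-weighted correction built out of $D = \nabla^1-\nabla^0$ and the inverse of $(1-s)\text{\rm Id}+sP$, each piece admits a bound of the desired form using an ingredient already at hand.

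For the leading two terms, one applies the hypothesis \eqref{eq:Ri-bounds}, interpreting $\|R_s\|_{C^0}$ as the pointwise supremum of the sectional curvature function of $g_s$: on a $g_s$-orthonormal plane the curvature operator $R_s(X,Y,Y,X)$ reduces, after the convex-combination identity \eqref{eq:Rs}, to the convex combination of $R_i(X,Y,Y,X)$ plus the cross terms, and the first two contribute at most $(1-s)C_0 + sC_1$. Since $g_0,\, g_1, \, g_s$ are mutually bilipschitz with constants controlled by Proposition \ref{prop:A-bllipschitz-convex}, the passage between $g_s$-orthonormal and $g_i$-orthonormal frames introduces only factors which are already subsumed into the constant $C'$ coming from Proposition \ref{prop:|D|}.

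For the two cross terms in \eqref{eq:Rs}, I would chain the following three estimates in order: first, the Cauchy--Schwarz inequality for $g_1$, namely $|g_1(u,v)| \leq |u|_{g_1}|v|_{g_1}$; second, the operator-norm bound $\bigl\|\bigl((1-s)\text{\rm Id}+sP\bigr)^{-1}\bigr\|_{C^0} \leq 1/\min\{1,\lambda_P\}$ from the lemma following \eqref{eq:Rs}; and third, the $C^0$-bound $\|D\|_{C^0} \leq C'$ furnished by Proposition \ref{prop:|D|}. Taken together, each of the two symmetric cross terms is bounded by $s(1-s)(C')^2/\min\{1,\lambda_P\}$, and summing the two produces the factor of $2$ in the final expression. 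Adding this to the convex combination $(1-s)C_0 + sC_1$ from the leading terms gives precisely \eqref{eq:Rs-bounds}.

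The only genuine obstacle, rather than true difficulty, is the bookkeeping of which metric is being used to measure each tensor norm: the hypothesis \eqref{eq:Ri-bounds} is stated intrinsically for $g_i$, the operator-norm bound on $((1-s)\text{\rm Id}+sP)^{-1}$ is with respect to $g_0$, and Proposition \ref{prop:|D|} was proved in the fixed reference frame attached to $g_{\text{\rm rf}}$. The uniform bilipschitz equivalence between these three metrics, together with the control on coordinate changes furnished by Lemma \ref{lem:cheeger}, absorbs any mismatch of norms into the constant $C' = C'(C_0,C_1,r_0,r_1)$, so the stated inequality follows without any further enlargement of the constants.
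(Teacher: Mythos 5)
Your proposal is correct and follows essentially the same route as the paper: the paper's proof is a one-line appeal to the formula \eqref{eq:Rs} combined with the lemma bounding $\bigl\|((1-s)\mathrm{Id}+sP)^{-1}\bigr\|_{C^0}$ by $1/\min\{1,\lambda_P\}$ and the bound $\|D\|_{C^0}\leq C'$ of Proposition \ref{prop:|D|}, which is exactly the triangle-inequality/Cauchy--Schwarz chain you spell out. Your explicit attention to which metric measures each norm (absorbed via the reference metric $g_{\text{\rm rf}}$ and Lemma \ref{lem:cheeger} into $C'$) is in fact more careful than the paper's own terse argument.
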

\begin{proof} The proof immediately follows from expressing the formula
\eqref{eq:Rs} after writing the formula in coordinates on normal neighborhoods of the uniform size 
$\epsilon = \epsilon(r_0,r_1) > 0$
and applying the estimates obtained from the above lemmata.
\end{proof}

By the same kind of reasoning after taking the derivatives of the identity
\eqref{eq:Rs}, we also derive the following uniform $C^1$ bound.

\begin{theorem}\label{thm:DRs-bounds}
Suppose $\max\{\|DR_{g_0}\|_{C^0}, \|DR_{g_1}\|_{C^0}\} \leq C_2$ for some 
$C_2 > 0$ in addition. Then there exists some $C'' = C''(C_0,C_1,r_0,r_1,C_2) > 0$ 
such that
$$
\max_{s \in [0,1]} \|DR_s\|_{C^0} \leq C''.
$$
\end{theorem}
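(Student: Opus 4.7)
The plan is to follow the same strategy as Theorem \ref{thm:Rs-bounds}: differentiate the pointwise identity \eqref{eq:Rs} once and estimate the resulting terms one at a time on a common normal chart of uniform size $\epsilon = \epsilon(r_0,r_1) > 0$ built from a tame reference metric $g_{\text{rf}}$. The derivative of the first line of \eqref{eq:Rs} contributes $(1-s)\nabla R_0 + s\nabla R_1$, whose $C^0$-norm is bounded by $C_2$ by hypothesis. In the remaining two lines, the Leibniz rule produces sums of products of the factors $g_1$, $D$, and $((1-s)\mathrm{Id}+sP)^{-1}$, in which exactly one factor carries a derivative; the un-differentiated factors are controlled exactly as in the proof of Theorem \ref{thm:Rs-bounds} via Proposition \ref{prop:|D|} and the bound $\|((1-s)\mathrm{Id}+sP)^{-1}\|_{C^0}\leq 1/\min\{1,\lambda_P\}$.

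The key new input is a uniform $C^0$-bound on $\nabla D$ and $\nabla P$ depending only on $C_0, C_1, r_0, r_1$, and $C_2$. On a normal chart, $D$ and $P$ are expressed through the Christoffel symbols $\Gamma^k_{ij}(g_a)$ and the matrix coefficients $g_0^{ij}(g_1)_{jk}$, so their coordinate derivatives reduce to second coordinate derivatives of $g_0$ and $g_1$. Extending the Sakai-type Taylor expansion \eqref{eq:gij} by one order shows that these second derivatives are controlled pointwise by $R_{g_a}$ and $DR_{g_a}$, hence by $C_0, C_1, C_2$. A one-higher-order version of Lemma \ref{lem:cheeger}, yielding a uniform bound of the form $|\partial^2 y^i/\partial x^j\partial x^k|\leq C(C_0,C_1,C_2)$ on the normal-coordinate changes between charts of $g_0$, $g_1$ and $g_{\text{rf}}$, transfers these estimates to the common $g_{\text{rf}}$-chart, producing $\|\nabla D\|_{C^0},\|\nabla P\|_{C^0}\leq C'''(C_0,C_1,r_0,r_1,C_2)$.

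Finally, for the derivative of the inverse operator I would use the matrix identity
\[
\nabla\bigl((1-s)\mathrm{Id}+sP\bigr)^{-1} = -s\bigl((1-s)\mathrm{Id}+sP\bigr)^{-1}(\nabla P)\bigl((1-s)\mathrm{Id}+sP\bigr)^{-1},
\]
whose $C^0$-norm is thereby bounded by $\|\nabla P\|_{C^0}/\min\{1,\lambda_P\}^2$. Substituting all of these ingredients into the Leibniz-expanded derivative of \eqref{eq:Rs} yields the desired bound $\|DR_s\|_{C^0}\leq C''(C_0,C_1,r_0,r_1,C_2)$. The main technical obstacle is the one-higher-order extensions of Lemma \ref{lem:cheeger} and of the Taylor expansion \eqref{eq:gij}; both are carried out by the same Sakai/Cheeger-style arguments as before, with one additional derivative of the metric coefficients tracked throughout, and the hypothesis $\|DR_{g_a}\|_{C^0}\leq C_2$ is precisely what is needed to close the estimates at this one-higher order.
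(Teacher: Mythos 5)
Your proposal is correct and follows essentially the same route the paper (implicitly) takes: the paper states Theorem~\ref{thm:DRs-bounds} with only the remark that it follows ``by the same kind of reasoning after taking the derivatives of the identity~\eqref{eq:Rs},'' and you have simply made that reasoning explicit. In particular you correctly identify the three ingredients needed to close the estimate at one order higher: (i) the hypothesis $\|DR_{g_a}\|_{C^0}\le C_2$ to control the extra differentiated curvature and, via the next order of the Sakai expansion~\eqref{eq:gij}, the second coordinate derivatives of $g_0, g_1$; (ii) a one-order-higher analogue of Lemma~\ref{lem:cheeger} giving a uniform bound on $|\del^2 y^i/\del x^j\del x^k|$ — this is in fact exactly Lemma~\ref{lem:2nd-derivative} in Appendix~\ref{sec:IFT-condition}, so the auxiliary lemma you posit is already available in the paper; and (iii) the standard formula for the derivative of a matrix inverse. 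One small imprecision worth flagging: $DR_s$ should mean $\nabla^{g_s}R_s$, so differentiating $(1-s)R_0+sR_1$ with $\nabla^{g_s}$ does not literally give $(1-s)\nabla^{g_0}R_0+s\nabla^{g_1}R_1$; the discrepancy is a contraction of $D_{g_a,g_s}$ against $R_a$, which is bounded by Proposition~\ref{prop:|D|}, so the conclusion is unaffected but the sentence should acknowledge the connection change (equivalently, work with coordinate derivatives throughout and convert to covariant ones at the end using the bounded Christoffel symbols of $g_s$, as in Lemma~\ref{lem:Gamma-bound}).
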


\section{Lower bounds for the injectivity radii of the convex sum}

This is the central section of the present paper.
The goal of this section is to prove a uniform lower estimate of injectivity
radius for the convex sum.
For this purpose,  we will utilize some idea entering in the proof of a quantitative inverse function theorem.
(See the proofs of \cite[Corollary 2.5.6]{abraham-marsden-ratiu} or of \cite[Section 8]{christ:hilbert} which
are given \emph{under the full uniform $C^2$ bounds on the maps}.) 

For each given tame metric $g$, we isometrically identify
$(T_p M, g|_p)$ with $\R^n$ with the standard
inner product once and for all. We denote this identification map by
$$
I_p^{g}: \R^n \to T_p M
$$
by choosing an orthonormal frame $\mathscr{B}$
of $TM|_{B_{r_0}}^{g}(p)$ of $g$.

\begin{rem}\label{rem:cheeger-pinching}
Compare the map $I^g_p$ with the linear isometry, also denoted by $I: T_{\underline m}\underline M \to  T_mM$,
between two Riemannian manifolds $M, \, \underline M$ in \cite{cheeger:pinching}. The map is then composed with an exponential map to study some continuity property of the composition
$$
 I \circ \exp_{\underline m}^{-1}: \underline M \to T_{m}M.
$$
One might regard the above map $I_q^g$ the same kind of map when $\underline M$ is the \emph{fixed} universal space $\R^n$.
\end{rem}

This map restricts to a diffeomorphism from
\be\label{eq:Up}
U_p:=(I_p^{g})^{-1}\left((\exp_p^{g})^{-1}(B_{r_0}^{g}(p))\right) \subset \R^n
\ee
to 
\be\label{eq:Vp}
V_p: = (\exp_p^{g})^{-1}(B_{r_0}^{g}(p)) \subset T_pM
\ee
for each $p \in M$ provided $r_0 < \iota_g$. We consider the map 
\be\label{eq:psipg}
\psi_q^g = \exp_q^g \circ I_q^g
\ee 
at each point $q \in M$. The following is easy to check but is a key lemma that plays a fundamental 
role in our application of an argument of the quantitative inverse function theorem.

\begin{lem}\label{lem:psiqg} 
Suppose that the metric $g$ is complete. Then the maximal domain of the map $\psi_q^g$ is $\R^n$.
\end{lem}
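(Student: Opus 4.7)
The plan is to give a one-step argument via the Hopf--Rinow theorem. Recall that $\psi_q^g = \exp_q^g \circ I_q^g$ where $I_q^g : \R^n \to T_qM$ is a linear isometry (in particular everywhere defined, with image all of $T_qM$). Therefore the question of the maximal domain of $\psi_q^g$ reduces to the question of the maximal domain of the exponential map $\exp_q^g : T_qM \to M$.

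By hypothesis the Riemannian metric $g$ is complete, meaning that the associated metric space $(M,d_g)$ is a complete metric space. The Hopf--Rinow theorem then asserts that $(M,g)$ is \emph{geodesically complete}: for every $q \in M$ and every initial velocity $v \in T_qM$, the geodesic $\gamma_{q,v}$ with $\gamma_{q,v}(0)=q$, $\dot\gamma_{q,v}(0)=v$ exists for all time $t \in \R$. Equivalently, the exponential map $\exp_q^g$ is defined on the full tangent space $T_qM$, with no restriction on the length of $v$.

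Precomposing with the linear isometry $I_q^g$, which is globally defined on $\R^n$ and surjective onto $T_qM$, we conclude that
$$
\psi_q^g = \exp_q^g \circ I_q^g : \R^n \longrightarrow M
$$
has maximal domain equal to all of $\R^n$, as claimed.

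This argument is completely routine; I do not expect any serious technical obstacle. The only subtle point worth noting is the equivalence between the two notions of completeness (metric-space completeness versus geodesic completeness) that underlies the appeal to Hopf--Rinow, but this is standard and unambiguous in the smooth Riemannian setting assumed throughout.
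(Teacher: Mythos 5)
Your proof is correct and takes essentially the same approach as the paper, which also derives the statement immediately from the definition $\psi_q^g = \exp_q^g \circ I_q^g$ and the fact that completeness of $g$ makes $\exp_q^g$ globally defined on $T_qM$; you have simply made explicit the underlying appeal to Hopf--Rinow, which the paper leaves implicit.
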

\begin{proof} This is an immediate consequence of the definition of $\psi_q^g = \exp_q^g \circ I_q^g$ 
from the completeness hypothesis of the metric
so that the exponential map $\exp_q^g$ is globally defined at all $q \in M$.
\end{proof}

Let $g_{\text{\rm rf}}$ be any given reference \emph{tame} metric 
and consider a constant 
\be\label{eq:r0}
0 < r_0 < \iota_{g_{\text{\rm rf}}}.
\ee
We fix this $r_0$ and a coordinate atlas of $M$ given by 
\be\label{eq:geodesic-atlas}
\left\{\left( (\psi_q^{g_{\text{\rm rf}}})^{-1},B^{g_{\text{\rm rf}}}_{r_0}(q)\right)\right\}_{q \in M};  \quad \psi_q^{g_{\text{\rm rf}}}: =
\exp_{q}^{g_{\text{\rm rf}}} \circ I_q^{g_{\text{\rm rf}}}
\ee
which provides the Gaussian normal coordinates $(x^1,\ldots, x^n) := (\psi_q^{g_{\text{\rm rf}}})^{-1}$
on the geodesic normal balls $B^{g_{\text{\rm rf}}}_{r_0}(q)$ associated to the metric $g_{\text{\rm rf}}$.
Without loss of generality,  we may assume that \emph{the geodesic normal ball
$$
B^{g_{\text{\rm rf}}}_{r_0}(q) =  (\psi_q^{g_{\text{\rm rf}}})^{-1}(B^{n}(r_0))
$$
is strongly convex for all $q \in M$ by shrinking $r_0$ further if necessary.}
Such a shrinking of $r_0> 0$ can be verified by the tame property of
the reference metric $g_{\text{\rm rf}}$. 
(See \cite[Appendix p.103]{cheeger-ebin} for its proof.)

For two different points $q, \, p$, we have the following diagram:

\medskip

\be
\xymatrix{&U_q \ar[ld]_{I_q^{g_0}} \ar@{^{(}->}[r]\ar[d]_{\psi_q^{g_0}}  
& {\R^n} \ar[ld]^{\psi_q^{g_0}} \ar[rd]_{\psi_p^{g_1}} & U_p \ar@{_{(}->}[l]\ \ar[d]^{\psi_p^{g_1}} 
\ar[rd]^{I_p^{g_1}}\\
V_q \ar@{^{(}->}[r]_{\exp_q^{g_0}} & M & {} & M & V_p
\ar@{_{(}->}[l]^{\exp_p^{g_1}}.
}
\ee
\medskip

\noindent (We note that the map $\psi_q^g$ is globally defined on whole $\R^n$ as long as the metric $g$
is complete.  Therefore the map restricts to an injective map on any open subset of the type $U_q$
defined above at every point $q \in M$ by the definition of geodesic normal ball $B_r^g(q)$.)

Recalling the definition of the injectivity radius function $\inj_g: M \to \R$
and the injectivity radius $\iota_g: = \inf_{x \in M} \inj_g(x)$,
 the following uniform lower estimate of the injectivity radii is the key result towards
 the proof of our convexity result.

\begin{theorem}\label{thm:iota-bounds} Suppose $\|R_{g_k}\|_{C^1} \leq C$ 
and $\iota_{g_k} \geq \epsilon_k$ for $k = 0, \, 1$. 
Consider the convex sum $g_s = (1-s)g_0 + sg_1$. Then
there exists some $\epsilon' = \epsilon'(C, \epsilon_0, \epsilon_1) > 0$
such that
$$
\iota_{g_s} \geq \epsilon' > 0
$$
for all $s \in [0,1]$.
\end{theorem}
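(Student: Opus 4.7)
The plan is to apply a quantitative inverse function theorem to the map $\psi_p^{g_s} = \exp_p^{g_s} \circ I_p^{g_s}$ of \eqref{eq:psipg} read in a \emph{fixed} atlas of $g_0$-normal coordinates, exploiting that Theorems \ref{thm:Rs-bounds} and \ref{thm:DRs-bounds} already furnish uniform curvature and derivative-of-curvature bounds on $g_s$ along the entire path $s \in [0,1]$.

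Setting up the atlas, I would take $g_0$ as the reference metric and fix $r_0 < \iota_{g_0}/2$ small enough that each $B_{r_0}^{g_0}(p)$ is strongly convex; this gives the atlas $\{((\psi_p^{g_0})^{-1}, B_{r_0}^{g_0}(p))\}_{p \in M}$ of \eqref{eq:geodesic-atlas}, in which the metric coefficients of $g_0$ are $C^2$-controlled via the normal coordinate expansion \eqref{eq:gij}. Invoking Proposition \ref{prop:A-bllipschitz-convex} together with the standing quasi-isometry hypothesis on $g_0, g_1$, the convex sum $g_s$ is complete and $A$-quasi-isometric to $g_0$ with a uniform constant $A$, so every $g_s$-geodesic ball of radius at most $r_0/A$ centered at $p$ lies inside $B_{r_0}^{g_0}(p)$. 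I then form, for each $p \in M$ and $s \in [0,1]$, the composite
$$
G_{p,s} := (\psi_p^{g_0})^{-1} \circ \psi_p^{g_s},
$$
defined on the component of $(\psi_p^{g_s})^{-1}\bigl(B_{r_0}^{g_0}(p)\bigr)$ containing $0 \in \R^n$. This composite automatically inherits injectivity from the chart $(\psi_p^{g_0})^{-1}$, so injectivity of $G_{p,s}$ is equivalent to injectivity of $\psi_p^{g_s}$ on the same set.

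Next I would verify the hypotheses of the quantitative inverse function theorem (in the form of \cite[Cor.~2.5.6]{abraham-marsden-ratiu}) for $G_{p,s}$ uniformly in $(p,s)$. One has $G_{p,s}(0) = 0$ and $DG_{p,s}(0) = L_{p,s} := (I_p^{g_0})^{-1} \circ I_p^{g_s}$, a change-of-orthonormal-frame matrix between $g_s$ and $g_0$ at $p$; by quasi-isometry and a Gram--Schmidt construction of $g_s$-frames from a fixed $g_0$-frame, both $\|L_{p,s}\|$ and $\|L_{p,s}^{-1}\|$ are uniformly bounded. The key analytic input is a uniform $C^2$ bound on $G_{p,s}$ on a ball $B^n(r_1)$ of fixed radius: the $C^2$ norm of $\exp_p^{g_s}$ is controlled by $\|R_{g_s}\|_{C^0}$ through the Jacobi equation, the higher-order terms relevant to $\|DG_{p,s} - L_{p,s}\|_{C^0}$ are controlled by $\|R_{g_s}\|_{C^1}$, and the chart $(\psi_p^{g_0})^{-1}$ contributes only $g_0$-dependent quantities controlled by $\|R_{g_0}\|_{C^1}$. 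By Theorems \ref{thm:Rs-bounds} and \ref{thm:DRs-bounds} all of these are uniform, producing a constant $K = K(C, \epsilon_0, \epsilon_1)$ with $\|D^2 G_{p,s}\|_{C^0(B^n(r_1))} \le K$. The mean value inequality then gives $\|DG_{p,s}(x) - L_{p,s}\| \le K|x|$ on $B^n(r_1)$, so for
$$
\delta := \min\Bigl\{r_1,\ r_0/A,\ \bigl(2K\sup_{p,s}\|L_{p,s}^{-1}\|\bigr)^{-1}\Bigr\},
$$
the quantitative inverse function theorem concludes that $G_{p,s}$ is a diffeomorphism onto its image on $B^n(\delta)$; hence $\psi_p^{g_s}$ is injective on $B^n(\delta)$, forcing $\inj_{g_s}(p) \ge \delta$ for every $p$ and every $s$. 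We take $\epsilon' := \delta$.

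The main obstacle I anticipate is making the $C^2$ bound on $G_{p,s}$ genuinely uniform across all $(p,s)$ simultaneously. This forces one to intertwine three separate estimates --- the Jacobi-field derivative bounds for $\exp_p^{g_s}$ (which demand the $C^1$ bound on $R_{g_s}$, and hence the $C^3$-tameness of $g_0, g_1$), the inverse-chart derivative bounds for $(\psi_p^{g_0})^{-1}$, and the compatibility between a smoothly varying family of $g_s$-orthonormal frames and the fixed $g_0$-frame. It is precisely this interplay that necessitates the $C^3$-tameness and prevents the quantitative zone of the inverse function theorem from collapsing somewhere along the path.
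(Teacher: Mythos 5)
Your proposal reaches the same key technical tool as the paper --- the quantitative inverse function theorem of \cite[Cor.~2.5.6]{abraham-marsden-ratiu} applied to the change-of-normal-coordinates map $(\psi_p^{g_0})^{-1}\circ\psi_p^{g_s}$, with the uniform $C^2$ bound on that map supplied by the curvature and derivative-of-curvature bounds of Theorems \ref{thm:Rs-bounds} and \ref{thm:DRs-bounds} (ultimately by a Cheeger-type comparison, Lemma \ref{lem:2nd-derivative}) --- but you organize the argument quite differently. The paper proceeds by an open--closed--compactness scheme: it defines $B = \{s : \iota_{g_s} > 0\}$, proves openness of $B$ (Proposition \ref{prop:one-one}) via local $C^\infty$ convergence of $g_{s'}\to g_s$, proves closedness by contradiction (Subsection \ref{subsec:closedness}, where the IFT is invoked against a hypothetical sequence $p_i$ with $\inj^{g_{s_i}}(p_i)\to 0$), and only then extracts a uniform $\epsilon'$ by covering $[0,1]$ with finitely many of the openness neighborhoods. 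You skip the open--closed bookkeeping entirely: since the $C^2$ bound on $G_{p,s}$ is uniform over \emph{all} $(p,s)\in M\times[0,1]$ --- not merely along a bad sequence --- you read off a fixed radius $\delta$ from the quantitative IFT on which $\psi_p^{g_s}$ is a diffeomorphism, and $\epsilon' := \delta$ works at once. This is cleaner and more in the spirit of what the quantitative IFT is for; note that the paper's closedness step in fact already requires the same uniform estimate over an a priori arbitrary sequence $(p_i,s_i)$, so your version is not assuming more. Two small remarks on your write-up: (i) as stated, Theorem \ref{thm:iota-bounds} only lists a $C^0$ curvature bound, but --- as you correctly observe and as the surrounding Theorem \ref{thm:convexity} makes explicit --- the $C^1$ bound $\|R_{g_k}\|_{C^1}\le C$, i.e. $C^3$-tameness, is genuinely needed for $\|D^2 G_{p,s}\|$; your proposal is internally consistent with the paper here. (ii) The constant $\epsilon'$ also depends on the quasi-isometry ratio $A(g_0,g_1)$, which appears in your $\delta$ through $r_0/A$ and through $L_{p,s}$; the paper's statement suppresses this dependence, so your explicit accounting is if anything a small improvement.
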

As mentioned before, the main difficulty of the proof lies in the facts:
\begin{enumerate}
\item The set of $C^r$ tame metrics may not be convex with low regularity $1 \leq r  <3$.
\item The function $s \mapsto \iota_{g_s}$ is not a continuous function in \emph{strong} $C^k$ topology.
\end{enumerate}

Our strategy of proving the theorem is as follows:
Let $B \subset [0,1]$ be the set of $s \in [0,1]$ such that $\iota_{g_s} > 0$. By the given hypothesis, $B$ is nonempty.
Then we will show that $B$ is open and closed which will then show that $B=[0,1]$ by the connectedness of 
the interval $[0,1]$. After that, we will find a lower bound $\epsilon'>0$ of injectivity radius and finish the proof.

\subsection{Implication of curvature estimates: openness}

Let $s \in [0,1]$ for which $\iota_{g_s} > 0$. 
We will find some $\delta > 0$ and $\epsilon > 0$ for which
$\iota_{g_{s'}} > \epsilon$ for all $s' \in (s - \delta, s + \delta) \cap [0,1]$.
We consider the  initial value problem for the geodesic equation of the metric $g_{s'}$
\be\label{eq:geodesi-i}
\nabla_{\dot \gamma}^{g_{s'}} \dot \gamma = 0, \quad \gamma(0) = p
\ee
in terms of the Gaussian normal coordinates $(x^1, \cdots, x^n)$ at each $p$ and $s \in [0,1]$.
With respect to the associated canonical coordinates
$$
(x^1, \cdots, x^n, v^1,\cdots, v^n)
$$
of $(x^1, \cdots, x^n)$,
the equation becomes the equation of geodesic flow  of $g_s$ 
given by
\be\label{eq:geodesic-flow}
\begin{cases}\dot x^j = v^j, \\
\dot v^j = - \Gamma^j_{k\ell}(x^1,\cdots, x^n) v^kv^\ell.
\end{cases}
\ee
We recall that the geodesic flow is the Hamiltonian flow of the kinetic energy of $g_s$ on $TM$
which is globally defined on $TM$, \emph{provided the metric $g$ is complete}.
The equation \eqref{eq:geodesic-flow}
is the coordinate expression thereof in the canonical coordinates.
(See \cite{klingenberg} for detailed explanations of this point of view.)

\begin{rem}  It follows from the expression \eqref{eq:Gammakij} of the Christoffel symbols
that the local existence, uniqueness and continuity of solutions of \eqref{eq:geodesic-flow}
hold as long as
the metric $g$ is in the class of $C^{1,1}$ and uniform if there is a uniform bound on
$C^{1,1}$-norm as in the case \emph{when there are uniform bounds on the curvature and
the quasi-isometric ratio $A(g,g_{\text{\rm rf}})$ with respect to a given back-ground metric 
$g_{\text{\rm rf}}$}.
\end{rem}
The following lemma is standard, which states that the geodesic flow on $TM$ is
nothing but the Hamiltonian flow of $K_g$ for any Riemannian metric $g$.

\begin{lem} \label{lem:geodesic-flow}
Let $\phi_{K_g}^t$ be the flow of the vector field $X_{K_g}$ and $\pi:TM \to M$ the canonical
projection. Then
$$
\exp_p^g(v) = \pi \circ \phi_{K_g}^1(p,v)
$$
and the metric $g$ is complete if and only if the vector field $X_{K_g}$ is
complete.
\end{lem}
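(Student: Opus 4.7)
The plan is to handle the two assertions in turn. For the identification $\exp_p^g(v) = \pi\circ\phi_{K_g}^1(p,v)$, I would first transport $K_g$ to $T^*M$ via the $g$-musical isomorphism $g^\flat: TM \to T^*M$, under which $K_g(p,v) = \tfrac12 g_p(v,v)$ pulls back from the cometric Hamiltonian $H(p,\xi) = \tfrac12 g^{ij}(p)\xi_i\xi_j$ on the canonically symplectic $(T^*M,\omega_{\mathrm{can}})$. Writing Hamilton's equations in Darboux coordinates $(x^i,\xi_i)$ and then pushing forward to $TM$ via $\xi_i = g_{ij}v^j$, a direct computation using $\partial_k g^{ij} = -g^{ia}g^{jb}\partial_k g_{ab}$ together with the Christoffel formula \eqref{eq:Gammakij} recovers exactly the geodesic-flow system \eqref{eq:geodesic-flow}. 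Consequently, the projected curve $\gamma(t) := \pi(\phi_{K_g}^t(p,v))$ is the unique geodesic of $g$ with $(\gamma(0),\dot\gamma(0)) = (p,v)$, and setting $t=1$ yields the claimed identity from the definition of $\exp_p^g$.

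For the completeness equivalence, the plan is to invoke Hopf--Rinow. If $X_{K_g}$ is complete, then $\phi_{K_g}^t(p,v)$ exists for all $t\in\R$ and all $(p,v)\in TM$, whence by the first part every maximal geodesic of $g$ extends over $\R$; this is geodesic completeness, which by Hopf--Rinow is equivalent to metric completeness of $(M,d_g)$. Conversely, if $g$ is metrically complete, Hopf--Rinow gives geodesic completeness, and by the first part each integral curve of $X_{K_g}$ is the velocity lift $(\gamma,\dot\gamma)$ of a globally defined geodesic, so $X_{K_g}$ is complete.

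Both halves are classical, so I do not anticipate a substantive obstacle. The only care-point is bookkeeping with the musical isomorphism when transferring Hamilton's equations from $T^*M$ to $TM$, so that the Christoffel terms surface with the sign appearing in \eqref{eq:geodesic-flow}. An alternative equivalent route avoiding $T^*M$ altogether is to characterize geodesics variationally as extremals of the energy functional $\tfrac12\int g(\dot\gamma,\dot\gamma)\,dt$, whose Euler--Lagrange equations are exactly Hamilton's equations for $K_g$ on $TM$ once one uses the Legendre transform induced by $g$; this formulation makes the identification with $\phi_{K_g}^t$ immediate and reduces the completeness statement to the standard Hopf--Rinow dichotomy.
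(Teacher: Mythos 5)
Your proof is correct and follows the standard treatment (Legendre transform / musical isomorphism to $T^*M$, a computation matching Hamilton's equations to \eqref{eq:geodesic-flow}, then Hopf--Rinow for the completeness equivalence). The paper supplies no proof of this lemma --- it declares it standard and points to Klingenberg --- so your argument is exactly the kind of verification the paper implicitly defers to.
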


We apply this lemma to the present one-parameter family $\{g_s\}$ of metrics $g_s$.
We denote by $K_s : TM \to \R$ the kinetic energy Hamiltonian of $g_s$ with $s \in [0,1]$,
$X_{K_s}$ the associated Hamiltonian vector field and $\phi_{K_s}^t$ the associated
Hamiltonian flow which is nothing but the geodesic flow of the metric $g_s$. 
Since $g_s$ is complete, the Hamiltonian flow thereof is  well-defined 
for all time $t \in \R$, 
in particular a complete trajectory of the geodesic flow of each $g_s$
with the initial condition $(x(0) ,v(0))= ( p, v_0)$ exists for every $v_0 \in T_pM$.

Furthermore the curvature bound implies the following bounds.

\begin{lem}\label{lem:Gamma-bound}  Under the same hypotheses as above in the standing hypotheses of Section \ref{sec:Rs-bound}, 
there is a uniform constant $C'''> 0$ for which on 
$B_{r_0}^{g_{\text{\rm rf}}}(q)$
\be\label{eq:Gamma-bound}
\|\Gamma^j_{k\ell} \|_{C^0}, \quad  \left \|\Gamma^j_{k\ell;m}\right\|_{C^0} \leq C'''
\ee
holds for all $j, \, k,\, \ell, \, m$.
\end{lem}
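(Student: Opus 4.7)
The plan is to reduce the desired Christoffel bounds for $g_s$ on $B^{g_{\text{\rm rf}}}_{r_0}(q)$ to uniform $C^2$-bounds on the metric coefficients $(g_s)_{ij}$ expressed in the $g_{\text{\rm rf}}$-normal coordinates fixed by \eqref{eq:geodesic-atlas}, and then read off the required estimates from the formula \eqref{eq:Gammakij} and its first partial derivative. The first ingredient is that Theorems \ref{thm:Rs-bounds} and \ref{thm:DRs-bounds}, together with the standing hypotheses of Section \ref{sec:Rs-bound}, furnish constants $\widetilde C = \widetilde C(C_0,C_1,r_0,r_1,\lambda_P)$ and $\widetilde C' = \widetilde C'(C_0,C_1,r_0,r_1,\lambda_P,C_2)$ for which $\|R_{g_s}\|_{C^0} \leq \widetilde C$ and $\|DR_{g_s}\|_{C^0} \leq \widetilde C'$ uniformly in $s \in [0,1]$.

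Next, at an arbitrary base point $p \in B^{g_{\text{\rm rf}}}_{r_0}(q)$, I would apply the Sakai expansion \eqref{eq:gij} to each of $g_0$ and $g_1$ in its own geodesic normal chart centred at $p$ — available on a ball of uniform radius because $\iota_{g_k} \geq \epsilon_k > 0$ — and differentiate once and twice to extract pointwise bounds on $\partial (g_k)_{ij}$ and $\partial^2 (g_k)_{ij}$ in those charts that are controlled purely by $\|R_{g_k}\|_{C^0}$ and $\|DR_{g_k}\|_{C^0}$. Lemma \ref{lem:cheeger}, together with a parallel $C^1$-estimate on the transition between the $g_k$-normal chart and the $g_{\text{\rm rf}}$-normal chart produced again from the curvature bound, transports these pointwise estimates into the ambient $g_{\text{\rm rf}}$-normal coordinates. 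Convex linearity of $s \mapsto g_s$ then immediately propagates the same uniform $C^2$-bounds to $(g_s)_{ij}$, and the quasi-isometry of $g_s$ with $g_{\text{\rm rf}}$ supplied by Proposition \ref{prop:A-bllipschitz-convex} supplies a uniform upper bound on the inverse matrix $(g_s)^{ij}$.

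Substituting the $C^2$-estimate on $(g_s)_{ij}$ and the $L^\infty$-estimate on $(g_s)^{ij}$ into \eqref{eq:Gammakij} yields a uniform bound on $\|\Gamma^j_{k\ell}\|_{C^0}$, and differentiating \eqref{eq:Gammakij} once more — invoking also the bound on $\partial^2 (g_s)_{ij}$ — yields the corresponding uniform bound on $\|\Gamma^j_{k\ell;m}\|_{C^0}$. Taking the maximum of the two constants produces the desired $C'''$.

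The delicate step is the second one: the Sakai expansion \eqref{eq:gij} only controls the derivatives of $(g_k)_{ij}$ at the origin of its normal chart, so to upgrade this to a uniform $L^\infty$-bound over the whole ambient ball $B^{g_{\text{\rm rf}}}_{r_0}(q)$ one must re-centre the $g_k$-normal chart at each $p$ in the ball and patch the pointwise estimates together. This is legitimate precisely because the uniform lower bounds $\iota_{g_k} \geq \epsilon_k$ provide a uniform domain for the re-centred charts, and because $C^3$-tameness controls the remainder term in \eqref{eq:gij} independently of the base point; absent either input the re-centring argument would fail, which is a further reminder of why the $C^3$-tameness hypothesis enters in an essential way.
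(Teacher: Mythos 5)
Your proposal takes essentially the same route as the paper: bound $\partial^2 (g_k)_{ij}$ via the Sakai expansion \eqref{eq:gij} and the curvature control, transport to the fixed $g_{\text{\rm rf}}$-normal chart, use linearity of $s \mapsto (g_s)_{ij}$ and quasi-isometry control of $(g_s)^{ij}$, and read off the bounds from \eqref{eq:Gammakij} and its derivative. The paper dispatches the first bound by reference to the proof of Proposition~\ref{prop:|D|} and then handles $\Gamma^j_{k\ell;m}$ via the same mechanism, while you rederive both bounds; this is a presentational rather than a substantive difference.

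One step is stated imprecisely, however, in a way that is worth flagging. You say that Lemma~\ref{lem:cheeger} ``together with a parallel $C^1$-estimate on the transition'' suffices to transport the pointwise bounds on $\partial^2 (g_k)_{ij}$ into the $g_{\text{\rm rf}}$-normal chart. A $C^1$-bound on the coordinate change does \emph{not} suffice here: to convert a bound on $\partial^2 g_{ij}$ from one normal chart to another via
$$
(g_k)^{(x)}_{m\ell} = \frac{\partial y^a}{\partial x^m}\frac{\partial y^b}{\partial x^\ell}\,(g_k)^{(y)}_{ab}
$$
one must differentiate twice, which brings in \emph{third} derivatives of the transition map. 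Controlling those requires exactly the estimates of the type Lemma~\ref{lem:2nd-derivative} provides at one order higher, which is precisely where the derivative curvature bound $\|DR_{g_k}\|_{C^0}$ (i.e.\ $C^3$-tameness) is used. You do correctly flag $C^3$-tameness as essential, but attribute it only to controlling the Sakai remainder term at each re-centred base point; in fact its more critical role is in furnishing $C^3$-bounds on the chart transitions needed for the transport. (The paper itself is terse on this, citing only Lemma~\ref{lem:cheeger} for the transfer, so your gap mirrors the paper's; but as written your claim that a $C^1$-estimate is enough would not close the argument for the $\Gamma^j_{k\ell;m}$ bound.)
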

\begin{proof} Since the first bound is already proved in the course of the proof of
Proposition \ref{prop:|D|}, we have only to prove the second bound.
For this, we differentiate \eqref{eq:gij} twice and obtain
$$
g_{ij;km}(p) =   \frac13 R_{ikm j}(p)
$$
in any geodesic normal coordinates at each $p$.
This provides the bound 
$$
\|g_{ij;km}\|_{C^0} < C  \quad \text{\rm  on }\, B_{r_0}^{g_i}(p)
$$
for some constant $C> 0$ depending only on the curvature bound and 
the injectivity radius lower bound of $g_i$ for $i = 0, \, 1$.

Therefore since $\Gamma_{ij;m}$ is a sum of polynomials of $g_{ij;km}$, $g_{ij}$
and $g^{ij}$ of degree at most 3,  we have finished the proof again by applying Lemma \ref{lem:cheeger}.
\end{proof}

 We now consider the parameterized exponential map
\be\label{eq:psii}
(s,t,v) \mapsto \exp_p^{g_s}(t v) = \pi \circ \phi_{K_{s}}^t (p,v).
\ee
By definition, we have the (space) tangent map of $\exp_p^{g_s}: T_pM \to M$
$$
d \exp_p ^{g_{s}}= d \pi d\phi_{K_s}^t
$$
for each $s$.
We recall $d\exp_p^{g_s}(0): T_p M \to T_p M $ is the identity map
for all $s \in [0,1]$.

\begin{prop}\label{prop:one-one} Suppose $\iota_{g_s} > 0$ for an $s \in [0,1]$.
There exists a sufficiently small $\delta,  \, \epsilon > 0$ such that 
$$
\iota_{g_{s'}}> \epsilon
$$
for all $s' \in (s-\delta,s+\delta) \cap [0,1]$. 
\end{prop}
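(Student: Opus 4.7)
The plan is to apply the quantitative inverse function theorem to the globally defined maps $\psi_p^{g_{s'}} = \exp_p^{g_{s'}} \circ I_p^{g_{s'}} : \R^n \to M$ of \eqref{eq:psipg}, uniformly in $p \in M$ and in $s'$ close to $s$. Once I produce a radius $\rho > 0$, depending only on the $C^1$ curvature bounds and the injectivity radius data, such that every $\psi_p^{g_{s'}}|_{B^n(\rho)}$ is a diffeomorphism onto its image, the desired bound $\iota_{g_{s'}} \geq \rho$ is automatic. Indeed, $I_p^{g_{s'}}$ is a linear isometry from $(\R^n,\text{std})$ to $(T_pM, g_{s'})$, so $B^n(\rho)$ corresponds under $I_p^{g_{s'}}$ to the $g_{s'}$-ball of radius $\rho$ in $T_pM$, and injectivity of $\psi_p^{g_{s'}}$ on $B^n(\rho)$ at every $p$ delivers the injectivity radius lower bound by definition.

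The main analytic input is a uniform $C^2$ bound on $\psi_p^{g_{s'}}$ on a ball $B^n(\rho_0)$, with $\rho_0$ independent of $p$ and of $s'$ near $s$. To produce it, I would work in the reference atlas \eqref{eq:geodesic-atlas} of Gaussian normal charts of $g_{\text{rf}}$ of size $r_0 < \iota_{g_{\text{rf}}}$, and express $\psi_p^{g_{s'}}$ through the time-$1$ geodesic flow of $g_{s'}$, which is the Hamiltonian flow of the kinetic energy $K_{s'}$ by Lemma \ref{lem:geodesic-flow} with coordinate form \eqref{eq:geodesic-flow}. By Theorems \ref{thm:Rs-bounds} and \ref{thm:DRs-bounds}, the quantities $\|R_{g_{s'}}\|_{C^0}$ and $\|DR_{g_{s'}}\|_{C^0}$ admit bounds depending only on $C, \epsilon_0, \epsilon_1$, uniformly in $s' \in [0,1]$. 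Lemma \ref{lem:Gamma-bound} then converts these into the uniform bounds \eqref{eq:Gamma-bound} on the Christoffel symbols of $g_{s'}$ and their first derivatives on every chart of \eqref{eq:geodesic-atlas}. Standard ODE theory with parameters promotes these $C^1$ bounds on the geodesic vector field into uniform $C^1$ bounds on its time-$1$ flow, which amounts to the desired uniform $C^2$ bound on $\psi_p^{g_{s'}}$ on some $B^n(\rho_0)$.

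To control the linearization at the origin, note that $d\psi_p^{g_{s'}}(0) = I_p^{g_{s'}}$ is a linear isometry from $(\R^n,\text{std})$ to $(T_pM, g_{s'})$, while Proposition \ref{prop:A-bllipschitz-convex} ensures that $g_{s'}$ remains in a fixed quasi-isometry class with $g_{\text{rf}}$, so that $(I_p^{g_{s'}})^{-1}$ has operator norm uniformly bounded when measured in the reference chart. Feeding the uniform $C^2$ bound and this uniform linearized-inverse bound into the quantitative inverse function theorem, cf.\ \cite[Corollary 2.5.6]{abraham-marsden-ratiu} or \cite[Section 8]{christ:hilbert}, yields the required $\rho > 0$.

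The main obstacle to address is that a lower bound on $\iota_{g_{s'}}$ is a \emph{nonlocal} statement, and in principle short closed geodesics can appear under arbitrarily small perturbations of the metric. The quantitative nature of the IFT is precisely what rules this out here: a closed geodesic of length less than $2\rho$ based at $p$ would produce, via its two halves, two distinct preimages of its midpoint under $\psi_p^{g_{s'}}$ inside $B^n(\rho)$, contradicting the injectivity on $B^n(\rho)$ provided by the theorem. Taking any $0 < \delta \leq 1$ and $\epsilon = \rho$ then completes the proof.
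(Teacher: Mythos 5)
Your proof takes a genuinely different route from the paper's own proof of Proposition~\ref{prop:one-one}. The paper argues perturbatively: it uses the hypothesis $\iota_{g_s}>0$ to fix geodesic normal charts $\psi_p^{g_s}$, proves that the transition map $\Phi_{ss'}=(\psi_p^{g_s})^{-1}\circ\psi_p^{g_{s'}}$ tends to the identity in $C^1$ on $\overline{B^n(r_s)}$ as $s'\to s$, and then concludes via a soft open-mapping argument that $\exp_p^{g_{s'}}$ stays injective on a ball of radius $r_s/2$. You instead apply the quantitative inverse function theorem directly to $\psi_p^{g_{s'}}$ read in a fixed reference chart, with the uniform $C^1$ curvature bounds from Theorems~\ref{thm:Rs-bounds}--\ref{thm:DRs-bounds} as input. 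This is precisely the mechanism the paper reserves for the \emph{closedness} step in Subsection~\ref{subsec:closedness}, and it buys you more: your argument never uses $\iota_{g_s}>0$, so if carried through it yields a uniform $\rho>0$ with $\iota_{g_{s'}}\geq\rho$ for \emph{all} $s'\in[0,1]$, i.e.\ the full Theorem~\ref{thm:iota-bounds} without the open/closed decomposition. The trade-off is that your route requires the $C^1$ curvature bound on $g_0,g_1$ (i.e.\ $\|DR_{g_k}\|_{C^0}\leq C$), which the paper's openness step does not; this is fine since that bound is among the standing hypotheses of Theorem~\ref{thm:convexity}, but it is worth flagging.

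There is one off-by-one slip in the derivative accounting that you should repair. The chain ``Lemma~\ref{lem:Gamma-bound} $\Rightarrow$ $C^1$ bounds on the geodesic vector field $\Rightarrow$ $C^1$ bounds on the time-$1$ flow $\Rightarrow$ $C^2$ bound on $\psi_p^{g_{s'}}$'' loses a derivative at the last step: since $\psi_p^{g_{s'}}(x)=\pi\circ\phi^1_{K_{s'}}(p,I_p^{g_{s'}}x)$, the bound on $D^2\psi_p^{g_{s'}}$ requires $D^2\phi^1$, hence $D^2X$, hence $D^2\Gamma$ — but Lemma~\ref{lem:Gamma-bound} only controls $\Gamma$ and $D\Gamma$. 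The needed $D^2\Gamma$ bound does follow from $\|DR_{g_{s'}}\|_{C^0}\leq C''$ by differentiating the Taylor expansion \eqref{eq:gij} once more (this is in effect what Lemma~\ref{lem:IFT-condition}, proved in Appendix~\ref{sec:IFT-condition} via Lemma~\ref{lem:2nd-derivative}, supplies), so the ingredients are present; you should either invoke that lemma directly or carry the extra derivative explicitly through your ODE estimate rather than claiming $C^1$ flow bounds suffice.
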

\begin{proof}
By the standing hypothesis, at each point $p \in M$, there exists an atlas of the form
$$
\left\{\left((\psi_p^{g_s})^{-1}, B_{\iota_{g_s}}^{g_s}(p)\right)\right\}_{p \in M}.
$$ 
We denote by 
$$
(x_1, \cdots, x_n) = (\psi_p^{g_s})^{-1}
$$  
the associated geodesic normal coordinates of $g_s$ on $B_{\iota_{g_s}}^{g_s}(p)$.
For any $s'\in [0,1]$, we may apply Lemma \ref{lem:cheeger} and get $\kappa_{s'}>0$ so that
$$
B_{\kappa_{s'}}^{g_{s'}}(p)\subset B_{\iota_{g_s}}^{g_s}(p).
$$
Now let $r_s:=\inf_{s'\in[0,1]}\kappa_{s'}$. Then $r_s>0$ and 
$\exp_{p}^{g_s}\circ I_p^{g_{s}}$ is a diffeomorphism of $B^n(\iota_{g_s})$ onto its image $B_{\iota_{g_s} }^{g_s}(p)$ in $M$.
Also, the inclusion
$$
 (\exp_p^{g_{s'}} \circ I_p^{g_{s'}} )(B^n(r_s)) \subset (\exp_p^{g_s} \circ I_p^{g_{s}})(B^n(\iota_{g_s}))$$ 
 implies that the composition
$$
 (\exp_p^{g_s} \circ I_p^{g_{s}})^{-1} \circ (\exp_p^{g_{s'}} \circ I_p^{g_{s'}} ) 
 : B^n(r_s)  \subset \R^n \to  B^n(\iota_{g_s}) \subset \R^n
$$
is defined for all $s' \in [0,1]$ \emph{by the completeness of $g_{s'}$}. The latter also implies
the exponential map $\exp_p^{g_{s'}}$ is defined on whole $T_pM$.

\begin{lem}\label{lem:<epsilons}Let $s$ be given and let $r_s> 0$ be as above.
There exists a sufficiently small $\epsilon_s> 0$ and in turn a $\delta > 0$ such that
for all $s' \in (s-\delta,s+ \delta)\cap [0,1]$
$$
\|(\psi_p^{g_s})^{-1} \circ \psi_p^{g_{s'}}  - \id\|_{C^1} < \epsilon_s
$$
on $B^n(r_{s})$ where $\epsilon_s$ can be made as small as we want by taking $\delta > 0$ small.
\end{lem}
\begin{proof} We mention that  $g_{s'} \to g_s$ in local $C^\infty$ topology as $s' \to s$.
In particular 
$$
(\psi_p^{g_{s}})^{-1} \circ \psi_p^{g_{s'}} \to \id
$$
on $\overline B^n(r_s) $ in $C^\infty$ topology and so in $C^1$-topology.
The lemma follows from this.
\end{proof}

In particular the map
$$
 (\psi_p^{g_s})^{-1} \circ \psi_p^{g_{s'}} = : \Phi_{ss'}
$$
is a homeomorphism (and so a diffeomorphism) from $B^n(r_s)$
onto its image $\Phi_{ss'}(B^n(r_s))$. By letting $\delta > 0$ smaller  if necessary, we may assume
that 
\be\label{eq:subset}
\overline{B^n(r_{s'})} \subset \Phi_{ss'}(B^n(r_s))
\ee
where we can choose $r_{s'} = \frac{r_s}{2}$ for every $s' \in (s-\delta, s+\delta)\cap [0,1]$ 
which does not depend on $s'$.

Then the map
\beastar
(\psi_p^{g_{s'}})^{-1} & = & (\exp_p^{g_{s'}} \circ I_p^{g_{s'}} )^{-1}
= ( I_p^{g_{s'}} )^{-1} \circ  (\exp_p^{g_{s'}} )^{-1} \\
& = &  ( \Phi_{ss'})^{-1}  \circ (\psi_p^{g_s} )^{-1}
\eeastar
is a well-defined diffeomorphism of $B^n(r_s/2)$ onto its image in $M$ for all $s' \in (s-\delta,s+\delta)\cap [0,1]$.

By \eqref{eq:subset}, we have shown that $\exp_p^{g_{s'}}: I_p^{g_{s'}}(B^n(r_s/2)) \to M$
is a diffeomorphism onto its image. In particular, we have proved that
$$
\text{\rm inj}_{g_{s'}}(p) \geq \frac12 r_s
$$
for all $p \in M$ provided $s' \in (s- \delta, s+\delta) \cap [0,1]$ for a sufficiently small $\delta > 0$.
This proves the proposition (and hence the openness of $B$) by setting $\epsilon_s = \frac{r_s}{2}$.
\end{proof}

\subsection{Uniform injectivity radii lower bound near a limit point: closedness}
\label{subsec:closedness}

We know that the map
\be\label{eq:psipg2}
\psi_p^{g}: = \exp_p^g \circ I_p^g: \R^n \to M
\ee
is well-defined by Lemma \ref{lem:psiqg}. 

\begin{rem}\label{rem:upshot-psipg}
The upshot of considering this map is to 
standardize the geodesic normal balls whose centers move around in $M$ to
the \emph{fixed} ball centered at the origin of $\R^n$. By doing so, we can consider
the family $\{\psi_p^g\}_{g, p}$ whose domain is a fixed ball in the Euclidean space $\R^n$.
These maps will be especially useful later when we try to control the injectivity radius 
\emph{as  the centers of the geodesic balls escape to infinity}. 
\end{rem}

Let $s_i \in B$ be
a sequence converging to $s_0 \in [0,1]$.  We need to show $\iota_{g_{s_0} }> 0$
which we will prove by contradiction.
Suppose to the contrary that 
\be\label{eq:standing-hypothesis}
\iota_{g_{s_0}} = 0.
\ee

We start with the following obvious lemma.

\begin{lem} There exists a constant $A_0\geq 1 $ such that
$$
\frac1{A_0} g_{s_0} \leq g_{s_i} \leq A_0 g_{s_0}
$$
and $\|R_{g_{s_i}} - R_{g_{s_0}}\|_{C^0} \to 0$ as $i \to \infty$.
\end{lem}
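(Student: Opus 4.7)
The lemma has two assertions. For the first (uniform bilipschitz comparison of $g_{s_i}$ and $g_{s_0}$), my plan is to leverage Proposition~\ref{prop:A-bllipschitz-convex} together with the standing assumption that $g_0$ and $g_1$ lie in the same quasi-isometry class $\mathfrak T$. Fix $A \geq 1$ with $A^{-1} g_0 \leq g_1 \leq A g_0$. A direct pointwise computation: for any $v \in T_pM$ and $s \in [0,1]$,
\[
g_s(v,v) = (1-s) g_0(v,v) + s g_1(v,v) \leq \bigl((1-s) + sA\bigr) g_0(v,v) \leq A g_0(v,v),
\]
and similarly $g_s(v,v) \geq A^{-1} g_0(v,v)$. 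Applying this to both $s = s_i$ and $s = s_0$ and combining yields $A^{-2} g_{s_0} \leq g_{s_i} \leq A^2 g_{s_0}$, so $A_0 := A^2$ works, \emph{independent of $i$}. The value $A_0$ depends only on the quasi-isometry constant between $g_0$ and $g_1$, not on the particular $s_i$'s.

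For the second assertion (uniform $C^0$-convergence of curvatures), the plan is to apply the Cavenaghi--Speranca formula \eqref{eq:Rs} and extract an estimate explicitly polynomial in $s$. Writing
\[
R_{g_s} = (1-s) R_{g_0} + s R_{g_1} + s(1-s)\, \Psi_s,
\]
where $\Psi_s$ is the endomorphism-valued expression built from $D_{g_0, g_1}$ and $((1-s)\mathrm{Id} + sP)^{-1}$, I would estimate
\[
\|R_{g_{s_i}} - R_{g_{s_0}}\|_{C^0} \leq |s_i - s_0|\bigl(\|R_{g_0}\|_{C^0} + \|R_{g_1}\|_{C^0}\bigr) + \bigl\|s_i(1-s_i)\Psi_{s_i} - s_0(1-s_0)\Psi_{s_0}\bigr\|_{C^0}.
\]
The first summand vanishes as $s_i \to s_0$ since $\|R_{g_k}\|_{C^0} \leq C$. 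For the second, I expand $s_i(1-s_i) \Psi_{s_i} - s_0(1-s_0)\Psi_{s_0}$ and bound the resulting terms using: (i) the uniform $C^0$ bound $\|D_{g_0,g_1}\|_{C^0} \leq C'$ from Proposition~\ref{prop:|D|}; (ii) the pointwise estimate $\|((1-s)\mathrm{Id} + sP)^{-1}\| \leq (\min\{1,\lambda_P\})^{-1}$ uniformly in $s$; and (iii) the elementary identity
\[
\bigl((1-s_i)\mathrm{Id} + s_i P\bigr)^{-1} - \bigl((1-s_0)\mathrm{Id} + s_0 P\bigr)^{-1} = (s_0 - s_i)\bigl((1-s_i)\mathrm{Id} + s_i P\bigr)^{-1}(P - \mathrm{Id})\bigl((1-s_0)\mathrm{Id} + s_0 P\bigr)^{-1},
\]
so that the difference is $O(|s_i - s_0|)$ in operator norm uniformly in $p \in M$, since $P$ is uniformly bounded by the bilipschitz constant $A$. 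Combining these gives $\|R_{g_{s_i}} - R_{g_{s_0}}\|_{C^0} = O(|s_i - s_0|) \to 0$.

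The only mildly subtle point I anticipate is that the $C^0$-norm on tensors requires a choice of reference metric, but since $g_{s_0}$ (or any $g_{s_i}$) is quasi-isometric to $g_{\text{\rm rf}}$ with a uniform constant by the first part, the $C^0$-norms measured in the different metrics are uniformly comparable; the $O(|s_i - s_0|)$ decay is preserved. The main "obstacle" is really just careful bookkeeping of the polynomial dependence in $s$ and verifying that the bounds extracted (on $P$, $P^{-1}$, $D$, and the curvatures) are already available from Proposition~\ref{prop:|D|} and Theorem~\ref{thm:Rs-bounds} uniformly in $s$.
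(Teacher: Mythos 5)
Your proposal is correct, and the paper itself gives no argument for this lemma (it is introduced as ``the following obvious lemma''), so your write-up simply supplies the bookkeeping the authors take for granted, using exactly the ingredients already established in the paper: the convexity computation of Proposition \ref{prop:A-bllipschitz-convex} (giving $A_0=A^2$ uniformly in $i$), the Cavenaghi--Speranca formula \eqref{eq:Rs}, the uniform bound on $D$ from Proposition \ref{prop:|D|}, and the uniform control of $P$, $\lambda_P$ and $((1-s)\mathrm{Id}+sP)^{-1}$ coming from the quasi-isometry constant. The only point worth making explicit is that \eqref{eq:Rs} controls $R_s(X,Y,Y,X)$, so passing to the $C^0$-norm of the full curvature tensor uses polarization and the uniform comparability of the metrics, which your final paragraph already addresses.
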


At this point, there are two cases to consider:
\begin{enumerate}
\item The case where $\iota_{g_{s_i}} > \epsilon_1 > 0$ for all $i$,
\item The case where there is a subsequence, still denoted by $s_i$, such that
$\iota_{g_{s_i}} \to 0$.
\end{enumerate}

\medskip

\noindent{\bf Case (1):} An examination of the proof of Proposition \ref{prop:one-one}
shows that the choice of $\delta> 0$ therein depends only on $\epsilon> 0$, the quasi-isometric
ratio $A(g_{s'},g_s)$. Therefore we can choose $\delta > 0$ independently of $i$'s 
by applying the same argument to all $g_{s_i}$ for the $\epsilon_1 > 0$ as in the proof of openness.
Therefore if $i$ is sufficiently large, $s_0 \in (s_i - \delta, s_i + \delta)$ 
 by the standing hypothesis \eqref{eq:standing-hypothesis}. This proves
$\iota_{g_{s_0}} > 0$. Therefore this case is ruled out.

\medskip

\noindent{\bf Case (2):} 
To proceed further, we will need a quantitative version of
inverse function theorem or rather some idea of its proof 
in some circumstance.
For readers' convenience, we recall the theorem stated in
 \cite[Corollary 2.5.6]{abraham-marsden-ratiu} below \emph{under the $C^2$ bounded assumption}
 of the relevant map $F$.

Let $g$ be any metric in the given sequence $\{g_{s_i}\}$ of  complete Riemannian metrics on $M$
which are  of bounded curvature, \emph{but a priori have no  uniform lower bound away from zero 
for  the  injectivity radii}.  

By the curvature bound, 
Case (2) means that there is a pair of geodesics 
$\gamma_{i,p_i}^\pm(t) = \exp_{p_i}(t v_i^\pm)$ such that 
\begin{enumerate}
\item $|v_i^\pm| = 1$ and $v_i^+ \neq v_i^-$ contained in $T_{p_i}M$, and
\item they satisfy
\be
\gamma_{i,p_i}^+(\ell_i^+) = \gamma_{i,p_i}^-(\ell_i^-),
\ee
with $\ell_i^\pm \to 0$ as  $i \to \infty $. 
\end{enumerate}
 (See \cite[Lemma 5.6]{cheeger-ebin} for example.)
Then we have $s_i \to s_0$ and $\text{\rm inj}^{g_{s_i}}(p_i) \to 0$ as $i \to\infty$.

We now quote the following quantitative inverse function theorem from 
\cite[Corollary 2.5.6]{abraham-marsden-ratiu} in which
the theorem is stated in the general setting of Banach manifolds.
(See also \cite[Section 8]{christ:hilbert} for an essentially same statement in the finite
dimensional case.)

\begin{theorem}[Inverse Function Theorem]\label{thm:IFT}
Let $F$ denote an arbitrary $C^2$ map from a convex open ball $U \subset \R^n$ to $\R^n$,
$x_0 \in U$ and $DF(x_0)$ is an isomorphism. We denote by $D^n$ and by $B^n$ a ball in the domain
and in the codomain of the map $F$ respectively. Write
$$
L = \|DF(x_0)\|, \, M = \|DF(x_0)^{-1}\|.
$$
Assume $\|D^2F(x)\| \leq K$ for $x \in D^n_R(x_0)$ and $\overline D^n_R(x_0) \subset U$.
Let
\beastar
R_1&  = & \min\left\{ \frac1{2KM},R\right\} \\
R_2 & = & \min\left\{\frac1R_1, \frac1{2M(L + KR_1)}\right\}, \quad R_3 = \frac{R_2}{2L}.
\eeastar
Then
\begin{enumerate}
\item $F$ maps the ball $D_{R_2}^n(x_0)$ diffeomorphically onto an open set containing the ball
$B_{R_3}^n(F(x_0))$. In particular, $F$ is one-to-one thereon.
\item For $y_1, \, y_2 \in  \overline B^n_{R_3}(F(x_0))$, we have
$$
\|F^{-1}(y_1) - F^{-1}(y_2)\| \leq 2L \|y_1 - y_2\|.
$$
\end{enumerate}
\end{theorem}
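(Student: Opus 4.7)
The plan is to realize $F^{-1}$ as the fixed point of a Newton-type contraction, which is the classical route to the quantitative inverse function theorem. For each $y$ near $F(x_0)$, define
$$
T_y(x) := x - DF(x_0)^{-1}\bigl(F(x)-y\bigr),
$$
so that $F(x)=y$ is equivalent to $T_y(x)=x$. Differentiating,
$$
DT_y(x) = DF(x_0)^{-1}\bigl(DF(x_0)-DF(x)\bigr),
$$
and the $C^2$ hypothesis via the mean value inequality yields $\|DF(x_0)-DF(x)\| \le K\|x-x_0\|$, so $\|DT_y(x)\| \le MK\|x-x_0\|$. The choice $R_1 \le 1/(2KM)$ therefore makes $T_y$ a $\tfrac{1}{2}$-contraction on $\overline D^n_{R_1}(x_0)$, uniformly in $y$.

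Next I would arrange that $T_y$ stabilizes a smaller ball. From $T_y(x_0)-x_0 = -DF(x_0)^{-1}(F(x_0)-y)$ one gets $\|T_y(x_0)-x_0\| \le M\|y-F(x_0)\|$, hence
$$
\|T_y(x)-x_0\| \le \tfrac{1}{2}\|x-x_0\| + M\|y-F(x_0)\|
$$
for $x\in \overline D^n_{R_1}(x_0)$. Picking $R_2 \le R_1$ and $R_3$ so that $\tfrac{1}{2}R_2 + MR_3 \le R_2$ ensures that $T_y$ maps $\overline D^n_{R_2}(x_0)$ into itself for every $y\in \overline B^n_{R_3}(F(x_0))$. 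Banach's fixed point theorem then yields a unique $x = F^{-1}(y) \in \overline D^n_{R_2}(x_0)$, establishing assertion (1); the auxiliary bound $\|DF(x)\| \le L + KR_1$ on $\overline D^n_{R_1}(x_0)$, which follows from integrating the $C^2$ bound out of $x_0$, is what feeds into the precise formulas for $R_2$ and $R_3$ in the statement.

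For the Lipschitz estimate in assertion (2), I would use the mean value identity $y_1 - y_2 = A(x_1 - x_2)$ with $A := \int_0^1 DF\bigl(x_2 + t(x_1-x_2)\bigr)\,dt$. Since $x_1, x_2 \in \overline D^n_{R_2}(x_0)$ one has $\|A - DF(x_0)\| \le KR_2$, so $\|DF(x_0)^{-1}A - I\| \le MKR_2 \le \tfrac{1}{2}$; the Neumann series gives $\|A^{-1}\| \le 2M$, whence $\|F^{-1}(y_1)-F^{-1}(y_2)\| \le 2M\|y_1-y_2\|$, which matches the stated Lipschitz bound up to the precise constant chosen. The main obstacle is not conceptual but bookkeeping: the three radii $R_1, R_2, R_3$ and the final Lipschitz constant must be tracked through nested inequalities without any circular dependence and uniformly in $y$. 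The analytic content (contraction mapping plus a quadratic Taylor remainder) is entirely classical, and nothing here depends on finite dimension, which is why the theorem extends verbatim to the Banach manifold setting of \cite{abraham-marsden-ratiu}.
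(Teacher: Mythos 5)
The paper does not supply its own proof of Theorem~\ref{thm:IFT}: it is quoted verbatim (up to notation) from \cite[Corollary 2.5.6]{abraham-marsden-ratiu}, and the paper's surrounding text explicitly says it is being cited, not proved. So there is no internal argument for me to compare yours against. Your proof---the Newton iterate $T_y(x)=x-DF(x_0)^{-1}(F(x)-y)$, the contraction estimate $\|DT_y\|\le MK\|x-x_0\|$, the stabilization of a smaller ball, Banach's fixed point theorem, and the mean-value/Neumann argument for the Lipschitz bound---is the standard proof, and it is also, in substance, the argument given in \cite{abraham-marsden-ratiu}. The qualitative content of your write-up is correct.

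Two places where you should be less casual about the ``bookkeeping,'' since you explicitly defer it. First, the constant you actually derive in assertion (2) is $2M$, with $M=\|DF(x_0)^{-1}\|$, not the stated $2L$ with $L=\|DF(x_0)\|$. These are \emph{not} the same up to choice: for a scaling by a factor $c\ne 1$ one has $L=c$ and $M=1/c$, so one of the two bounds fails. The natural bound on $\operatorname{Lip}(F^{-1})$ is of size $\|DF^{-1}\|\approx M$, and your contraction estimate $\|x_1-x_2\|\le \tfrac12\|x_1-x_2\|+M\|y_1-y_2\|$ gives exactly $2M$. The ``$2L$'' in the statement (and likewise $R_3=R_2/(2L)$ and the term $1/R_1$ in the formula for $R_2$, which is dimensionally strange and does not guarantee $R_2\le R_1$) look like transcription slips; you should point this out rather than phrasing it as ``matches $\ldots$ up to the precise constant chosen,'' because as literally stated the constant is wrong, not merely different. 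Second, you invoke the auxiliary bound $\|DF(x)\|\le L+KR_1$ and say it ``feeds into'' the formulas for $R_2,R_3$ but never actually use it; to claim the theorem with the exact advertised radii you either need to carry that bookkeeping through, or simply state (which is enough for the paper's application) that any $R_2\le R_1=\min\{1/(2KM),R\}$ and $R_3\le R_2/(2M)$ work. You should also record in one line that $F(D^n_{R_2}(x_0))$ is open because $DF(x)$ remains invertible (Neumann series, using $KR_2\le 1/(2M)$) throughout $D^n_{R_2}(x_0)$; this is needed for ``diffeomorphically onto an open set.''
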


We would like to apply Theorem \ref{thm:IFT}
to the sequence of maps given by
\be\label{eq:Fi}
F_i(x): = \Phi_{0s_i}(x) = (\psi_{p_i}^{g_0})^{-1}( \psi_{p_i}^{g_{s_i}} (x))
\ee
as follows. We  recall from \eqref{eq:psipg} that the maps
$$
 \psi_{p_i}^{g_{s_i}} = \exp_{p_i}^{g_{s_i}} \circ I_{p_i}^{g_{s_i}}
$$ 
 have the full $\R^n$ as their domains and the full $M$ as its codomains, while
the map $(\psi_{p_i}^{g_0})^{-1}$ has  $B_{r_0}^{g_{s_i}}(p_i)$ as its domain and 
 $$
 U_i: =(\psi_{p_i}^{g_0})^{-1}(B_{r_0}^{g_{s_i}}(p_i)) \subset \R^n
 $$
 as its codomain. This $U_i$  is an open neighborhood of $0 \in \R^n$. Combining these,
 we conclude that $F_i$ is defined on 
 $$
(I_{p_i}^{g_{s_i}})^{-1}(B_{r_0}^{g_{s_i}}(p_i)) \subset \R^n
$$
whose image is contained in $U_i \subset  \R^n$. Here both domains and codomains are 
neighborhoods of $0 \in \R^n$ respectively.

Since we have $g_{s_i} \to g_{s_0} = g$  in $C^\infty$ topology 
and $d((\psi_{p_i}^{g_{s_i}})^{-1}\circ  \psi_{p_i}^{g_{s_i}})(0) = \id$ by Lemma \ref{lem:<epsilons},
we can find a common domain, say, $B^n(r_0/2)$ such that
$$
(I_{p_i}^{g_{s_i}})^{-1}(B_{r_0}^{g_{s_i}}(p_i)) \supset B^n(r_0/2)
$$
and that we have
$$
U_i  \subset B^n(r_0)
$$
for all sufficiently large $i$'s: Here the constant $r_0 > 0$ is the one chosen in 
\eqref{eq:r0} depending only on the reference metric $g_{\text{\rm rf}}$ after
suitably shrinking it. In summary, we have achieved the following for the maps $F_i$:
\begin{itemize}
\item We have well-defined maps $F_i: B^n(r_0/2) \to B^n(r_0) $ for all
suffciently large $i$'s,
\item We have a uniform bound $\|D^2 F_i\| < K$ for some $K > 0$ independent of $i$.
\end{itemize}
This enables us to apply Theorem \ref{thm:IFT} to the maps $F_i$.

\begin{lem}\label{lem:IFT-condition}
All $F_i$ satisfy the hypotheses of
Theorem \ref{thm:IFT} with uniform constants $L, \ M$ and $R$.
\end{lem}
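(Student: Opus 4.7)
The plan is to verify each hypothesis of Theorem \ref{thm:IFT} with constants independent of $i$, exploiting the uniform curvature and derivative-of-curvature bounds established in Theorems \ref{thm:Rs-bounds} and \ref{thm:DRs-bounds}.

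First, I would handle the isomorphism condition and the estimates on $L = \|DF_i(0)\|$ and $M = \|DF_i(0)^{-1}\|$. Since $\psi_{p_i}^{g_{s_i}}(0) = p_i = \psi_{p_i}^{g_0}(0)$ and $d \exp^g_p(0) = \id_{T_pM}$ for any metric $g$, the chain rule gives
\[
DF_i(0) = (I_{p_i}^{g_0})^{-1} \circ I_{p_i}^{g_{s_i}} : \R^n \to \R^n.
\]
This is the frame-change matrix between two orthonormal frames at $p_i$ with respect to $g_0$ and $g_{s_i}$, and its operator norm and that of its inverse are controlled by the quasi-isometric ratio $A(g_0, g_{s_i})$. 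By Proposition \ref{prop:A-bllipschitz-convex}, the convex sums $g_{s_i}$ are $A_0$-bilipschitz with $g_0$ for a constant $A_0 \geq 1$ depending only on the quasi-isometry constants of $g_0$ and $g_1$, so I may take $L = M = A_0$ independent of $i$.

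Next comes the main step: a uniform $C^2$-bound $\|D^2 F_i(x)\| \leq K$ on $B^n(r_0/2)$. Writing $F_i = (\psi_{p_i}^{g_0})^{-1} \circ \psi_{p_i}^{g_{s_i}}$, it suffices to bound the $C^2$-norms of $\psi_{p_i}^{g_{s_i}}$ in the ambient chart and of the inverse $(\psi_{p_i}^{g_0})^{-1}$; the latter reduces to a two-sided bound on $D\psi_{p_i}^{g_0}$ of the type just obtained. For the $C^2$-bound on the exponential map, I would pass to the geodesic flow formulation $\psi^g_p(v) = \pi \circ \phi^1_{K_g}(p, I_p^g v)$ and analyze the flow of \eqref{eq:geodesic-flow}. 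Standard ODE estimates (variation-of-constants for the first and second variational equations) bound the first two derivatives of the flow in terms of $\|\Gamma^j_{k\ell}\|_{C^0}$ and $\|\Gamma^j_{k\ell;m}\|_{C^0}$, both of which are uniformly bounded by Lemma \ref{lem:Gamma-bound} once uniform control over $\|R_{g_{s_i}}\|_{C^0}$ and $\|DR_{g_{s_i}}\|_{C^0}$ is available. The former bound is provided by Theorem \ref{thm:Rs-bounds} and the latter by Theorem \ref{thm:DRs-bounds}, applied to the convex sums $g_{s_i}$ with constants depending only on the $C^1$-bounds of $R_{g_0}, R_{g_1}$ (hence on the hypotheses of Theorem \ref{thm:iota-bounds}). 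This produces a constant $K = K(C, A_0, r_0)$ with $\|D^2 F_i(x)\| \leq K$ for all $x \in B^n(r_0/2)$ and all $i$.

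Finally, the domain condition is immediate: $B^n(r_0/2)$ is a convex open ball and $\overline{B^n(r_0/2)} \subset B^n(r_0) = U$, so I may take $R = r_0/2$, which is intrinsic to the reference metric $g_{\text{rf}}$ and independent of $i$. Assembling these, the constants $(L, M, K, R)$ are uniform in $i$, completing the verification. The essential difficulty, and the reason the $C^3$-tameness hypothesis appears in Theorem \ref{thm:iota-bounds}, is precisely the uniform second-derivative estimate on the exponential map: without a uniform bound on $\|DR_{g_{s_i}}\|_{C^0}$, the second variational equation for the geodesic flow loses control, the constant $K$ can blow up with $i$, and the quantitative inverse function theorem yields no positive lower bound on the radius of injectivity.
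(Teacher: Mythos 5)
Your proposal is correct and follows essentially the same route as the paper: the paper reduces Lemma \ref{lem:IFT-condition} to the uniform $C^2$-bound on $F_i$ (Lemma \ref{lem:2nd-derivative}), proved by adapting Cheeger's Lemma 4.3 via the Christoffel-symbol bounds of Lemma \ref{lem:Gamma-bound}, with the $L$, $M$, $R$ estimates coming from quasi-isometry exactly as you argue; your ``first/second variational equations of the geodesic flow'' language is the Jacobi-field mechanism behind Cheeger's argument. The only cosmetic slip is that the frame-change matrix $DF_i(0) = (I_{p_i}^{g_0})^{-1}\circ I_{p_i}^{g_{s_i}}$ has operator norm bounded by $\sqrt{A_0}$ rather than $A_0$, which does not affect uniformity.
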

\begin{proof}
The proof is an adaptation of Cheeger's proof of \cite[Lemma 4.3]{cheeger:finiteness}.
We postpone the detail of the proof till Appendix \ref{sec:IFT-condition}. This is another place
where the bound for the derivative of the curvature and so the $C^3$ bound for the metrics 
explicitly enter. (See Lemma \ref{lem:2nd-derivative}.)
\end{proof}

This implies that $F_i$ is injective on $B^n(C' r_0) $
for some sufficiently small $C' > 0$ independent of $i$'s. This in turn implies that the maps
$\psi_{p_i}^{g_{s_i}}$ are injective and so are $\exp_{p_i}^{g_{s_i}}$ on $B_{p_i}^{g_{s_i}}(C' r_0)$.

On the other hand by the standing hypothesis \eqref{eq:standing-hypothesis},
the equality
$\gamma_{i,p_i}^+(\ell_i^+) = \gamma_{i,p_i}^-(\ell_i^-)$ is equivalent to
$$
\exp_{p_i}(\ell_i^+v_i^+) = \exp_{p_i}(\ell_i^-v_i^-)
$$
with $\max\{\ell_i^+, \ell_i^-\} < C' r_0$ eventually as $i \to \infty$ which contradicts to
the injectivity of $\exp_{p_i}$ on $B_{p_i}(C' r_0)$.
Therefore Case (2) cannot occur either if we choose $r_0$ in $0 < r_0 < \frac14$ sufficiently small.

Therefore we have proved $\iota_{g_{s_0}} > 0$ by contradiction.
In conclusion, we have shown $B = [0,1]$.

\subsection{Uniform injectivity radii lower bound over $[0,1]$}
\label{subsec:uniform_lower_bound}
By Proposition \ref{prop:one-one}, for every $s\in B=[0,1]$, there exist $\delta(s),\,\epsilon(s)>0$ such that 
$$
\iota_{g_{s'}}> \epsilon(s)
$$
for all $s' \in (s-\delta(s),s+\delta(s)) \cap [0,1]$. 
Note that $\{(s-\delta(s),s+\delta(s)) \cap [0,1]\}_{s\in[0,1]}$ is an open cover of $[0,1]$.
Since  $[0,1]$ is compact, it has a finite open subcover, say,
$$
\{(s_1-\delta(s_1),s_1+\delta(s_1)) \cap [0,1],\dots,(s_m-\delta(s_m),s_m+\delta(s_m)) \cap [0,1]\}. 
$$
This implies
$$
\iota_{g_{s}}> \min\{\epsilon(s_1),\dots,\epsilon(s_m)\}>0
$$
for all $s\in[0,1]$. Then $\min\{\epsilon(s_1),\dots,\epsilon(s_m)\}$ is the desired lower bound $\epsilon'$ and we finished the proof of
Theorem \ref{thm:iota-bounds}.

\begin{rem}
 It is an interesting question whether one can explicitly estimate the injectivity radius
of convex sum $g_s$ in terms of the curvature bounds and the injectivity radii of $g_0, \, g_1$.
 There have been studies of injectivity radii
under the curvature estimates and the estimate of volume growth of geodesic balls in the literature.
(See \cite{cheng-li-yau}, \cite{CGT} to name a couple.)
These articles provide the estimates of the decay rate of injectivity radius of a
point as the point diverges to infinity but no uniform lower bound away from zero.
This is the reason why we have used the argument by contradiction as above
since there is no mention of volume growth of $g_0$ or $g_1$ in our circumstance.
In addition, there does not seem to be an easy way of formulating such a volume growth
 in our circumstance either, if possible at all.
\end{rem}

\section{Contractibility of the set $\Riem_{\mathfrak T}(M)$ of $C^r$-tame metrics for $r \geq 3$}
\label{sec:contractibility-g}

In the present section, we will prove Theorem \ref{thm:contractibility-intro}.

We first recall the definition of strong $C^k$ topology of $\Riem(M)$.
Let $g_{\text{\rm rf}} \in {\mathfrak T}$ be a fixed reference metric which will be
used for the study of $C^r$ topology on $\Riem_{\mathfrak T}(M)$. We also fix the associated
Levi-Civita connection of $g_{\text{\rm rf}} \in {\mathfrak T}$ denoted by $\nabla$. 
We will denote by $D$ the associated
covariant derivative applied to general tensor fields on $M$. We use 
$g_{\text{\rm rf}} \in {\mathfrak T}$ and $D$ to define the $C^k$-norms of general
tensor fields on $M$. In particular, we will focus on the metrics in the given quasi-isometry class $\mathfrak T$
\be\label{eq:ginRiemTM}
g \in \Riem_{\mathfrak T}(M).
\ee
A neighborhood basis element of $C^k$ (resp. $C^\infty$) topology
of $\Riem_{\mathfrak T}(M)$  (with respect to $g_{\text{\rm rf}}$)
at a given metric $g_0$ is given by the set of metrics 
$$
\CB_{\{\epsilon_i\}}(g_{0}) = \{g \in \Riem_{\mathfrak T}(M) \mid \|g-g_0\| < \epsilon_0, \ldots, 
\|D g\| \leq \epsilon_1, \ldots \|D^k g\| < \epsilon_r\}
$$
for a sequence $\epsilon_0, \ldots, \epsilon_k$ (resp. $\epsilon_0, \ldots, \epsilon_i, \ldots $) with $\epsilon_k > 0$.
Here $\|\cdot\|$ denotes the strong $C^0$-norm  with respect to the reference metric $g_{\text{\rm rf}}$
 of the space of sections
$$
\Gamma(\text{\rm Sym}^2(TM)) \supset \Riem_{\mathfrak T}(M)
$$
of symmetric 2-covariant tensor fields and is given by
$$
\|S\|: = \sup_{x \in M} |S(x)|
$$
for $S \in \Gamma(\text{\rm Sym}^2(TM))$.

Now we are ready to give the proof of the following 
\begin{theorem}[Contractibility in strong $C^k$ topology]\label{thm:Cr-contractibility} Let $r \geq 3$.
Let $\mathfrak T$ be any quasi-isometry class 
of $C^r$-tame Riemannian metrics on $M$ and $g_{\text{\rm rf}} \in \Riem_{\mathfrak T}(M)$. 
Then the function
\be\label{eq:L}
 L: [0,1] \times \Riem(M) \to \Riem(M)
\ee
defined by $L(s,g) = (1-s) g + s g_{\text{\rm rf}}$ restricts to
$$
 L: [0,1] \times \Riem_{\mathfrak T}(M) \to \Riem_{\mathfrak T}(M)
$$
as a continuous map, and so $\Riem_{\mathfrak{T}}(M)$ is contractible in strong $C^k$ topology
for any finite $0 \leq k < \infty$.
\end{theorem}
\begin{proof} We have already shown strict convexity of $\Riem_{\mathfrak T}(M)$ 
which implies the first statement. 
Therefore it is enough to prove the continuity of the map $L = L(s,g)$ on
$$
[0,1] \times \Riem(M).
$$
\emph{Here we highlight  the factor $\Riem(M)$}:
the aforementioned convexity enables us to work with  the factor $\Riem(M)$
is used for the study of continuity, not $\Riem_{\mathfrak T}(M)$, in the rest of the proof.

Let $(s_0, g_0) \in [0,1] \times \Riem(M)$, and denote by $g_{s_0}$ the metric given by
$$
g_{s_0}: = L(s_0,g_0) = (1-s_0) g_0 + s g_{\text{\rm rf}} 
$$
for the simplicity of notation. Recall that any neighborhood basis element 
of the strong $C^r$ topology of $\Riem(M)$ at $g_{s_0}$ is given by the collection
$$
\CB = \CB_{\{\epsilon_i\}}(g_{s_0}) 
$$
associated to the set $\{ \epsilon_i\} = \{\epsilon_i\}_{0 \leq i \leq r}$ of constants $\epsilon_i > 0$.
Therefore we need to prove that for each given such $\CB$ the preimage
$$
L^{-1}(\CB)
$$
is open in $[0,1] \times \Riem(M)$. 

By the definition of product topology of $[0,1] \times \Riem(M)$,
 we need to show that there exists $\delta > 0$ 
and a neighborhood basis element $\CB_{\{\delta_i\}}(g_0)$ of $g_0$ with $\delta_i$
 for $i = 0, \cdots, k$ such that 
$$
L((-\delta + s_0, \delta + s_0) \times \CB_{\{\delta_i\}}(g_0)) \subset \CB.
$$
In other words, we need to find $\delta$ and $\delta_i$ for $i = 0, \cdots, k$ such that
$$
\|D^k (L(s,g) - L(s_0,g_0))\|_{B_\alpha(r)} \leq \epsilon_k, \quad g_{s_0} = L(s_0,g_0)
$$
for all $s \in (-\delta + s_0, \delta + s_0)$ and $g \in \CB_{\{\delta_i\}}(g_0)$.
We have
\beastar
D^k (L(s,g) - L(s_0,g_0)) & = & D^k (L(s,g) - L(s,g_0)) + D^k (L(s,g_0) - L(s_0,g_0))\\
& = & (1-s)D^k(g - g_0) + (s-s_0)D^k(g_0).
\eeastar
By the assumption from \eqref{eq:ginRiemTM}
that $g_{s_0} \in {\mathfrak T}$ and $C^3$-tame to $g_{\text{\rm rf}}$, we can 
apply Theorem \ref{thm:iota-bounds} and cover  $M$ by an atlas 
$\{B_\alpha(r)\}$ of balls of uniform size of radii $r = r(B_\alpha) \geq \varepsilon > 0$
for some $\varepsilon > 0$. We may choose them so that $\{B_\alpha(r/2)\}_\alpha$ still covers $M$.

First we choose $\delta_i, \, i = 0, \cdots, k$ so that 
$$
\max \big\{ \|g - g_0\|_{B_\alpha(r)},  \{\|D^i(g - g_0)\|_{B_\alpha(r)}\}_{i=1}^k \big \}
\leq \frac12 \varepsilon 
$$
for all $\alpha$ and $g \in \CB_{\{\delta_i\}}(g_0)$.
Then we choose $\delta > 0$ so that
$$
\delta \max_{i =0, \cdots, k} \epsilon_i \leq \frac12 \varepsilon.
$$
Then for all $\alpha$, we obtain
\beastar
&{}& \|D^i (L(s,g) - L(s_0,g_0))\|_{B_\alpha(r)} \\
& \leq  &  (1-s)\| D^i(g - g_0)\|_{B_\alpha(r)}  + (s-s_0)\|D^i(g_0)\|_{B_\alpha(r)} \\
 & \leq &  \frac12 \varepsilon + \frac12 \varepsilon = \varepsilon, 
\eeastar
i.e., $L(s,g) \in \CB$  for all $g \in \CB_{\{\delta_i\}}(g_0)$ and for all $s$ with $|s-s_0| < \delta$. This 
finishes the proof of the continuity of $L$.
\end{proof}

\begin{rem}\label{rem:direct-limit-topology}
This proof clearly shows why it cannot be applied to the usual $C^\infty$ case of infinite regularity,
but the proof can be adapted to the \emph{direct limit strong $C^\infty$ topology} introduced in
Appendix \ref{sec:direct-limit}.
\end{rem}

\section{No cusp-developing under the $C^3$-continuation of tame metrics}

In this section, we prove Theorem \ref{thm:noncollapsing-intro} which we restate here.

\begin{theorem}\label{thm:no-cusp} Consider any continuous family $g_s$
in strong $C^3$ topology of complete Riemannian metrics with
$$
\|R_{g_0}\|_{C^0}, \, \|DR_{g_0}\|_{C^0}  < C, \quad \iota_{g_0} > \epsilon.
$$
Then there is a constant $C' = C'(C, \{g_s\}), \, \epsilon' = \epsilon'(C, \epsilon, \{g_s\}) > 0$
such that
\be\label{eq:boundsforg_s}
\inf_{s \in [0,1]} \iota_{g_s} > \epsilon'
\ee
\end{theorem}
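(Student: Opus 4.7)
The plan is to mimic the topology argument used in the proof of Theorem \ref{thm:iota-bounds}, replacing the convex sum $g_s = (1-s)g_0 + sg_1$ by the given continuous family. Since only $g_0$ is assumed tame, the first step is to upgrade the $C^3$-continuity of $s \mapsto g_s$ together with compactness of $[0,1]$ into uniform curvature estimates along the family. Because the Riemann tensor and its covariant derivative are algebraic--differential expressions in $g_s$ of order at most three (in coordinates, polynomials in $g_{ij}$, $g^{ij}$, $\partial g_{ij}$, $\partial^2 g_{ij}$, $\partial^3 g_{ij}$), continuity of $s \mapsto g_s$ in the strong $C^3$ topology and the compactness of the parameter interval give a constant $C^\ast = C^\ast(C,\{g_s\})$ with
\begin{equation*}
\sup_{s \in [0,1]} \max\bigl\{\|R_{g_s}\|_{C^0},\,\|DR_{g_s}\|_{C^0}\bigr\} \leq C^\ast.
\end{equation*}
In particular, Lemmata \ref{lem:cheeger} and \ref{lem:Gamma-bound} apply uniformly in $s$ on any geodesic normal chart of a uniformly fixed reference metric $g_{\text{\rm rf}}$, so the maps $\psi_p^{g_s}$ of \eqref{eq:psipg} have uniform $C^2$-bounds independent of both $p \in M$ and $s \in [0,1]$.

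Next, set $B = \{\, s \in [0,1] : \iota_{g_s} > 0 \,\}$. By hypothesis $0 \in B$. I would prove $B$ is open exactly as in Proposition \ref{prop:one-one}: given $s \in B$, the strong $C^3$-continuity forces $(\psi_p^{g_s})^{-1}\circ \psi_p^{g_{s'}} \to \id$ in $C^1$ on a fixed ball $B^n(r_s/2)$ uniformly in $p$ as $s' \to s$, and this gives a uniform $\delta(s),\epsilon(s)>0$ with $\iota_{g_{s'}} > \epsilon(s)$ for $|s'-s|<\delta(s)$. I would prove $B$ is closed by essentially reproducing the argument of Section \ref{subsec:closedness}: if $s_n \to s_0$ with $s_n \in B$ but $\iota_{g_{s_0}} = 0$, then either $\inf_n \iota_{g_{s_n}} > 0$, in which case the openness argument applied at the $s_n$'s forces $\iota_{g_{s_0}} > 0$, or $\iota_{g_{s_n}} \to 0$, in which case one obtains basepoints $p_n$ and distinct unit tangent vectors $v_n^{\pm}$ producing short geodesic loops at $p_n$ of length $\ell_n^{\pm} \to 0$. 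This possibility is ruled out by applying the quantitative inverse function theorem (Theorem \ref{thm:IFT}) to
\begin{equation*}
F_n(x) := (\psi_{p_n}^{g_{s_0}})^{-1}\!\bigl(\psi_{p_n}^{g_{s_n}}(x)\bigr),
\end{equation*}
whose $C^2$-hypotheses hold uniformly in $n$ by the first paragraph (this is Lemma \ref{lem:IFT-condition} adapted to the present setting, whose proof requires nothing beyond a uniform $\|R_{g_s}\|_{C^1}$ bound in $s$).

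Once $B = [0,1]$ is established, the uniform lower bound \eqref{eq:boundsforg_s} follows by the Heine--Borel argument of Section \ref{subsec:uniform_lower_bound}: the open cover $\{(s-\delta(s),s+\delta(s))\cap[0,1]\}_{s \in [0,1]}$ admits a finite subcover $\{(s_j-\delta(s_j),s_j+\delta(s_j))\cap[0,1]\}_{j=1}^m$, and then $\epsilon' := \min_j \epsilon(s_j) > 0$ does the job.

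The main obstacle is the closedness step, specifically verifying that the quantitative IFT input (Lemma \ref{lem:IFT-condition}) still applies when $g_{s_n}$ is not given as a convex combination. However the proof of Lemma \ref{lem:IFT-condition} does not use the convex-sum structure: it only needs a uniform $C^2$ bound on the maps $\psi_{p_n}^{g_{s_n}}$, equivalently a uniform $C^1$ bound on the Christoffel symbols of $g_{s_n}$ in geodesic normal coordinates of the reference metric, which in turn follows from the uniform $C^1$ bound on $R_{g_{s_n}}$ obtained in the first paragraph. Thus the argument transfers with only cosmetic changes, and the $C^3$-regularity assumption on the family is used precisely at this point.
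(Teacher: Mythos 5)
Your overall strategy exactly mirrors the paper's: upgrade $C^3$-continuity of $s\mapsto g_s$ plus compactness of $[0,1]$ to a uniform $\|R_{g_s}\|_{C^1}$ bound along the family; run the openness/closedness argument on $B=\{s:\iota_{g_s}>0\}$ using the quantitative IFT (Theorem~\ref{thm:IFT} and Lemma~\ref{lem:IFT-condition}); then extract $\epsilon'$ by Heine--Borel as in Subsection~\ref{subsec:uniform_lower_bound}. Your observation that Lemma~\ref{lem:IFT-condition} does not actually use the convex-sum structure, only a uniform $\|R_{g_s}\|_{C^1}$ bound, is the key point and it is correct.

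There is, however, a genuine slip in the closedness step: you define
$$
F_n(x) := (\psi_{p_n}^{g_{s_0}})^{-1}\bigl(\psi_{p_n}^{g_{s_n}}(x)\bigr),
$$
taking the limiting metric $g_{s_0}$ as the reference chart, whereas the paper (both in Subsection~\ref{subsec:closedness} and in the proof of Theorem~\ref{thm:no-cusp}) uses the \emph{initial} tame metric $g_0$, i.e.\ $F_n := (\psi_{p_n}^{g_0})^{-1}\circ\psi_{p_n}^{g_{s_n}}$. This difference matters for two reasons. First, $(\psi_{p_n}^{g_{s_0}})^{-1}$ is only defined on a ball around the origin of radius bounded by $\inj_{g_{s_0}}(p_n)$, and in the case you are trying to rule out this quantity tends to $0$; so $F_n$ as you define it need not be a map on a domain of fixed size, and Theorem~\ref{thm:IFT} cannot be applied with uniform constants. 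Second, even if it were defined, injectivity of $F_n$ gives injectivity of $\psi_{p_n}^{g_{s_n}}=\psi_{p_n}^{g_{s_0}}\circ F_n$ only if $\psi_{p_n}^{g_{s_0}}$ is itself injective on the image of $F_n$, which is again exactly what is in question. Replacing $g_{s_0}$ by $g_0$ repairs both points at once: $\iota_{g_0}>\epsilon$ by hypothesis, so $(\psi_{p_n}^{g_0})^{-1}$ is a uniformly bi-Lipschitz chart on $B^n(r_0)$, the maps $F_n\colon B^n(r_0/2)\to B^n(r_0)$ are well-defined with uniform $C^2$ bounds, and injectivity of $F_n$ together with injectivity of $\psi_{p_n}^{g_0}$ yields the desired lower bound $\inj_{g_{s_n}}(p_n)\geq c\,r_0$, contradicting $\inj_{g_{s_n}}(p_n)\to 0$. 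With this single substitution your argument coincides with the paper's proof.

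One small aside: the theorem statement in the paper reads ``strong $C^2$ topology'' but its proof (and the version in the introduction, Theorem~\ref{thm:noncollapsing-intro}) require $C^3$; your use of $C^3$-continuity in the first paragraph is what is actually needed and matches the paper's intent.
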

\begin{proof} We first note that since $g_s$ is $C^3$-continuous and $[0,1]$ is compact, the uniform curvature
bound immediately follows by continuity and compactness. The quasi-isometric equivalence
 between $g_0$ and $g_1$ is also
an immediate consequence thereof. It remains to show the
injectivity radii lower bound. 

Let $B \subset [0,1]$ be the subset consisting of $s$'s for which $\iota_{g_s} > 0$. 
The openness of $B$ immediately follows from the curvature derivative bound and
the $C^3$-continuity of the map $s \mapsto g_s$. 
The proof of the lower bound goes exactly the same as the proof of closedness
in the previous section, which relies on the completeness of 
the metrics and the quantitative inverse function theorem, Theorem \ref{thm:IFT},
especially the injectivity statement \emph{under the uniform $C^3$ bound}.

Again we would like to apply Theorem \ref{thm:IFT}
to the sequence of metrics $g_{s_i} \to g_{s_0} = g$ by considering the associated maps
$$
F_i: B^n(r_0/2) \to B^n(r_0)
$$
with fixed domain and codomain given by
\be\label{eq:Fi2}
F_i(x): = (\psi_p^{g_0})^{-1}( \psi_p^{g_{s_i}} (x))
\ee
on $B^n(r_0/2)$ as before. By the given hypothesis of the derivative curvature bound and 
the $C^3$-continuity of the map $s \mapsto g_s$, we can again apply 
Lemma \ref{lem:IFT-condition}.
This implies that $F_i$ is injective on $B^n(C' r_0) $
for some $C' > 0$ independent of $i$'s which again  implies that 
$\exp_p^{g_{s_i}}$ is injective on $I_p^{g_{s_i}}(B^n(C' r_0)) \subset T_pM$.

Once we have achieved this far, exactly the same proof by contradiction as 
in the proof of Theorem \ref{thm:iota-bounds}, especially the closedness part
thereof given in the previous Subsection \ref{subsec:closedness} applies to 
derive a contradiction which in turn implies that $B = [0,1]$.
This finishes the proof.
\end{proof}

Our proof of the existence of a uniform injectivity lower bound is not direct in that
it does not provide an explicit bound in terms of the given geometric bound.
In our proof, it is essential to consider a one-parameter family
$g_s$ for $s \in [0,1]$ of complete metrics of uniformly bounded curvature.
In this sense our result is a deformation result.
The hypothesis that \emph{one of the initial metrics $g_0$ and $g_1$ has
bounded curvature and injectivity lower bound away from zero}, which
provides a covering of the manifold by a uniform size geodesic normal
coordinate charts. See \cite{cheeger:finiteness} for the importance of covering the manifold.

\begin{rem} According to the injectivity lower bound formula 
from \cite[Equation (4.23)]{CGT}
it will be enough to have a \emph{lower bound} for the volume growth
$V_r^{g_s}(p): = \vol_{g_s}(B_{r}^{g_s}(p))$ over $s \in [0,1]$. (See \cite[Section 4]{CGT}
for some detailed discussion on the relationship between the volume lower bound
and the injectivity lower bound on complete open Riemannian manifold.)
It would be interesting to examine whether the uniform lower bound of the
volume $V_r^{g_s}(p)$ exists for some choice of $p \in M$ and $r > 0$ depending on the
given one-parameter family $\{g_s\}$ starting from $g_0$ in our situation.
\end{rem}

\part{Application to large-scale symplectic topology}

\section{{$\CJ_\omega$} is not connected in strong $C^\infty$ topology}
\label{sec:not-connected}

We first recall the standard definition of almost complex structures tame to
a symplectic form in general as formulated by Sikorav \cite{sikorav:holo}.

\begin{defn}\label{defn:tame-J}
Let $(N,J,g)$ be an almost complex manifold equipped with a Riemannian metric
$g$. We say the triple is \emph{$C^r$-tame} if the following hold:
\begin{enumerate}
\item $g$ is complete.
\item $g$ has bounded curvature and injectivity radius bounded away from zero.
More precisely, there exist constants $C_0 > 0$ and $\varepsilon_0 > 0$ such that
its curvature $R_g$ satisfies $\|R_g\|_{C^{k-2}} \leq C_0 < \infty$ and
and its injectivity radius $\iota_g$ satisfies $\iota_g \geq \varepsilon_0 > 0$.
\item $J$ is uniformly continuous with respect to $g$.
\end{enumerate}
\end{defn}

Condition (3) is usually rephrased into the statement that there is a tame metric $g$ and constant $C> 0$
such that \eqref{eq:A-quasiisometry} holds for the pair of metrics with $(g,g_J)$ and
with $A = C$.

\begin{rem}
\begin{enumerate}
\item 
It appears that Sikorav's formulation \cite{sikorav:holo} starting from a
Riemannian metric should be the way how one should introduce
the concept of \emph{almost complex structures tame to a symplectic form}
in strong $C^\infty$ topology \emph{by remembering its quasi-isometry class 
of the associated metric until the end, not forgetting away along the way}.
\item As shown in Part 1 of the present paper,  we need at least the $C^3$-tameness
to be able show that the set of tame almost complex structures is contractible.
See Subsection \ref{subsec:gJ-tame} for the reason why.
\end{enumerate}
\end{rem}
\emph{Since the regularity is not the main issue of the present part, we will
always assume that the triple $(N,J,g)$ is $C^\infty$-tame in the rest of the paper
without further mentioning.}

Recall that the notion of $J$-holomorphic curves $u$ is nothing but the same as
the almost complex curves $u: (\Sigma,j) \to (N, J)$ which can be defined
 for any almost complex manifold.

\begin{defn} Let $(M,\omega)$ be a symplectic manifold. An almost complex structure $J$ on
$M$ is \emph{compatible to $\omega$} if the following hold:
\begin{enumerate}
\item The bilinear form $g_J: = \omega(\cdot, J \cdot)$ is  symmetric positive definite.
\item The triple $(M,J,g_J)$ is tame in the sense of Definition of \ref{defn:tame-J}.
\end{enumerate}
We denote by $\CJ_\omega$
the set of $\omega$-compatible almost complex structures and call the metric $g_J$
an $\omega$-tame metric.
\end{defn}

More generally, we say $J$ is \emph{tame to $\omega$} if Condition (1) is weakened by
omitting the symmetry of the bilinear form $\omega(\cdot, J \cdot)$: To any tame
$\omega$, we have a canonically defined Riemannian metric given by symmetrizing
the bilinear form, again denoted by $g_J = g_{(\omega,J)}$,
$$
g_J(v_1,v_2): = \frac{\omega(v_1,Jv_2) + \omega(v_2,Jv_1)}{2}
$$
Recall that $\CJ_\omega$ carries the structure of an infinite dimensional
smooth Fr\'{e}chet manifold the tangent space of which can be written as
\bea\label{eq:TJCJ}
T_J \CJ_\omega & = & \{ B \in \Gamma(\text{\rm End}(E)) \mid
 BJ + JB = 0,  \, \omega(B(\cdot), J (\cdot))
 + \omega(J(\cdot), B(\cdot)) =0 \}
 \nonumber\\
\eea
where the second equation means nothing but that $B$ is
a symmetric endomorphism of the metric $g_J=\omega(\cdot, J \cdot)$
on $\xi$. (See \cite{floer:unregularized}.)

With this preparation, Gromov's proof of connectedness of $\CJ_\omega$
for the compact case goes as follows.
By definition, $\CJ_\omega$ can be expressed as the space of smooth sections of the fiber bundle
\be\label{eq:Slambda}
S_\omega \to M
\ee
whose fiber is given by $S_{\omega,x}$ which is isomorphic to
$$
 S(\R^{2n}): = \{J_0 \in \text{\rm Aut}(\R^{2n}) \mid
J_0^2 = -\text{\rm Id},  \, - J_0 \Omega_0 J_0 \, \text{\rm is positive definite}  \}
 $$
where $\Omega_0$ is the matrix associated to the standard symplectic
bilinear form on $\R^{2n}$. When identified with $\R^{2n} \cong \C^n$,
$\Omega_0$ corresponds to the complex multiplication by $\sqrt{-1}$ with
the identification $\R^{2n} \cong \C^n$. Then this set is contractible.
Gromov then concludes in \cite[Corollary 2.3]{gromov:invent} that
\emph{$\CJ_\omega$ is contractible}.

For the noncompact case, the same proof still applies in the
strong $C^3$ topology by the result from Part 1
as long as the metric $g_J: = \omega(\cdot, J\cdot)$ is
contained in the given quasi-isometric class of Riemannian metrics.
Therefore for the Fukaya category recently constructed in
\cite{choi-oh:construction} to be regarded as a symplectic invariant,
one must restrict to the deformations of $J$'s that are
continuous  in strong $C^r$ topology  with $3 \leq r < \infty$.

Now comes one natural question arises: \emph{Are two metrics tame to the given
symplectic form $\omega$ quasi-isometric?}
The answer to this question is simply \emph{no} as the following 2 dimensional example shows.

\begin{exam}\label{exam:2-dim}
Consider the plane $(\R^2, \omega_0)$ with the standard symplectic form
$\omega_0$. Consider two Riemannian metrics, $g_0$ the standard flat metric
on $\R^2$ and the other the metric $g_1$ of the \emph{cigar}
$$
\{(x,y,z) \mid x^2 + y^2 = 1, \, z \geq 0\} \cup \{(x,y,z) \mid x^2 + y^2 + z^2 =1,
\, z \leq 0\}  \subset \R^3
$$
with suitable smoothing thereof along the seam $z = 0$.

Obviously they are not quasi-isometric. Both have bounded curvatures and 
injectivity radii bounded away from zero. Denote by
$\omega_0 = \omega_{\text{\rm flat}}$ and $\omega_1 =\omega_{\text{\rm cigar}}$
the associated Riemannian area forms.
Since both have infinite volume, there is a diffeomorphism
$$
\psi: (\R^2,\omega_0) \to (\R^2,\omega_1)
$$
such that $\psi^*\omega_1 = \omega_0$ by \cite{Greene-Shiohama}.

Then the associated Riemannian metrics become
$$
\omega_0(\cdot, J_0 \cdot) = g_0, \quad \omega_0(\cdot, J_1 \cdot) = g_1,
$$
Therefore both $J_0$ and $J_1$ are tame to $\omega_0$ but their associated metrics
are not quasi-isometric.
\end{exam}
The main purpose of the present part is to identify a subset of the set $\CJ_\omega$
such that for any two $J_1, \, J_2$ in the subset the associated metrics $g_{J_1}$ and $g_{J_2}$ are
quasi-isometric.

We first recall the definition of strong $C^r$ topology of $\CJ_\omega$.
(See Appendix for the definition of strong $C^r$ topology (resp. $C^\infty$ topology) on the
space of sections of general fiber bundle.)
Once this class of subsets is identified, the proof of disconnectedness will
follow from the fact that the strong $C^\infty$ topology of
$C^\infty(M,S(\R^{2n}))$ is not path-connected. (See \cite{hirsch}.)

We will use the following standard notion in the comparison geometry for the study of this 
path-connectedness.
\begin{defn} \label{defn:quasiisometric-ratio}
For a given pair of Riemannian metrics $g_1, \, g_2$, we define the function 
$M_{g_1,g_2}: M \to \R_+$ given by
$$
M_{g_1,g_2}(x) = \sup_{0 \neq v \in T_x M} \left|\log\left(\frac{|v|_{g_2}}{|v|_{g_1}}\right)\right|
$$
and call the \emph{quasi-isometric ratio function of $g_1$ and $g_2$.}
\end{defn}
It follows that $M_{g_1,g_2}$ is a continuous function on $M$, and its definition
depends only on the $C^0$-values of $g_1, \, g_2$ on the tangent bundle $TM$.

We now prove the following.

\begin{prop} \label{prop:not-connected}
Let $(M,\omega)$ be a smooth noncompact surface
equipped with an area form of infinite area, and denote by
$\CJ_\omega$ the set of almost complex structures tame to $\omega$
equipped with strong $C^\infty$ topology. 
Then $\CJ_\omega$ is not path-connected.
\end{prop}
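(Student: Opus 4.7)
My approach is to reduce the failure of path-connectedness of $\CJ_\omega$ in strong $C^\infty$ topology to the classical fact cited from Hirsch \cite{hirsch}, namely that the strong (Whitney) $C^\infty$ topology on $C^\infty(M, S(\R^{2n}))$ is not path-connected, combined with the existence (already advertised in Example \ref{exam:2-dim}) of two tame compatible almost complex structures on $(M,\omega)$ whose associated Riemannian metrics lie in distinct quasi-isometry classes. The key principle is that in the strong topology on $C^\infty(M,N)$ with $M$ noncompact, continuous paths are extremely rigid at infinity: the endpoints of a continuous path must agree outside some compact subset of $M$.

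The first step is to identify $\CJ_\omega$ as an open subset of the space of smooth sections of the fiber bundle $S_\omega \to M$ from \eqref{eq:Slambda}, equipped with the strong $C^\infty$ topology inherited from $C^\infty(M, S_\omega)$. The second step is to extract from Hirsch's result the following consequence at the level of path-components: if $J_0$ and $J_1$ lie in the same path-component of $\CJ_\omega$ in strong $C^\infty$ topology, then necessarily $J_0 = J_1$ outside some compact $K \subset M$. The third step is the elementary observation that if $J_0 = J_1$ outside $K$ then $g_{J_0} = g_{J_1}$ outside $K$, and since both metrics are tame and in particular smooth on the compact set $K$, their ratio is pointwise bounded there; hence $g_{J_0}$ and $g_{J_1}$ are globally quasi-isometric. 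Contrapositively, any two tame $J$'s whose associated metrics are not quasi-isometric must lie in distinct path-components.

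The fourth step is to exhibit such a pair on the given surface. In the case $M=\R^2$, Example \ref{exam:2-dim} already provides two tame compatible $J$'s whose metrics (the flat metric and the pullback of the cigar metric under a Greene--Shiohama symplectomorphism) are not quasi-isometric. For a general noncompact surface with area form of infinite total area, one adapts the construction: fix a tame $J_0$ with metric $g_{J_0}$, build a second tame Riemannian metric $g'$ on $M$ whose asymptotic geometry on one end differs from that of $g_{J_0}$ (e.g.\ introduce a cigar- or cusp-type end), then apply the Greene--Shiohama theorem to produce a volume-equalizing diffeomorphism $\psi\colon (M,\omega)\to (M,\omega')$ and pull back the corresponding almost complex structure to obtain $J_1 \in \CJ_\omega$ with $g_{J_1} = \psi^* g'$ not quasi-isometric to $g_{J_0}$. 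By the previous step, $J_0$ and $J_1$ lie in distinct path-components.

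The main obstacle will be the rigorous transfer, in the first two steps, of Hirsch's strong-topology result from the usual setting of maps between manifolds to smooth sections of the nontrivial fiber bundle $S_\omega \to M$, and the resulting characterization of path-components via ``agreement outside a compact subset.'' Once that characterization is in hand, the remaining work is essentially the construction in the fourth step, which is a direct application of Greene--Shiohama and the flexibility of tame Riemannian metrics on a noncompact surface of infinite area.
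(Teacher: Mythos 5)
Your argument is essentially correct but takes a genuinely different route from the paper's. The paper reduces to the space $\text{Riem}_\omega(M)$ of $\omega$-tame metrics, fixes two non-quasi-isometric tame metrics $g_1,g_2$ and a divergent sequence $x_n$ with $M_{g_1,g_2}(x_n)\to\infty$, and then declares $g\sim h$ iff $M_{g,h}(x_n)/M_{g_1,g_2}(x_n)\to 0$; a direct strong-basic-neighborhood computation shows each $\sim$-class is open, so $\text{Riem}_\omega(M)$ is actually \emph{disconnected}, which is strictly stronger than failure of path-connectedness. You instead invoke the rigidity of continuous paths in the strong topology on sections over a noncompact base: if $\gamma\colon[0,1]\to C^\infty_S(M,S_\omega)$ is continuous then the set $\{t:\gamma(t)=\gamma(0)\text{ off some compact}\}$ is open and closed (both parts follow from the sequential-convergence characterization of the strong topology: $f_n\to f$ iff $f_n=f$ off a fixed compact eventually and converges there), hence all of $[0,1]$; this forces $g_{J_0}=g_{J_1}$ off a compact and therefore global quasi-isometry, contradicting Example \ref{exam:2-dim}. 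Your route is shorter and more conceptual for the stated path-connectedness claim, and it cleanly isolates the responsible mechanism, but it only gives path-disconnectedness, whereas the paper's equivalence-relation argument yields genuine disconnectedness and is self-contained (it does not rely on the folklore path-rigidity fact, which is not literally stated as such in \cite{hirsch} and deserves the brief open-and-closed argument above if you rely on it). Both proofs share the same unelaborated existence step, namely producing a pair of non-quasi-isometric tame metrics on a general noncompact surface of infinite area; your Greene--Shiohama adaptation is a reasonable sketch and matches the level of detail the paper itself provides.
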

\begin{proof}
Denote by $\text{\rm Riem}_\omega (M)$ the set of $\omega$-tame Riemannian metrics 
$$
\text{\rm Riem}_\omega (M) = \{g_J \mid J \in \CJ_{\omega} \}
$$
similarly as defined in \eqref{eq:gJ}.
If $\CJ_\omega$ is path-connected, then $\text{\rm Riem}_\omega (M)$ also should be path-connected.
We will find a pair of disjoint connected components of $\text{\rm Riem}_\omega (M)$
which will then show that it is not connected.

Let $g_1,\,g_2$ be tame metrics. Consider the function $M_{g_1,g_2}$. 
Now suppose that $g_1,\,g_2$ are tame metrics which are not quasi-isometric.
Then we can find a sequence of points $x_1,x_2,...$ of $M$ so that 
$$
\lim_{n\to\infty}M_{g_1,g_2}(x_n)=\infty.
$$
Since two metrics on a compact set are always quasi-isometric, we may also assume that
there exists a compact exhaustion $C_1\subset C_2\subset \cdots$ of $M$ so that
\begin{itemize}
\item $C_m\subset \text{\rm Int}(C_{m+1})$ for all $m\in \N$ and
\item $\{x_n\}_{n\in\N} \cap C_m$ is finite for all $m\in \N$.
\end{itemize}

We now define an  equivalence relation  $\sim$ on $\text{\rm Riem}_\omega (M)$
by setting $g \sim h$ if and only if
\be\label{eq:n-to-infty}
\lim_{n\to\infty} \frac{M_{g,h}(x_n)}{M_{g_1,g_2}(x_n)}=0.
\ee
We can easily check that $\sim$ is an equivalence relation $\sim$ and so its equivalence classes
$\{E_\alpha\}_{\alpha \in I}$ for some indexing set $I$ partitions $\text{\rm Riem}_\omega (M)$ into a disjoint union
$$
\text{\rm Riem}_\omega (M) = \bigsqcup_{\alpha \in I} E_\alpha.
$$
We will show that each equivalence of  $\sim$ is open and
there are at least two nonempty equivalence classes, which will finish the proof.

\begin{lem} Each equivalence class is an open subset of $\text{\rm Riem}_\omega (M)$.
\end{lem}
\begin{proof} Pick $E_\alpha$ and  $g \in E_\alpha$. We will find   
 a strong basic open neighborhood $\CU$ of $g$ such that $h \sim g$
 i.e., \eqref{eq:n-to-infty} holds for all $h \in \CU$.  (see \cite{hirsch} for the definition
 of strong basic open neighborhoods).
 
For this purpose, let $\{(\varphi_i,U_i)\}_{i\in\Lambda}$ be a locally finite atlas of $M$ for some
countable indexing set $\Lambda$.
Note that
$$
\{\text{\rm Int}(C_2),\text{\rm Int}(C_3)\cap {C_1}^c,
\text{\rm Int}(C_4)\cap {C_2}^c,\dots\}
$$
is a  locally finite open cover of $M$ and taking intersection with any locally finite atlas
gives us a locally finite atlas of $M$. Moreover, the resulting atlas has
additional property that the intersection between $\{x_n\}_{n\in\N}$ and any chart is finite.
Therefore, we may assume that $K_\ell :=\{x_n\}_{n\in\N}\cap U_\ell$ is compact for all $\ell \in \Lambda$, 
and hence satisfies \eqref{eq:n-to-infty} for all $h \in \CU$.

Furthermore by the $C^3$-tameness, especially 
by the curvature bound and the injectivity lower bound established in Part 1,
we can choose the coordinate chart 
$$
\{(U_\ell,\varphi_\ell)\}_{\ell \in \Lambda}
$$
for some indexing countable set $I$
has a uniform size with uniform bound of the second derivatives of $\varphi_\ell$ and its inverse.

Then a strong basic neighborhood of $g$ in $\text{\rm Riem}_\omega (M)$
has the form
$$
\{h \in \text{\rm Riem}_\omega (M) \mid
\|\nabla^k h - \nabla^k g\| < \delta_k, \, k = 0, 1, \cdots \}
$$
for any sequence $\delta_k > 0$ in terms of the covariant derivatives $\nabla^k$ of 
the Levi-Civita connection of $g$, or equivalently the set with the defining inequality 
replaced by the norm
$$
 \sup_{\ell \in \Lambda} \sup_{ i \in \Z_{\geq 0}} \|D^k (h \circ \varphi_\ell^{-1})\|_{C^0(U_i)} < \delta_k
 $$
for all $k$. In particular, recalling that the definition of 
$M_{g,h}: M \to \R$ depends only on the $C^0$-values of $g, \, h$ 
on the tangent bundle $TM$, there exists $C = C(g; \{\delta_k\}_{0 \leq k\leq 2})> 0$ 
depending only on $g$ and $\{\delta_k\}_{0 \leq k\leq 2}$ such that
$$
\frac{1}{C}\leq M_{g,h}(x_n) \leq C
$$
for all $n$. This proves the pair $(g,h)$ satisfies
\eqref{eq:n-to-infty} and hence $h \in E_\alpha$. This proves that $E_\alpha$ is open.
\end{proof}

We obviously have
$$
\lim_{n\to\infty} \frac{M_{g_1,g_2}(x_n)}{M_{g_1,g_2}(x_n)}= 1 \neq 0.
$$
which implies $g_1\not \sim g_2$. Therefore there are at least two distinct equivalence
classes of $g_1$ and $g_2$. Therefore $\text{\rm Riem}_\omega (M)$ is not connected
which finishes the proof of the proposition.
\end{proof}
Note  it is well-known  that the set $\CJ_\omega$ is path connected in the weak $C^\infty$ topology.

\begin{rem}\label{rem:symplectization} There is a natural family of noncompact symplectic manifold,
the symplectization of a contact manifold, more specifically the product
$$
(Q \times \R, d(e^s \lambda)).
$$
  In this case, the way how the relevant
almost complex structures considered in the analysis of pseudo-holomorphic curves
on symplectization is to start from CR almost complex structures $J$ on
a contact manifold and consider the form of almost complex structures
$\widetilde J = J \oplus J_0$ in terms of the decomposition
$$
T(Q \times \R) = \xi \oplus \CV
$$
where $\xi$ is the contact distribution of $Q$ and
$$
\CV := \R \left\{R_\lambda, \frac{\del}{\del s}\right\}
$$
and $J_0$ is the unique almost complex structure on $\CV \cong \R^2$ satisfying
$J_0(\frac{\del}{\del s}) = R_\lambda$. Therefore in this case there is a choice of
almost complex structures that is essentially determined by the CR-almost complex structures $J$ 
on the given contact manifold $Q$.
If $Q$ is compact, the relevant topology is the $C^\infty$ topology of the set of $J$'s,
which also uniquely determines the relevant quasi-isometry class of the symplectization.

If $Q$ is noncompact as in the one-jet bundle one should tackle the similar
topological issue for the contact manifold $Q$ in the choice of
CR almost complex structures as done in \cite{oh:entanglement1} where
the notion of tame contact manifolds is introduced.
\end{rem}

\section{Quasi-isometry class $\mathfrak T$ and tame almost complex structures}
\label{sec:bilipschitz}

In this section, we introduce the set of \emph{quasi-isometrically tame} almost complex structures
denoted by $\CJ_{\omega;\mathfrak{T}}$ and study its (local) contractibility.

We start with the following definition.

\begin{defn}[$(\omega,\mathfrak{T})$-tame almost complex structures]
\label{defn:omegaT-tame}
 We call $J$ an $(\omega,\mathfrak{T})$-tame almost complex structure
 if the following hold:
 \begin{enumerate}
 \item It is $\omega$-tame.
 \item The Riemannian metric $g_J = \omega(\cdot, J \cdot)$ is in 
 the quasi-isometry class $\mathfrak{T}$.
 \end{enumerate}
 \end{defn}

A priori  the subset $\CJ_{\omega;\mathfrak{T}} \subset \CJ_{\omega}$
is not known to be connected, because it is not known whether $g_J$ is tame
or not for $J \in \CJ_{\omega}$.
However we will prove that the subset $\CJ_{\omega;\mathfrak{T}} \subset \CJ_{\omega}$
is contractible in the strong $C^r$ topology \emph{provided $3 \leq r < \infty$ 
for a given choice of quasi-isometry
class $\mathfrak{T}$ of a metric $g$ tame to $\omega$}.

\begin{theorem} \label{thm:omegaTJ-contractible}
Consider a smooth manifold $M$ and fix a quasi-isometry class $\mathfrak{T}$ 
of Riemannian metrics of $M$.
Then the set $\CJ_{\omega;\mathfrak{T}}$ of
quasi-isometrically tame almost complex structures is contractible with respect to
the $C^r$ topology with $3 \leq r < \infty$ 
of the space $\Gamma (TM)$ of sections of tangent bundles
induced from that of $(M,\mathfrak{T})$.
\end{theorem}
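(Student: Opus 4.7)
The plan is to reduce the contractibility of $\CJ_{\omega;\mathfrak{T}}$ to the contractibility of the space of tame Riemannian metrics $\Riem_{\mathfrak{T}}(M)$ already established in Theorem \ref{thm:Cr-contractibility}, by exploiting the canonical (pointwise) polar-decomposition correspondence between a Riemannian metric $g$ and an $\omega$-compatible almost complex structure.

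First I fix a reference $J_{\text{\rm ref}} \in \CJ_{\omega;\mathfrak{T}}$ and, for any metric $g$ on $M$, I define the endomorphism $A_g \in \Gamma(\End(TM))$ by $\omega(X,Y) = g(A_g X, Y)$. Since $\omega$ is nondegenerate, $A_g$ is invertible and $g$-skew-adjoint; in particular $-A_g^2$ is $g$-self-adjoint and positive definite, so one can form $P_g := \sqrt{-A_g^2}$ and set $\Psi(g) := A_g P_g^{-1}$. A direct check shows $\Psi(g)^2 = -\mathrm{Id}$, that $\Psi(g)$ is $\omega$-compatible, and that the associated metric satisfies $g_{\Psi(g)}(X,Y) = g(P_g X, Y)$. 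In particular $\Psi$ is a retraction: $\Psi(g_J) = J$ whenever $J$ is already $\omega$-compatible.

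Next I verify the two properties needed to turn this into a contraction of $\CJ_{\omega;\mathfrak{T}}$. For preservation of quasi-isometry class, write $A_g = B J_{\text{\rm ref}}$ where $B$ is the $g$-self-adjoint positive operator with $g_{J_{\text{\rm ref}}}(X,Y) = g(BX,Y)$; by hypothesis $g, g_{J_{\text{\rm ref}}} \in \mathfrak{T}$, hence $B$ and $B^{-1}$ are uniformly bounded, so the eigenvalues of $A_g$ and hence of $P_g$ are uniformly bounded away from $0$ and $\infty$. This forces $g_{\Psi(g)} = g(P_g\cdot,\cdot)$ to lie in $\mathfrak{T}$ whenever $g$ does, so $\Psi$ maps $\Riem_{\mathfrak{T}}(M)$ into $\CJ_{\omega;\mathfrak{T}}$. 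For continuity in the strong $C^r$ topology, observe that $\Psi$ is constructed by fibrewise smooth algebraic operations (inversion, square root on the open cone of positive-definite operators) from the jet of $g$; coupled with the just-established uniform lower bound on the spectrum of $P_g$, these operations admit uniform $C^r$ estimates (for any $r \geq 0$) on their derivatives in terms of uniform bounds on $g$, $g^{-1}$ and their derivatives. Hence $\Psi$ is continuous with respect to the strong $C^r$ topology.

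Finally I define the contraction
\begin{equation}
H \colon [0,1] \times \CJ_{\omega;\mathfrak{T}} \longrightarrow \CJ_{\omega;\mathfrak{T}}, \qquad H(s,J) := \Psi\bigl((1-s)\,g_J + s\,g_{J_{\text{\rm ref}}}\bigr).
\end{equation}
The convex-sum path lies in $\Riem_{\mathfrak{T}}(M)$ by Theorem \ref{thm:convexity} and is strong-$C^r$ continuous in $(s,g_J)$ by Theorem \ref{thm:Cr-contractibility}; composing with the strong-$C^r$ continuous retraction $\Psi$ yields a strong-$C^r$ continuous homotopy with $H(0,J) = \Psi(g_J) = J$ and $H(1,J) = \Psi(g_{J_{\text{\rm ref}}}) = J_{\text{\rm ref}}$, proving contractibility.

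The one genuinely non-formal ingredient is ensuring that both $\Psi$ and the convex-sum path interact well with the strong $C^r$ topology on a noncompact $M$. The continuity of the straight-line homotopy $s \mapsto (1-s)g_J + s\,g_{J_{\text{\rm ref}}}$ in strong $C^r$ topology is exactly the content of Theorem \ref{thm:Cr-contractibility}, which in turn rests on the uniform injectivity-radius bound from Theorem \ref{thm:iota-bounds} and hence on the $C^3$-tameness hypothesis; this is precisely why the statement is restricted to $3 \leq r < \infty$ and cannot be extended to the usual strong $C^\infty$ topology, in line with Proposition \ref{prop:not-connected}.
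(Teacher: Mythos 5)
Your proof follows the same overall strategy as the paper: pointwise polar decomposition yields a retraction $\Psi \colon \Riem_{\mathfrak T}(M) \to \CJ_{\omega;\mathfrak T}$, and composing it with the straight-line contraction of $\Riem_{\mathfrak T}(M)$ from Theorems~\ref{thm:convexity} and \ref{thm:Cr-contractibility} produces the desired contraction of $\CJ_{\omega;\mathfrak T}$. Where you genuinely diverge is in verifying that $\Psi$ preserves the quasi-isometry class: you write $A_g = B\,J_{\mathrm{ref}}$ with $B$ the $g$-self-adjoint comparison operator and read off a uniform two-sided operator-norm bound on $P_g=\sqrt{-A_g^2}$ directly from the quasi-isometry constant. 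This is cleaner and more general than the paper's route, which works only along the convex path $g_t=(1-t)g_{J_0}+tg_{J_1}$ and deduces the pinching $\frac12 \le 1/\lambda_{i,t} \le \frac12+\frac14(C+1/C)$ via the identity for $(A_t^*)^{-1}A_t^{-1}$ in \eqref{eq:A*t-1At-1}, the relation $(-J_1J_0)^{-1}=-J_0J_1$, the Jordan-form Lemma~\ref{lem:curious}, and the two-case eigenvalue analysis of $-J_1J_0-J_0J_1$ in Subsection~\ref{subsec:pinching}. Your argument cuts through all of that. (Note one small imprecision: $A_g$ is $g$-skew-adjoint, so its eigenvalues are purely imaginary; what you actually bound, and what you need, are the eigenvalues of $-A_g^2=A_gA_g^*$, i.e.\ the singular values of $A_g$.)

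There is one step you leave implicit that the paper treats separately. Landing in $\CJ_{\omega;\mathfrak T}$ requires not only that $g_{\Psi(g)}$ lie in the quasi-isometry class $\mathfrak T$, but also that it be a \emph{tame} metric in the sense of Definition~\ref{defn:tame-J}: bounded curvature (indeed $C^1$-bounded) and injectivity radius bounded away from zero. Quasi-isometry alone does not give the injectivity-radius lower bound. The paper closes this in Subsection~\ref{subsec:gJ-tame} by observing that $t \mapsto g_{J_t}$ is strong-$C^3$-continuous and starts from a tame metric, so Theorem~\ref{thm:no-cusp} applies. Your ``uniform $C^r$ estimates on $\Psi$'' claim, combined with the tameness of the endpoint $g_{J_{\mathrm{ref}}}$, gives exactly the hypotheses of Theorem~\ref{thm:no-cusp}; you should invoke it explicitly to conclude that $g_{\Psi((1-s)g_J+sg_{J_{\mathrm{ref}}})}$ stays tame, rather than treating tameness as an automatic consequence of continuity.
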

The rest of the section will be occupied by the proof of this theorem.

In the pointwise point of view at each $x\in M$ and $t\in [0,1]$,  
we want to find an endomorphism   $J_{x;t}: T_xM \to T_xM$ satisfying $J_{x;t}^2 = -\id$
that is compatible with $\omega_x$ over the family of the inner products $g_{x;t}$.
Solving this linear algebra problem relies on the well-known polar decompostion
in the linear algebra.

We start with recalling the polar decomposition, following the exposition of
\cite{daSilva}.
\subsection{Review of polar decomposition}

Let $(V,\Omega)$ be a symplectic vector space.
Let $G$ be any Euclidean inner product.
Nondegeneracy of $\Omega$ and $G$ determine two isomorphisms
$$
\alpha: v\mapsto \Omega(v,\cdot)
$$
$$
\beta:v \mapsto G(v,\cdot)
$$
from $V$ to its dual space $V^*$. Then $\alpha$ is a skew-symmetric and
$\beta$ is a symmetric linear map.
Then we obtain a unique endomorphism $A:=\beta^{-1}\circ\alpha$ on $V$
defined by the relation
\be\label{eq:Omega=GA}
\Omega(u,v)= G(A u,v).
\ee
We denote  by $A^*$ the adjoint linear map of $A \in \End(V)$ \emph{with respect to the inner product $G$}. Then
$$
G(A^*u,v)= G(u,Av)=G(Av,u)=\Omega(v,u)=-\Omega(u,v)=-G(Au,v)
$$
and $A^*=-A$, so $A$ is skew-symmetric with respect to $G$.
Then
\begin{itemize}
\item $AA^*$ is symmetric (with respect to $G$): $(AA^*)^*=AA^*$.
\item $AA^*$ is positive definite: $G(AA^*v,v)=G(A^*v,A^*v)>0$ for all $v\neq 0$.
\end{itemize}
Therefore, $AA^*$ is diagonalizable with positive eigenvalues $\lambda_i$, so that
$$
AA^*=BD B^{-1}
$$
for a diagonal matrix
\be\label{eq:Dt}
D = \text{\rm diag}\{\lambda_1,\dots,\lambda_{2n}\}
\ee
 with $\lambda_i > 0$.
Therefore we can define the square root of $AA^*$ by rescaling the eigenspaces and get
$$
\sqrt{AA^*}=B\sqrt{D} B^{-1}, \quad
\sqrt{D} = \text{\rm diag}\{\sqrt{\lambda_1},\dots,\sqrt{\lambda_{2n}}\}.
$$
Then $\sqrt{AA^*}$ is again symmetric and positive definite. If we put
\be\label{eq:J}
J:=(\sqrt{AA^*})^{-1}A
\ee
then $J^2 = -\text{\rm Id}$.
(The factorization $A=(\sqrt{AA^*})J$ is called the \emph{polar decomposition} of $A$.)

The following continuity is an immediate consequence of the uniqueness of the construction.
\begin{lem}\label{lem:gtoJ} Denote by $J_G$ the above $J$ associated to $G$ (and $\omega$). Then the map
$$
\text{\rm Sym}_+^2(V) \to \End(V); \quad G \mapsto J_G
$$
is continuous
where $\text{\rm Sym}_+^2(V)$ is the set of positive definite symmetric quadratic forms on $V$.
\end{lem}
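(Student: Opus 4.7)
The plan is to trace continuity through each stage of the polar decomposition construction. First, I would observe that $G \mapsto A_G$ is continuous (indeed smooth): after choosing any auxiliary basis, the maps $\alpha$ and $\beta$ of the excerpt are represented by the fixed matrix of $\Omega$ and by the Gram matrix of $G$, so that $A_G$ is given by $G^{-1}\Omega$ in matrix form, and matrix inversion is smooth on the open set of positive-definite symmetric Gram matrices. This already handles the first factor in $J_G = (\sqrt{A_G A_G^*})^{-1} A_G$.

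Next, I would exploit the identity $A_G^* = -A_G$ established in the excerpt to rewrite
\begin{equation*}
A_G A_G^* = -A_G^{2},
\end{equation*}
an expression that depends on $G$ only through $A_G$ itself, with no further reference to the inner product. By the excerpt, $-A_G^2$ is $G$-symmetric and $G$-positive definite, so its spectrum is contained in $(0,\infty)$; this is a purely spectral condition, hence independent of $G$ once $A_G$ is fixed.

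The key step is then the continuity of the positive square root at the operator level. The operator $S_G := \sqrt{-A_G^2}$ is uniquely characterized as the operator on $V$ whose spectrum lies in $(0,\infty)$ and whose square equals $-A_G^2$; this characterization forces $S_G$ to be $G$-symmetric and positive by the uniqueness of the polar decomposition. To see continuity at a fixed $G_0$, I would invoke holomorphic functional calculus: pick a positively oriented loop $\gamma$ in the open right half-plane enclosing the spectrum of $-A_{G_0}^2$, and for $G$ near $G_0$ (so that the spectrum of $-A_G^2$ remains inside $\gamma$) write
\begin{equation*}
S_G \;=\; \frac{1}{2\pi i}\oint_\gamma \sqrt{z}\,(zI + A_G^{2})^{-1}\,dz,
\end{equation*}
where $\sqrt{\cdot}$ is the principal branch. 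The contour is fixed and the integrand depends continuously (in fact smoothly) on $A_G$, so $G \mapsto S_G$ is continuous at $G_0$. An equivalent route, which I would use as a fallback, is to note that $X \mapsto X^2$ is a diffeomorphism of the cone of symmetric positive-definite operators (its derivative $Y \mapsto XY+YX$ at positive-definite $X$ is invertible on symmetric operators by the Lyapunov/Sylvester equation), and then apply the inverse function theorem.

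Combining these, $J_G = S_G^{-1} A_G$ is continuous in $G$ as a composition of continuous maps, since inversion is continuous on invertible operators and $S_G$ is positive-definite, hence invertible. The main obstacle is the square-root step, which is the one place where a nontrivial analytic fact (continuity of functional calculus, or equivalently the inverse function theorem for $X\mapsto X^2$) has to be invoked; all other steps are routine linear algebra. I would also remark that the argument gives in fact smoothness of $G \mapsto J_G$, which will be useful when upgrading pointwise statements to statements in $C^r$-topology on sections in the application to $\mathcal{J}_{\omega;\mathfrak T}$.
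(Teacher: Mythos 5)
Your proof is correct and takes essentially the same route as the paper: write $J_G = (\sqrt{A_G A_G^*})^{-1} A_G$ explicitly and trace continuity through each factor of the polar decomposition. The paper's proof is considerably terser, merely asserting that continuity follows from the explicit formula; your treatment of the square-root step (via holomorphic functional calculus, or the inverse function theorem for $X \mapsto X^2$) supplies the one genuinely analytic ingredient that the paper leaves implicit.
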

\begin{proof} By definition of the matrix $A$ \eqref{eq:Omega=GA}, we may identify
$\text{\rm Sym}_+^2(V)$ with an open subset of symmetric matrices equipped with
the subspace topology of the latter which is homeomorphic to $\R^{n(n+1)/2}$.
Then we have the explicit expression of the map given by \eqref{eq:J}
from which continuity follows.
\end{proof}

\subsection{Uniform pinching estimates of eigenvalues of $A_tA_t^*$}
\label{subsec:pinching}

We fix a reference $(\omega,\mathfrak{T})$-tame almost complex structure
$J_0\in \CJ_{\omega;\mathfrak{T}}$.
Define $g_0:= \omega(\cdot, J_0 \cdot)\in \mathfrak{T}$.
By Theorem  \ref{thm:convexity}, the map
$$
L :[0,1] \times \text{\rm Riem}_\mathfrak{T}(M) \to \text{\rm Riem}_\mathfrak{T}(M)
$$
given by  the linear interpolation $\gamma(t,g)=(1-t)g_0 + t g$
is well-defined for $g\in \text{\rm Riem}_\mathfrak{T}(M)$ in that $g_t$
is contained in the same quasi-isometry class $\mathfrak T$.

Let $J_1$ be any element in $\CJ_{\omega;\mathfrak{T}}$ and
write $g_1:= \omega(\cdot, J_1 \cdot)\in \mathfrak{T}$. By definition,
$g_1$ is $C$-quasi-isometric to $g_0$ for some $C \geq 1$.
(Here $C = A$ in Definition \ref{defn:A-bilipschitz}.)
Then $t \mapsto L(t, g_0)$ gives us a path from $g_0$ to $g_1$. We denote
$$
g_t: = L(t,g_0)
$$
which is consistent with the notation for the initial condition $L(0,g_0) = g_0$.
We apply polar decomposition to each $g_{t,x}$ for $(t,x) \in [0,1] \times M$ and
obtain $A_{t,x} \in \End(T_xM)$. We write the associated bundle map by $A_t \in \End(TM)$.

By the unique algebraic process performed in the above
polar decomposition, the assignment $t \mapsto A_t$ defines a continuous path
in $C^\infty$ topology of $\End(TM)$. We then define
\be\label{eq:Jt}
J_{t,x}:=(\sqrt{A_{t,x}A_{t,x}^*})^{-1}A_{t,x}
\ee
by applying \eqref{eq:J} to each $t$ and $x$. We write by $J_t$ the associated
almost complex structure of $M$.

Since $A_t$ commutes with $\sqrt{A_tA_t^*}$, $J_t$ commutes with $\sqrt{A_tA_t^*}$.
Furthermore
\begin{itemize}
\item $J_t$ is skew-symmetric with respect to $g_t$;
$$
J_t^*=A_t^*(\sqrt{A_tA_t^*})^{-1}=-A_t(\sqrt{A_tA_t^*})^{-1}=-J_t.
$$
\item $J_t$ is orthogonal:
$J_t^*J_t=A_t^*(\sqrt{A_tA_t^*})^{-1}(\sqrt{A_tA_t^*})^{-1}A_t=\text{\rm Id}$.
\end{itemize}
We check that $J_t$ is compatible with $\omega$: we compute
\begin{itemize}
\item $\omega(J_tu,J_tv)=g_t(A_tJ_tu,J_tv)=g_t(J_tA_tu,J_tv)=g_t(A_tu,v)=\omega(u,v)$
\item $\omega(v,J_tv)=g_t(A_tv,J_tv)=g_t(-J_tA_tv,v)=g_t(\sqrt{A_tA_t^*}v,v)>0$ for all $v\neq 0$.
\end{itemize}
Hence we obtain a $J_t$-tame metric $g_{J_t}: = \omega(\cdot, J_t \cdot)$ for each $t \in [0,1]$.

Now we compare the $J_t$-tame metric
$g_{J_t}=\omega(\cdot,J_t\cdot)$ and $g_t$. \emph{Note that $g_{J_t}$ is constructed
via the two-step process
\be\label{eq:two-step}
g_t \mapsto J_t \mapsto g_{J_t}
\ee
where $g_{J_t}$  is not necessarily the same as the starting metric $g_t$.}
Using the definition, we compute
$$
g_{J_t}(u,v)=\omega(u,J_tv)=g_t(A_tu,J_tv)=g_t(J_t^*A_tu,v)=g_t(\sqrt{A_tA_t^*}u,v).
$$
Therefore we obtain the inequality
\be\label{eq:gJt-gt}
\left(\min_{1\leq i\leq 2n}{\sqrt{\lambda_{i,t}}}\right)g_t(u,u)\leq g_{J_t}(u,u)\leq
\left(\max_{1\leq i\leq 2n}{\sqrt{\lambda_{i,t}}}\right)g_t(u,u).
\ee

This shows that \emph{once we find a lower bound (away from zero) and an upper bound of $\sqrt{\lambda_{i,t}}$}, we can conclude that $g_{J_t}$ and $g_t$ are \emph{quasi-isometric}.

By now, we have reduced the contractibility proof to the study of
the uniform bounds for the eigenvalues $\lambda_{i,t}$
of the symmetric linear map $A_tA_t^*$ with respect to the metric $g_t$.
We then examine the bounds of the eigenvalues of $\lambda_{i,t}$ henceforth.

Recall that $\omega$ is independent of $t$ and $\beta_t=(1-t)\beta_0+t\beta_1$ by the definition of $\beta$
since $g_t=(1-t)g_0+tg_1$. Now we have the linearity of the map $t \mapsto A_t^{-1}$:
$$
  \begin{aligned}
A_t^{-1}&=\alpha^{-1}\circ \beta_t=\alpha^{-1}\circ((1-t)\beta_0+t\beta_1)\\
&=(1-t)\alpha^{-1}\circ\beta_0+t\alpha^{-1}\circ\beta_1=(1-t)A_0^{-1}+tA_1^{-1}.
  \end{aligned}
$$
Also note that $g_{J_0}=g_0,\, g_{J_1}=g_1$ which implies $A_0=J_0$ and $A_1=J_1$.
Therefore,
$$
A_t^{-1}=-(1-t)J_0-tJ_1.
$$
Since $A_tA_t^*$ is diagonalizable with positive eigenvalues,
$(A_tA_t^*)^{-1}=(A_t^*)^{-1}A_t^{-1}$ is also diagonalizable with positive eigenvalues and

\begin{eqnarray}\label{eq:A*t-1At-1}
 (A_t^*)^{-1}A_t^{-1}& =& - A_t^{-1}A_t^{-1}=-(-(1-t)J_0-tJ_1)^2 \nonumber\\
&= &-t^2J_1^2-t(1-t)J_0J_1-t(1-t)J_1J_0-(1-t)^2J_0^2 \nonumber\\
&=& (2t^2-2t+1)\text{\rm {Id}}-t(1-t)J_0J_1-t(1-t)J_1J_0 \nonumber\\
 &=& \text{\rm {Id}}+t(1-t)(-J_0J_1-J_1J_0-2\text{\rm {Id}}).
\end{eqnarray}

\begin{lem}\label{lem:inverse}
We have $(-J_1 J_0)^{-1}=- J_0 J_1$.
\end{lem}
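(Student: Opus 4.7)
The claim is an elementary algebraic identity that follows purely from the fact that $J_0$ and $J_1$ are almost complex structures, so $J_0^2 = J_1^2 = -\mathrm{Id}$. No analytic input is needed, which is consistent with the role this lemma plays as a small bookkeeping step in the chain of identities leading from \eqref{eq:A*t-1At-1} toward an expression for $(A_t^*)^{-1} A_t^{-1}$ in terms of a single endomorphism.

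The plan is to verify the identity by a direct computation, multiplying the two proposed inverses and using $J_0^2 = -\mathrm{Id}$. Concretely, I would compute
\begin{equation*}
(-J_1 J_0)\,(-J_0 J_1) \;=\; J_1 J_0 J_0 J_1 \;=\; J_1\bigl(J_0^2\bigr) J_1 \;=\; J_1(-\mathrm{Id}) J_1 \;=\; -J_1^2 \;=\; \mathrm{Id},
\end{equation*}
and symmetrically
\begin{equation*}
(-J_0 J_1)\,(-J_1 J_0) \;=\; J_0\bigl(J_1^2\bigr) J_0 \;=\; -J_0^2 \;=\; \mathrm{Id}.
\end{equation*}
Either of these already implies the other since $TM$ is finite dimensional (and the two maps are endomorphisms of the same fiber), so in fact only one of the computations is needed.

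There is no real obstacle here; the only thing to be careful about is the sign conventions, namely that for an almost complex structure $J$ one has $J^{-1} = -J$, so that $(-J_1)^{-1} = J_1$ and $J_0^{-1} = -J_0$. An equivalent, slightly more conceptual way to organize the proof is to write $(-J_1 J_0)^{-1} = J_0^{-1}(-J_1)^{-1} = (-J_0)(J_1) = -J_0 J_1$, which records the identity in the same form in which it will be applied subsequently in the analysis of the operator $-J_0 J_1 - J_1 J_0 - 2\,\mathrm{Id}$ appearing in \eqref{eq:A*t-1At-1}.
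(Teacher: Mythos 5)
Your proof is correct, but it takes a genuinely different (and in fact more elementary) route than the paper's. You verify the identity by the direct algebraic computation
\[
(-J_1 J_0)(-J_0 J_1) = J_1 J_0^2 J_1 = -J_1^2 = \mathrm{Id},
\]
which uses nothing beyond $J_0^2 = J_1^2 = -\mathrm{Id}$. The paper instead derives it from the metric-compatibility identities
\[
g_0(u,v)=\omega(u,J_0v)=\omega(u,-J_1J_1J_0v)=g_1(u,(-J_1J_0)v), \qquad
g_1(u,v)=g_0(u,(-J_0J_1)v),
\]
from which the inverse relation follows by substitution and nondegeneracy of $g_0$. Your computation is shorter and exposes the fact that the lemma is a pure consequence of the almost complex structure axiom; the paper's route has the advantage that it simultaneously records the identity $g_0(u,v)=g_1(u,-J_1J_0v)$, i.e.\ that $-J_1J_0$ is the transfer endomorphism from $g_0$ to $g_1$, and this identity is precisely what is reused a few lines later in the eigenvalue analysis of Case~1 (where $g_0(u_i,u_i)=\lambda_i g_1(u_i,u_i)$ is extracted). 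So both proofs are valid; yours is leaner for the statement in isolation, while the paper's does useful bookkeeping for the argument that follows.
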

\begin{proof} We compute
\begin{equation}\label{eqn:riem_met_comp}
  \begin{aligned}
g_0(u,v)=\omega(u,J_0v)=\omega(u,-J_1J_1J_0v)=g_1(u,(-J_1J_0)v),\\
g_1(u,v)=\omega(u,J_1v)=\omega(u,-J_0J_0J_1v)=g_0(u,(-J_0J_1)v).
  \end{aligned}
\end{equation}
The lemma immediately follows from this.
\end{proof}

Postponing the discussion of the boundary case $t = 0, \, 1$ till the end of the proof,
we first consider the case $t\in (0,1)$.
For this purpose, the following simple
result in linear algebra plays an important role.

\begin{lem}\label{lem:curious}
Let $M$ be an invertible real matrix.
Then every eigenvalue of $M+M^{-1}$ is of the form $\lambda_i+\frac{1}{\lambda_i}$,
where $\lambda_i$ is an eigenvalue of $M$.
\end{lem}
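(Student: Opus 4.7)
The plan is to reduce to a simultaneous triangularization of $M$ and $M^{-1}$, where the claim becomes evident from reading off diagonal entries. Since the statement is about eigenvalues in the algebraic sense, I would pass to the complex field $\CC$ from the outset; the real matrix $M$ is viewed as an endomorphism of $\CC^n$ and all spectra are considered over $\CC$.

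First I would invoke the Schur decomposition (or, equivalently, Jordan form): there exist an invertible $P$ and an upper-triangular $T$ over $\CC$ with $M = P T P^{-1}$, and the diagonal entries of $T$ are exactly the eigenvalues $\lambda_1,\dots,\lambda_n$ of $M$ (with algebraic multiplicity). Since $M$ is invertible, each $\lambda_i \neq 0$, so $T$ is an invertible upper-triangular matrix and $T^{-1}$ is again upper-triangular with diagonal entries $1/\lambda_1,\dots,1/\lambda_n$ (the inverse of a triangular matrix is triangular, and its diagonal entries are the reciprocals of those of the original).

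Next, I would simply compute
$$
M + M^{-1} = P T P^{-1} + P T^{-1} P^{-1} = P\,(T + T^{-1})\,P^{-1},
$$
so $M + M^{-1}$ is similar to the upper-triangular matrix $T + T^{-1}$, whose diagonal entries are $\lambda_i + 1/\lambda_i$. Since similar matrices have the same spectrum, and the spectrum of an upper-triangular matrix is exactly its diagonal, every eigenvalue of $M + M^{-1}$ is of the asserted form $\lambda_i + 1/\lambda_i$ for some eigenvalue $\lambda_i$ of $M$, with multiplicities matching.

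There is essentially no obstacle; the only subtlety worth flagging is that one must work over $\CC$ in order for the triangularization step to be available (a real $M$ need not admit a real triangularization), but this is harmless since the lemma is about eigenvalues, which naturally live in $\CC$. In the application that follows in the paper, $M$ will be of the form $-J_0 J_1$ with $J_0,J_1$ almost complex structures, and the bijection between eigenvalues of $M$ and eigenvalues of $M+M^{-1}$ via $\lambda \mapsto \lambda + 1/\lambda$ is exactly what will be needed to control the spectrum of $(A_t^*)^{-1} A_t^{-1}$ from \eqref{eq:A*t-1At-1}.
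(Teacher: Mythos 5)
Your proof is correct and takes essentially the same approach as the paper: the paper uses the Jordan normal form to obtain an upper-triangular $U$ conjugate to $M$, observes $U^{-1}$ is upper-triangular with reciprocal diagonal entries, and reads off the eigenvalues of $M+M^{-1}$ from the diagonal of $U+U^{-1}$, exactly as you do with $T$. The only (minor) improvement you add is explicitly flagging the base change to $\CC$, which the paper leaves implicit.
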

\begin{proof} This follows from the fact that $M$ and $M^{-1}$ commute each other and that
for such a pair one can obtain \emph{simultaneous} Jordan canonical forms. For completeness'
sake, we provide the details in an Appendix \ref{sec:Jordan}.
\end{proof}

We derive
\be\label{eq:-J1J0-J0J1}
-J_1 J_0-J_0 J_1=2\text{\rm {Id}}+\frac{1}{t(1-t)}((A_t^*)^{-1}A_t^{-1}-\text{\rm {Id}})
\ee
from \eqref{eq:A*t-1At-1}
which shows that $-J_1 J_0-J_0 J_1$ is symmetric with respect to the metric $g_t$
for all $0 < t < 1$. Therefore all eigenvalues are real.
We also obtain
$$
-J_1 J_0 -J_0 J_1 = -J_1 J_0+(-J_1 J_0)^{-1}
$$
from Lemma \ref{lem:inverse}.

Now let $u_i$ be an eigenvector of $-J_1J_0$ with eigenvalue $\lambda_i$.
Then $u_i$ is also an eigenvector of $(A_t^*)^{-1}A_t^{-1}$ with eigenvalue
\be\label{eq:1/lambdait}
\frac{1}{\lambda_{i,t}} : =1+t(1-t)\left(\lambda_i+\frac{1}{\lambda_i}-2\right)
\ee
where $\lambda_{i,t}$ is the associated eigenvalue of $A_tA_t^*$.

We note that $\lambda_i$ is not zero and may not necessarily be real since $-J_1J_0$ is
not necessarily symmetric. We also know that
\begin{itemize}
\item the eigenvalue $\lambda_i$ gives rise to an eigenvalue of $-J_1 J_0-J_0 J_1$
given by $\lambda_i+\frac{1}{\lambda_i}$, and  they are real by the symmetry thereof.
\item every eigenvalue
of $-J_1 J_0-J_0 J_1$ is of this form by Lemma \ref{lem:curious}.
\end{itemize}
We note that the reality in the former statement is possible \emph{only when either
$\lambda_i$ is real or it satisfies $|\lambda_i|=1$.}

We consider the two cases separately.

\medskip

\noindent{\bf Case 1: $\lambda_i$ is real.}
\smallskip

If $u_i$ is an eigenvector of $-J_1J_0$ with real eigenvalue $\lambda_i$,
$$
g_0(u_i,u_i)=g_1(u_i,-J_1J_0u_i)=g_1(u_i,\lambda_i u_i)=\lambda_ig_1(u_i,u_i).
$$
Since $g_0$ and $g_1$ are assumed to be $C$-quasi-isometric in the very beginning of the current
subsection, it follows 
that there exists a constant $ C \geq 1$ ($C = A$ in Definition \ref{defn:A-bilipschitz})
such that
$$
\frac{1}{C}g_1(u_i,u_i)\leq g_0(u_i,u_i)\leq C g_1(u_i,u_i).
$$
This implies $\frac{1}{C}\leq \lambda_i \leq C$ and we obtain
$$
2\leq \lambda_i+\frac{1}{\lambda_i}\leq C+\frac{1}{C}.
$$
We dereve the uniform bound
\be\label{eq:lambdabound1}
1\leq \frac{1}{\lambda_{i,t}}\leq \frac{1}{2}+\frac{1}{4}\left(C+\frac{1}{C}\right)
\ee
from \eqref{eq:1/lambdait}
for all $t \in (0,1)$ holds in this case.

\medskip

\noindent{\bf Case 2:
$\lambda_i$ is not real and $|\lambda_i|=1$.}
\smallskip

In this case, we have $\frac{1}{\lambda_i}=\overline{\lambda_i}$ and $\lambda_i+\frac{1}{\lambda_i}$
is a real number since the latter is an eigenvalue of the symmetric endomorphism
$-J_1J_0 - J_0J_1$.  This in particular implies
$-2< \lambda_i+\frac{1}{\lambda_i}< 2$.
We will now prove the following inequality.
\begin{lem}\label{lem:positivity} Assume Case 2. Then for all $i$, we have
\be\label{eq:>0}
\lambda_i+\frac{1}{\lambda_i} \geq  0
\ee
\end{lem}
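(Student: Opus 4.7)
The plan is to show that Case 2 is in fact vacuous, so the inequality $\lambda_i + 1/\lambda_i \geq 0$ holds trivially. The key observation is that the real linear operator $B := -J_1 J_0$ on $V$ is self-adjoint and positive definite with respect to $g_1$, and hence has only positive real eigenvalues by the spectral theorem. This rules out the non-real $\lambda_i$ considered in Case 2.

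To carry this out, I will use equation \eqref{eqn:riem_met_comp}, which reads $g_1(u, Bv) = g_0(u,v)$. Symmetry of the real bilinear forms $g_0$ and $g_1$ then gives
$$g_1(Bu, v) = g_1(v, Bu) = g_0(v,u) = g_0(u,v) = g_1(u, Bv),$$
establishing $g_1$-self-adjointness; and positive definiteness is immediate from $g_1(u, Bu) = g_0(u,u) > 0$ for $u \neq 0$. The finite-dimensional real spectral theorem then forces the eigenvalues of $B$, even when viewed in the complexification $V \otimes_{\R} \C$, to be positive real numbers, so no $\lambda_i$ with $|\lambda_i|=1$ and $\lambda_i \notin \R$ can arise.

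Should one wish to avoid invoking vacuity, a direct argument also works: write a complex eigenvector as $u_i = v + iw$ with $v, w \in V$ and $\lambda_i = e^{i\theta}$, so that $B$ acts as a rotation by $\theta$ on the real plane $\operatorname{span}_{\R}\{v,w\}$. Pairing $Bu_i = \lambda_i u_i$ against $v$ and $w$ under $g_1$, using the identity $g_1(u, Bv) = g_0(u,v)$, and exploiting the symmetry $g_1(v,w) = g_1(w,v)$ yield four scalar equalities; subtracting an appropriate pair produces
$$\sin\theta \, (g_1(v,v) + g_1(w,w)) = 0.$$
Positive definiteness of $g_1$ then forces $\sin\theta = 0$, hence $\lambda_i \in \R$.

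I do not foresee a serious obstacle: the entire argument is pointwise and algebraic on a finite-dimensional real inner product space, and does not use any of the heavy large-scale Riemannian machinery of earlier sections. The only mildly delicate point is the passage from the real operator $B$ to its complexification, which the real spectral theorem handles cleanly.
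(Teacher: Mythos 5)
Your proof is correct and takes a genuinely different, and arguably cleaner, route than the paper.

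The paper handles Case~2 by contradiction: assuming $\lambda_i + 1/\lambda_i < 0$ for a complex eigenvector $u_i$, it evaluates $g_0(u_i, (-J_1J_0 - J_0J_1)u_i)$ in two ways and derives $\lambda_i^2 + 1 < 0$, which together with $|\lambda_i|=1$ forces $\lambda_i = \pm\sqrt{-1}$ and hence $\lambda_i^2 + 1 = 0$, a contradiction. You instead observe that $B := -J_1J_0$ is $g_1$-self-adjoint and positive definite, which follows in two lines from \eqref{eqn:riem_met_comp} and the symmetry of $g_0,\, g_1$; the real spectral theorem then forces all eigenvalues of $B$ to be positive real, so Case~2 is vacuous. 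Your argument is more structural and has two advantages: first, it sidesteps a genuinely delicate point in the paper's computation, namely that $g_0(u_i,u_i)$ for a \emph{complex} eigenvector $u_i$ is not automatically positive (or even real) under the bilinear extension of $g_0$, while the formula $g_0(u_i,\lambda u_i) = \lambda\, g_0(u_i,u_i)$ requires the bilinear, not Hermitian, extension; second, it simplifies the case analysis of Subsection~\ref{subsec:pinching} by showing outright that every $\lambda_i$ is a positive real number, so one is always in Case~1. (This self-adjointness of $-J_1J_0$ is in fact a standard piece of symplectic linear algebra, and recognizing it makes the entire eigenvalue-pinching argument more transparent.) Your alternative direct computation with $u_i = v+iw$ and the four pairings is also correct and self-contained, and gives a useful sanity check without invoking the spectral theorem.

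One small remark for clarity: when you invoke ``vacuity,'' you should still note, as you do, that the lemma's conclusion $\lambda_i + 1/\lambda_i \geq 0$ then holds for all $i$ simply because there are no $i$ falling under the hypotheses of Case~2; and in fact the stronger statement $\lambda_i + 1/\lambda_i \geq 2$ (Case~1's bound) holds for all $i$ once you know every $\lambda_i > 0$.
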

\begin{proof} We prove this  by contradiction.

Suppose to the contrary that there is
 an eigenvector $u_i$ of $-J_1J_0$ with eigenvalue $\lambda_i$ such that \eqref{eq:>0}
 fails to hold so that $\lambda_i+\frac{1}{\lambda_i} < 0$. Then
$$
g_0(u_i,(-J_1J_0-J_0J_1)u_i)=g_0\left(u_i,\left(\lambda_i+\frac{1}{\lambda_i}\right)u_i\right)
= \left(\lambda_i+\frac{1}{\lambda_i}\right)g_0(u_i,u_i)<0
$$
by the standing hypothesis and the positivity $g_0(u_i,u_i) > 0$.
On the other hand, we also have
\beastar
g_0(u_i,(-J_1J_0-J_0J_1)u_i)& = &g_0(u_i,-J_1J_0u_i)+g_0(u_i,-J_0J_1u_i)\\
&= &g_1(u_i,(-J_1J_0)^2 u_i)+g_1(u_i,u_i)\\
&= &(\lambda_i)^2g_1(u_i,u_i)+g_1(u_i,u_i)\\
&= &((\lambda_i)^2+1)g_1(u_i,u_i).
\eeastar
Therefore we obtain
$$
0 > \left(\lambda_i+\frac{1}{\lambda_i}\right)g_0(u_i,u_i) = (\lambda_i^2+1)g_1(u_i,u_i).
$$
We derive that $\lambda_i^2+1$ is also real and satisfies
\be\label{eq:lambdai2+1}
\lambda_i^2+1 < 0
\ee
since $g_1(u_i, u_i) > 0$.
In particular $\lambda_i^2$ is a real number with $|\lambda_i| = 1$, but 
$\lambda_i$ itself is not real by the standing assumption of the current case. 
This implies $\lambda_i = \pm \sqrt{-1}$, and hence $\lambda_i^2+1 = 0$, 
which contradicts to \eqref{eq:lambdai2+1}. 
This finishes the proof of \eqref{eq:>0}.
\end{proof}

\begin{cor}
 \be\label{eq:lambdabound2}
 \frac{1}{2}\leq \frac{1}{\lambda_{i,t}} <  1.
 \ee
 for all $ t \in (0,1)$ in this case.
 \end{cor}
 \begin{proof} Recall the formula \eqref{eq:1/lambdait}
 $$
 \frac{1}{\lambda_{i,t}} : =1+t(1-t)\left(\lambda_i+\frac{1}{\lambda_i}-2\right).
 $$
 The upper bound immediately follows which corresponds to the case 
 $\lambda_i+\frac{1}{\lambda_i} \to 2$.
 On the other hand, when $\lambda_i+\frac{1}{\lambda_i} = 0$,
 we have
 $$
 \frac{1}{\lambda_{i,t}} = 1 -2t(1-t) = 2t^2 - 2t +1 \geq \frac12
 $$
 where the minimum is achieved at $t = \frac12$.
 This finishes the proof.
 \end{proof} 
  
We summarize the above discussion into the following from the above consideration of
the two cases above
\begin{prop}\label{prop:pinching-estimate}
Let $g_0$, $g_1$ and $J_0$, $J_1$ be as above, and consider the convex sum
$g_t = (1-t)g_0 + tg_1$. Then
$$
\frac12 \leq \frac1{\lambda_{i,t}} \leq  \frac12 + \frac14 \left(C + \frac1C\right)
$$
for all $i$ and $0 \leq t \leq 1$.  Furthermore the constant $C$
depends only on $g_0, \, g_1$.
\end{prop}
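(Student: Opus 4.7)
The plan is to assemble the two case-analyses carried out above into a single uniform bound valid for all $i$ and all $t \in [0,1]$, and then explain why the resulting constant depends only on $g_0$ and $g_1$.

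First I would handle the interior case $t \in (0,1)$. The preceding discussion has already separated the spectrum of $-J_1 J_0$ into two classes: the real eigenvalues and the non-real eigenvalues of unit modulus (these are the only possibilities, forced by the fact that $-J_1 J_0 - J_0 J_1$ is $g_t$-symmetric with real spectrum together with Lemma~\ref{lem:curious}). For real $\lambda_i$, the bilipschitz hypothesis \eqref{eq:A-bilipschiz} applied to an eigenvector via the identity $g_0(u_i,u_i) = \lambda_i g_1(u_i,u_i)$ forces $1/C \leq \lambda_i \leq C$, hence $2 \leq \lambda_i + 1/\lambda_i \leq C + 1/C$, and formula \eqref{eq:-J1J0-J0J1} converts this into the bound \eqref{eq:lambdabound1}. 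For non-real $\lambda_i$ of unit modulus, Lemma~\ref{lem:positivity} gives $0 \leq \lambda_i + 1/\lambda_i < 2$, and the same formula yields \eqref{eq:lambdabound2}. Taking the union of the two intervals gives $\tfrac12 < 1/\lambda_{i,t} \leq \tfrac12 + \tfrac14(C + 1/C)$ for every $i$ and every $t \in (0,1)$.

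Next I would dispose of the two endpoints $t = 0$ and $t = 1$ by direct calculation. At $t=0$ one has $g_t = g_0$, so the operator $A_t$ defined by $\omega(u,v) = g_t(A_t u, v)$ coincides with $J_0$, which is $g_0$-orthogonal; hence $A_t A_t^* = -J_0^2 = \mathrm{Id}$ and $\lambda_{i,0} = 1$ for every $i$. Symmetrically $\lambda_{i,1} = 1$. Both endpoint values lie in the closed interval $[\tfrac12,\,\tfrac12 + \tfrac14(C + 1/C)]$, which promotes the strict lower bound of the interior estimate to the non-strict lower bound $\tfrac12 \leq 1/\lambda_{i,t}$ claimed in the statement.

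Finally, the constant $C$ entering the upper bound is precisely the bilipschitz (equivalently quasi-isometry) constant appearing in \eqref{eq:A-bilipschiz} between $g_0$ and $g_1$; since $J_0$ and $J_1$ were chosen so that $g_0 = \omega(\cdot,J_0\cdot)$ and $g_1 = \omega(\cdot, J_1\cdot)$ both lie in the fixed quasi-isometry class $\mathfrak{T}$, such a $C$ exists and depends only on the pair $(g_0,g_1)$, not on $t$ or on the point $x \in M$ at which the pointwise polar decomposition is performed. The main subtlety to watch for is the \emph{uniformity in $x$}: the constant $C$ is intrinsic to the quasi-isometry class $\mathfrak{T}$ and is therefore the same at every point, so the eigenvalue bounds hold pointwise with the same constant, which is exactly what is needed to combine with the two-step construction \eqref{eq:two-step} and inequality \eqref{eq:gJt-gt} to conclude that $g_{J_t}$ remains quasi-isometric to $g_t$ (and hence to $g_0$) with a constant independent of $t$. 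No further estimate is required; the proposition follows by simply recording the combined bounds.
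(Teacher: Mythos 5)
Your proof is correct and the core argument (merging the bounds from the real and unit-modulus cases to get the uniform interval for $t \in (0,1)$, and noting that $C$ is the pointwise-uniform quasi-isometry constant between $g_0$ and $g_1$) is exactly the paper's. The one place you deviate is the treatment of the endpoints $t = 0, 1$: the paper passes to them by a soft continuity argument, remarking that the strict inequality $\tfrac12 < 1/\lambda_{i,t}$ on $(0,1)$ relaxes to $\tfrac12 \leq 1/\lambda_{i,t}$ in the closure. You instead compute directly that $A_0 = J_0$, $A_1 = J_1$ are $g_0$- respectively $g_1$-orthogonal, so $A_t A_t^\ast = \mathrm{Id}$ at the endpoints and hence $\lambda_{i,0} = \lambda_{i,1} = 1$, which lies manifestly in $[\tfrac12, \tfrac12 + \tfrac14(C + 1/C)]$. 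Your version is a bit more explicit and sidesteps the small subtlety of tracking eigenvalue labels as $t$ varies, but the difference is cosmetic; both are sound, and the remainder of your write-up matches the paper's reasoning.
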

\begin{proof} First consider the case $0 < t <1$. By combining \eqref{eq:lambdabound1}, \eqref{eq:lambdabound2},
the proposition follows for $0 < t < 1$.
Then by continuity the same bounds also hold for $t = 0, \, 1$.
\end{proof}

\begin{rem} \begin{enumerate}
\item There is one interesting point hidden in the above proof of symmetry of the
endomorphism $-J_1 J_0-J_0 J_1$: there were no simple a priori reason for this symmetry
to hold with respect the metric $g_t$ for $0 < t < 1$ without the identity \eqref{eq:A*t-1At-1}.
Furthermore as we apply the
continuity argument from $0 < t < 1$ to the boundary points $t=0, \, 1$, there is
no a priori reason why the right hand side of
the equation \eqref{eq:-J1J0-J0J1} is continuous at $t = 0, \, 1$ either
unless we have the identity \eqref{eq:A*t-1At-1} which shows that
 it is a constant function over $t \in [0,1]$. In fact,
we can also verify the identity by directly computing
\beastar
&{}& \frac{1}{t(1-t)}((A_t^*)^{-1}A_t^{-1}-\text{\rm {Id}})\\
 & = & -J_1 J_0-J_0 J_1 -
\frac{(1-t)}t J_0^2 - \frac{t}{1-t} J_1^2 -\frac{\text{\rm {Id}}}{t(1-t)} \\
& = & -J_1 J_0-J_0 J_1-2\text{\rm {Id}}.
\eeastar
The apparent singular behavior at $t = 0, \, 1$ of the second expression
disappears by the fact that $J_i$  satisfy the equation $J_i^2 = -\text{\rm Id}$ for both $i = 0, \, 1$
and the equality
$$
\frac{(1-t)}t  + \frac{t}{1-t} -\frac{1}{t(1-t)} = -2.
$$
\item Indeed the above proof seems to display some curious linear algebraic
interaction between the symplectic form and its compatible almost complex
structures, which we think deserves further underpinning.
\end{enumerate}
\end{rem}

\subsection{Tameness of the metric $g_{J_t}$}
\label{subsec:gJ-tame}

To conclude that $J_t$ is $(\omega,\mathfrak{T})$-tame and 
$J_t\in \CJ_{\omega;\mathfrak{T}}$ for $t\in(0,1)$, 
we have to show that $g_{J_t}$ is tame, i.e., complete and of bounded $\|R_{g_{J_t}}\|_{C^1}$
and has a uniform injectivity  lower bound for $t \in [0,1]$.
We have shown that $g_t$ and $g_{J_t}$ are quasi-isometric (and so bilipschitz) 
in the last subsection, and $g_{J_t}$ is also tame by 
Theorem \ref{thm:no-cusp}.

\section{Quasi-isometric equivalence and contractibility of $\CJ_{\omega;\mathfrak{T}}$}

We now go back to the proof of contractibility of $\CJ_{\omega;\mathfrak{T}}$ and
are ready to wrap-up the proof of  Theorem \ref{thm:omegaTJ-contractible}.

Let $J_0$ be a given $\omega$-tame almost complex structure and $J_1$
be a general element of $\CJ_{\omega;\mathfrak{T}}$. We consider the associated metric
$g_{J_i}$ for $i=0, \, 1$. Since both $J_0, \, J_1$ are in the same quasi-isometry class
$\CJ_{\omega;\mathfrak{T}}$, there is a constant $C \geq 1$ such that
$g_1$ is $C$-quasi-isometric with $g_0$.

We consider the map
$$
\CG: \CJ_{\omega;\mathfrak T} \to \text{\rm Riem}_{\mathfrak{T}}(M)
$$
given by $\CG(J): = g_J = \omega(\cdot, J \cdot)$ which is well-defined by
the definition of $\CJ_{\omega;\mathfrak T}$.
We also denote by
$$
\CP: \text{\rm Riem}_{\mathfrak{T}}(M) \to \CJ_{\omega;\mathfrak{T}}; \quad g \mapsto J_g
$$
Both maps are continuous in strong $C^r$ topology. The compoisition
$\CG \circ \CP$ is nothing but the map obtained by the aforementioned two-step 
process \eqref{eq:two-step} which is also continuous in strong $C^r$ topology.

Consider the subset
$$
\text{\rm Riem}_{\mathfrak{T};\omega}(M): = \{ g \in \text{\rm Riem}_{\mathfrak{T}}(M)
\mid g = g_J, \, J \in \CJ_{\omega;\mathfrak{T}} \}.
$$
The following property of composition plays a fundamental role in our proof of
contractibility of $\CJ_{\omega;\mathfrak{T}}$.

\begin{lem}\label{lem:PG-retracts}
The image of composition $\CG \circ \CP$ is a retraction of
$$
\text{\rm Riem}_{\mathfrak{T};\omega}(M)  \subset \text{\rm Riem}_{\mathfrak{T}}(M),
$$
i.e.,
\be\label{eq:retraction}
\CG \circ \CP|_{\text{\rm Riem}_{\mathfrak{T};\omega}(M) }
= \text{\rm Id}|_{\text{\rm Riem}_{\mathfrak{T};\omega}(M) }.
\ee
\end{lem}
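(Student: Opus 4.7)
\medskip

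\noindent\textbf{Proof plan.} The strategy is to unwind the two definitions of $\CG$ and $\CP$ and show that when the starting metric $g$ is already $\omega$-compatible, the polar decomposition is trivial, so that $\CP(g)$ recovers the original $J$ and $\CG(\CP(g)) = g$. Fix $g \in \Riem_{\mathfrak{T};\omega}(M)$ and pick $J \in \CJ_{\omega;\mathfrak{T}}$ with $g = g_J = \omega(\cdot, J\cdot)$. I will show that the endomorphism $A = A(\omega,g)$ produced in Subsection \ref{subsec:pinching} satisfies $A = J$; this is the only nontrivial identity, and everything else is bookkeeping.

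First, recall that $A$ is defined by the relation $\omega(u,v) = g(Au,v)$. Substituting $g = \omega(\cdot, J\cdot)$, this becomes $\omega(u,v) = \omega(Au, Jv)$, which must hold for all $u,v$. The $\omega$-invariance of a compatible $J$, i.e., $\omega(Ju,Jv) = \omega(u,v)$, exhibits $A = J$ as a solution; uniqueness of $A$ (from nondegeneracy of $\omega$ and $g$) then forces $A = J$.

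Next, compute $A^*$ with respect to $g = g_J$. Since $J$ is $g_J$-skew, we have $A^* = -A = -J$, so
\[
A A^* = J(-J) = -J^2 = \Id.
\]
Hence $\sqrt{AA^*} = \Id$, the polar decomposition of $A$ is trivial, and the formula \eqref{eq:Jt} gives
\[
J_g := (\sqrt{AA^*})^{-1} A = A = J.
\]
In other words, $\CP(g) = J$.

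Finally, applying $\CG$ yields $\CG(\CP(g)) = \CG(J) = \omega(\cdot, J\cdot) = g_J = g$, which is exactly the retraction identity \eqref{eq:retraction}. The main (and essentially only) conceptual point is the observation $A = J$ for $\omega$-compatible $g = g_J$; the rest follows from the uniqueness of the polar decomposition and the algebraic identity $J^2 = -\Id$. I do not anticipate any regularity or analytic obstacle, since the statement is pointwise algebraic in the fibers of $\mathrm{Sym}^2_+(TM)$ and $\End(TM)$.
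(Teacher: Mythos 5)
Your proof is correct and is essentially the expanded version of the paper's own argument: the paper dispatches the lemma by citing the uniqueness of the polar decomposition \eqref{eq:Omega=GA}, and your verification that $A=J$ and $\sqrt{AA^*}=\Id$ when $g=g_J$ (hence $\CP(g)=J$ and $\CG(\CP(g))=g_J=g$) is precisely the content behind that one-line citation.
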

\begin{proof} This is an immediate consequence of
 the uniqueness of polar decomposition: See \eqref{eq:Omega=GA}.
 More specifically, we have
  $$
  \CG(\CP(g_J))=\CG(J_{g_J})=\omega(\cdot, J_{g_J} \cdot)=g_J
  $$
  for all $J \in \CJ_\omega$, and hence the identity \eqref{eq:retraction}.
 \end{proof}
 
 \begin{rem}
 As we mentioned before, $\CG(\CP(g))$ is not necessarily the same as $g$
if $g \in \text{\rm Riem}_{\mathfrak{T}}(M) \setminus \text{\rm Riem}_{\mathfrak{T};\omega}(M)$. 
\end{rem}

Going back to the proof of Theorem \ref{thm:omegaTJ-contractible},
we denote by
$$
\CH_{g_0}: [0,1] \times \text{\rm Riem}_{\mathfrak{T}}(M) \to \text{\rm Riem}_{\mathfrak{T}}(M)
$$
the above contraction $(t,g) \mapsto (1-t)g_0 + tg$ which is
$C_g$-quasi-isometric by Theorem \ref{thm:convexity} for a constant $C_g \geq 1$
(depending on $g$), and hence the homotopy is well-defined and continuous
in $C^\infty$ topology.

Then to construct the required contraction, we have only to take the composition
$$
\CP \circ \CH_{g_0}\circ (\text{\rm Id}_{[0,1]} \times \CG)
:[0,1] \times \CJ_{\omega;\mathfrak{T}} \to \CJ_{\omega;\mathfrak{T}}
$$
which defines the required contraction homotopy of $\CJ_{\omega;\mathfrak{T}}$.
This finally finishes the proof of  Theorem \ref{thm:omegaTJ-contractible}.
\qed

The following theorem is an immediate consequence of the standard
practice in symplectic topology via the machinery of pseudo-holomorphic curves.

\begin{theorem} Denote by $(M,\omega; \mathfrak{T})$ a (noncompact)
smooth symplectic manifold $M$ with a given quasi-isometry class $\mathfrak{T}$ of 
Riemannian metrics on $M$. Let $\aleph(\omega,\mathfrak{T};J)$ be an invariant 
constructed via $J$-holomorphic curves with $J \in \CJ_{\omega;\mathfrak{T}}$. 
Then it does not depend on the choice of such $J$'s. 
More precisely, we have
$$
\aleph(\omega,\mathfrak{T};J) = \aleph(\omega,\mathfrak{T}
;\phi^*J).
$$
for any symplectic diffeomorphism $\phi:(M,\omega) \to (M,\omega)$ that preserves
$\mathfrak{T}$, i.e., for those that satisfies
$\phi^*J \in \CJ_{\omega;\mathfrak{T}}$ whenever $J \in \CJ_{\omega;\mathfrak{T}}$.
\end{theorem}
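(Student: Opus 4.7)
The plan is to deduce the asserted invariance from the contractibility result Theorem \ref{thm:omegaTJ-contractible} combined with the standard continuation (cobordism) argument for moduli spaces of pseudo-holomorphic curves, the nontrivial point being that the uniform tameness estimates of Part 1 make such a continuation actually work on the noncompact manifold $M$.

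First, given any two elements $J_0,\, J_1 \in \CJ_{\omega;\mathfrak{T}}$, I would use Theorem \ref{thm:omegaTJ-contractible} to produce a path $\{J_t\}_{t\in[0,1]}$ inside $\CJ_{\omega;\mathfrak{T}}$ joining $J_0$ to $J_1$ which is continuous in strong $C^r$ topology for some $3 \leq r < \infty$. The decisive property is that \emph{the entire path remains in the single quasi-isometry class} $\mathfrak{T}$. Consequently, by the proof of Theorem \ref{thm:omegaTJ-contractible} (specifically the pinching estimates of Subsection \ref{subsec:pinching} and the tameness discussion of Subsection \ref{subsec:gJ-tame}, which rely on Theorems \ref{thm:convexity} and \ref{thm:no-cusp}), the associated metrics $g_{J_t}=\omega(\cdot,J_t\cdot)$ enjoy \emph{uniform} curvature bound $\sup_{t\in[0,1]}\|R_{g_{J_t}}\|_{C^1}<\infty$ and a \emph{uniform} positive injectivity radius lower bound $\inf_{t\in[0,1]}\iota_{g_{J_t}}>0$.

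Second, I would feed these uniform geometric bounds into the standard parameterized moduli space/cobordism machinery underlying any invariant $\aleph(\omega,\mathfrak{T};J)$ constructed from $J$-holomorphic curves. The uniform tame bounds along $\{J_t\}$ supply the $C^0$-estimates, gradient bounds, and Gromov-type compactness required to organize the parameterized moduli space
\[
\widetilde{\mathcal{M}}(\{J_t\}) \;=\; \{(t,u)\mid t\in[0,1],\ u \text{ is a finite-energy } J_t\text{-holomorphic curve}\}
\]
into a smooth cobordism between $\mathcal{M}(J_0)$ and $\mathcal{M}(J_1)$ after a generic perturbation within $\CJ_{\omega;\mathfrak{T}}$ (permissible since the class is contractible and thus path-connected in $C^r$). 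Since $\aleph$ is by assumption an invariant constructed from such moduli data, cobordism invariance yields $\aleph(\omega,\mathfrak{T};J_0)=\aleph(\omega,\mathfrak{T};J_1)$. Applying this to the pair $(J,\phi^*J)$, which both lie in $\CJ_{\omega;\mathfrak{T}}$ by the standing hypothesis on $\phi$, gives the desired equality.

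The hard part, and indeed the raison d'\^etre of Part 1 of the paper, is verifying that the continuation argument does not break down on noncompact $M$. On a compact symplectic manifold the corresponding statement is immediate from Gromov's original contractibility of $\CJ_\omega$; on a noncompact $M$, Example \ref{exam:2-dim} and Proposition \ref{prop:not-connected} show that $\CJ_\omega$ need not even be path-connected in strong $C^\infty$ topology, so one cannot hope to connect $J_0$ and $J_1$ by a continuous path of \emph{tame} almost complex structures without restricting the quasi-isometry class. Restricting to $\CJ_{\omega;\mathfrak{T}}$ is precisely what makes the uniform injectivity radius bound of Theorem \ref{thm:iota-bounds} available for $\{g_{J_t}\}$, which in turn prevents ideal-boundary escape of $J_t$-holomorphic curves and rules out the \textquotedblleft cusp-developing\textquotedblright\ of Theorem \ref{thm:no-cusp}. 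Thus the essential technical input is not in the cobordism argument itself—which is entirely standard once uniform bounds are in hand—but in justifying that a continuous path in $C^r$ with $r\geq 3$ inside $\CJ_{\omega;\mathfrak{T}}$ exists and carries the requisite uniform geometric estimates, both of which are delivered by the earlier theorems.
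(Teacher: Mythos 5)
Your proposal is correct and matches the paper's intent; the paper in fact offers no written proof at all, only the single sentence that the theorem ``is an immediate consequence of the standard practice in symplectic topology via the machinery of pseudo-holomorphic curves.'' You correctly identify that this standard practice is the continuation/cobordism argument made available by the contractibility of $\CJ_{\omega;\mathfrak{T}}$ (Theorem \ref{thm:omegaTJ-contractible}) together with the uniform curvature and injectivity-radius bounds along paths of metrics in $\mathfrak{T}$ from Theorems \ref{thm:convexity}, \ref{thm:iota-bounds} and \ref{thm:no-cusp}, which is exactly what the paper's one-line justification implicitly relies upon.
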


This leads us to consider the following subgroup of $\text{\rm Symp}(M,\omega)$
similarly as in \cite{choi-oh:construction}.

\begin{defn}
 We denote by
$$
\text{\rm Diff}_{\text{\rm Lip}}(M,\mathfrak{T})
$$
the automorphism group of $\mathfrak{T}$. We then define the automorphism group
of $(\omega;\mathfrak{T})$ to be the intersection
$$
\Symp(M,\omega) \cap\text{\rm Diff}_{\text{\rm Lip}}(M,\mathfrak{T})
=: \Symp_{\text{\rm Lip}}(M,\omega;\mathfrak{T}).
$$
\end{defn}

\begin{rem}
In the present paper, we have studied the Lipschitz topology in the point of view of
\emph{large-scale} symplectic topology. We can also contemplate the Lipschitz topology
in the point of view of \emph{micro-scale} or \emph{PL} topology and low regularity
symplectic or contact topology. (See \cite{kleiner-muller-xie} and references therein
for some studies of bilipschitzian contact invariants.) We will come back to
the study of the micro-scale symplectic topology elsewhere.
\end{rem}

\section{The case of 2 dimensional Riemann surfaces}

In \cite{choi-oh:construction}, the authors constructed a Fukaya category
$Fuk(M,\CT)$ associated to each hyperbolic structure $\CT$ on a Riemann surface,
especially of infinite type. Denote by $\mathfrak T$ the quasi-isometry class of the
hyperbolic structure $\CT$.
It is stated that the $A_\infty$ category is quasi-equivalent
when one deforms almost complex structures tame to $\omega$ varies inside the
set $\CJ(\CT)$ consisting of \emph{$\CT$-tame} almost complex structures. 

We fix  the reference hyperbolic structure on $M$ in the sense of \cite{liu-papado}.

\begin{defn}[Hyperbolic Riemann surface]\label{defn:hyperbolic-structure-intro}
 A hyperbolic Riemann surface
is a triple $(\Sigma,J_0, g_0)$ whose universal cover is isometric to the unit disk.
We call it \emph{tame} if it has bounded curvature and its injectivity radius
is positive.
\begin{enumerate}
\item A \emph{hyperbolic structure}, denoted by $\CT = \CT_\Sigma$,
of a surface $\Sigma$ is a choice of  $(\Sigma,J_0, g_0): = (\Sigma,J_\CT,g_\CT)$
that is tame.
\item
$\CT$ also determines a symplectic form
$$
\omega_\CT = g_0(J_0 \cdot, \cdot)
$$
which we call $\CT$-symplectic form.
\end{enumerate}
\end{defn}

\begin{defn}[$\omega_\CT$-tame almost complex structures]
 We call $J$ an $\omega_\CT$-tame almost complex structure
 if the following holds:
 \begin{enumerate}
 \item It is $\omega_\CT$-tame.
 \item The metric $g_J = \omega_\CT(\cdot, J \cdot)$ is
 quasi-isometric to $g_\CT = \omega(\cdot, J_\CT \cdot)$.
 \end{enumerate}
We denote by $\CJ(\CT)$ the set of $\omega_\CT$-tame almost
 complex structures.
 \end{defn}

An immediate corollary of Theorems \ref{thm:convexity} and \ref{thm:omegaTJ-contractible}
 is the following special case of two-dimensions. 
 
\begin{theorem} \label{thm:surface-case} 
Let $\Sigma$ be a noncompact surface equipped with hyperbolic structure.
Denote by $\mathfrak T$ be a quasi-isometry class of hyperbolic structures of $\Sigma$ and by $\Riem_\mathfrak T(\Sigma)$
the set of Riemannian metrics quasi-isometric to the given hyperbolic metric. Then
the set $\Riem_\mathfrak T(\Sigma)$ is contractible in  strong $C^r$ topology with
$3 \leq r < \infty$.
\end{theorem}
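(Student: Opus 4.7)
The plan is to observe that Theorem \ref{thm:surface-case} is just the two-dimensional specialization of the already-proved Theorem \ref{thm:Cr-contractibility} (equivalently, Theorem \ref{thm:contractibility-intro}), together with the verification that the hypotheses of that theorem are met in the surface setting. So the task reduces to checking (i) that the relevant class $\Riem_{\mathfrak T}(\Sigma)$ is a quasi-isometry class of \emph{tame} Riemannian metrics in the sense of Part 1, and (ii) that the convex-combination contraction to a fixed reference hyperbolic metric lands in $\Riem_{\mathfrak T}(\Sigma)$ and is continuous in strong $C^r$ topology for $3 \leq r < \infty$.

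First I would unpack Definition \ref{defn:hyperbolic-structure-intro}: a hyperbolic structure $\CT$ on $\Sigma$ comes equipped with a Riemannian metric $g_\CT$ of constant curvature $-1$ which, by the tameness clause in that definition, is complete and has $\iota_{g_\CT} \geq \varepsilon_0 > 0$ for some $\varepsilon_0$. Hence $\|R_{g_\CT}\|_{C^r} < \infty$ for every $r$ (all covariant derivatives of the curvature vanish) and $g_\CT$ is $C^\infty$-tame. Any $g \in \Riem_{\mathfrak T}(\Sigma)$ is by definition quasi-isometric to $g_\CT$ and, in our setup, $C^r$-tame. Thus $\Riem_{\mathfrak T}(\Sigma)$ is precisely a set of the form considered in Theorem \ref{thm:Cr-contractibility} with reference metric $g_{\text{\rm ref}} = g_\CT$.

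Next I would invoke Theorem \ref{thm:convexity} to show that, for any $g_0, g_1 \in \Riem_{\mathfrak T}(\Sigma)$, the convex sum $g_s = (1-s)g_0 + s g_1$ remains in $\Riem_{\mathfrak T}(\Sigma)$: the sectional curvature bound follows from Theorems \ref{thm:Rs-bounds} and \ref{thm:DRs-bounds}, the uniform injectivity-radius lower bound follows from Theorem \ref{thm:iota-bounds}, and Proposition \ref{prop:A-bllipschitz-convex} ensures that $g_s$ is itself quasi-isometric to $g_0$ (and hence to $g_\CT$), so stays in $\mathfrak T$. Taking $g_1 = g_\CT$ gives a well-defined contraction map $L(s,g) = (1-s)g + s g_\CT$ into $\Riem_{\mathfrak T}(\Sigma)$.

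Finally, continuity of $L$ in strong $C^r$ topology for $3 \leq r < \infty$ is supplied directly by Theorem \ref{thm:Cr-contractibility}. Combining the three ingredients yields contractibility. Since each step is a direct citation, there is no real obstacle beyond matching the definitions; the only point worth emphasizing is the exclusion of $r = \infty$ and $r < 3$, which is inherited from the Riemannian input (the failure of the uniform $C^2$ bound on the exponential maps at lower regularity and the strong $C^\infty$ issue discussed around Theorem \ref{thm:Cr-contractibility} and Remark \ref{rem:direct-limit-topology}).
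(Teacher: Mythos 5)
Your proposal is correct and follows essentially the same route as the paper: Theorem \ref{thm:surface-case} is obtained by specializing the general Riemannian results to the surface setting, once one observes that a tame hyperbolic metric $g_\CT$ is a valid $C^\infty$-tame reference metric (it has constant curvature and injectivity radius bounded away from zero by the tameness clause of Definition \ref{defn:hyperbolic-structure-intro}), so that Theorem \ref{thm:convexity} keeps the convex path $g_s = (1-s)g + s g_\CT$ in $\Riem_{\mathfrak T}(\Sigma)$ and Theorem \ref{thm:Cr-contractibility} gives the continuity of that path. Your citation of Theorem \ref{thm:Cr-contractibility} is in fact the most direct way to obtain a statement purely about $\Riem_{\mathfrak T}(\Sigma)$; the paper instead points to Theorem \ref{thm:omegaTJ-contractible}, which concerns $\CJ_{\omega;\mathfrak T}$ rather than the metric space itself, so your phrasing is, if anything, cleaner than the paper's own attribution.
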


\begin{rem}
This theorem in turn is used by the authors of \cite{choi-oh:construction} therein
for the construction of a Fukaya category of
a surface of infinite type $Fuk(\Sigma,\CT)$  as a \emph{quasi-isometric symplectic invariants}
which a priori depends on the choice of quasi-isometry class $\mathfrak T$ of the hyperbolic structure of
the Riemann surface $\Sigma$.
\end{rem}

\appendix

\section{ Proof of Lemma \ref{lem:IFT-condition} }
\label{sec:IFT-condition}

Recall the definition 
$$
F_i(x): = (\psi_p^{g_0})^{-1}( \psi_p^{g_{s_i}} (x))
$$ 
from \eqref{eq:Fi} which are  well-defined as  maps
$$
F_i: B^n(r_0/2) \to B^n(r_0)
$$
with fixed domain and codomain. 

For the proof of Lemma \ref{lem:IFT-condition}, it is enough to prove the following estimate of
the derivative of the Jacobian $\left(\frac{\del y_i}{\del x_j}\right)$.

\begin{lem}[Compare with Lemma 4.3 \cite{cheeger:finiteness}]\label{lem:2nd-derivative}
Given $S, \, S_1 > 0$, there exists a
constant $C(S,S_1)$ such that if $r_0 < \frac{\pi}{2 \sqrt{S}}$ and 
$\|R_i\|_{C^0} < S, \, \|DR_i\|_{C^0} < S_1 $, then on $B^n(r) \subset \R^n$, we have
\be\label{eq:D2Fi}
\|D^2 F_i\|_{C^0} \leq C(S,S_1).
\ee
\end{lem}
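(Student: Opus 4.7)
The plan is to reduce the bound on $\|D^2 F_i\|_{C^0}$ to a bound on $\|D^2\psi_p^g\|_{C^0}$ for $g\in\{g_0, g_{s_i}\}$, and then to exploit the Hamiltonian description of the geodesic flow together with the new hypothesis $\|DR_g\|_{C^0}\le S_1$. By the chain rule, $D^2 F_i(x)$ is a universal algebraic expression in $D(\psi_p^{g_0})^{-1}$, $D^2(\psi_p^{g_0})^{-1}$, $D\psi_p^{g_{s_i}}$, and $D^2\psi_p^{g_{s_i}}$, evaluated at corresponding points. Lemma \ref{lem:cheeger} provides uniform $C^0$-bounds on $D\psi_p^{g_0}$ and $D\psi_p^{g_{s_i}}$, and hence on $D(\psi_p^{g_0})^{-1}$. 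Since the standard formula $D^2(\phi^{-1}) = -D\phi^{-1}\circ D^2\phi\circ(D\phi^{-1}, D\phi^{-1})$ expresses $D^2(\psi_p^{g_0})^{-1}$ in terms of $D(\psi_p^{g_0})^{-1}$ and $D^2\psi_p^{g_0}$, the problem reduces to establishing a pointwise bound $\|D^2\psi_p^g\|_{C^0}\le C(S, S_1)$ for any $g$ satisfying $\|R_g\|_{C^0}\le S$ and $\|DR_g\|_{C^0}\le S_1$.

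For this, I view $\psi_p^g(\xi)=\pi\circ\Phi^1_g(p, I_p^g(\xi))$ through the geodesic flow $\Phi^t_g$, which in the $g_{\text{rf}}$-normal chart is the flow of \eqref{eq:geodesic-flow}. Then $D\Phi^t_g$ and $D^2\Phi^t_g$ satisfy linear variational ODEs whose coefficients are polynomial expressions in $\Gamma, \partial\Gamma$ and in $\Gamma, \partial\Gamma, \partial^2\Gamma$, respectively. Lemma \ref{lem:Gamma-bound} already controls $\|\Gamma\|_{C^0}$ and $\|\partial\Gamma\|_{C^0}$ by $\|R_g\|_{C^0}$, so the essential new ingredient is the bound
\be
\|\partial^2\Gamma^k_{ij}\|_{C^0} \le C(S, S_1)
\ee
on the $g_{\text{rf}}$-ball $B^n(r)$. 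Differentiating the Taylor expansion \eqref{eq:gij} in $g$-normal coordinates once more shows that $\partial^3 g_{ij}(p)$ is a linear combination of components of $\nabla R_g$ at $p$ together with quadratic terms in $R_g$, pointwise bounded by a polynomial in $S$ and $S_1$. Substituting into the formula \eqref{eq:Gammakij} for $\Gamma^k_{ij}$ yields the desired bound on $\|\partial^2\Gamma^k_{ij}\|$ in $g$-normal coordinates, and Lemma \ref{lem:cheeger}, iterated one order higher, transfers this bound to $g_{\text{rf}}$-normal coordinates.

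The main obstacle is precisely this one-step iteration of Cheeger's Lemma: the estimate \eqref{eq:coordinate-change} only provides a $C^0$-bound on the Jacobian $\partial y^i/\partial x^j$ of the coordinate transition, whereas pulling $\partial^2\Gamma$ back from $g$-normal to $g_{\text{rf}}$-normal coordinates requires a $C^1$-bound on this Jacobian, hence a $C^2$-bound on the transition itself. The argument of \cite{cheeger:finiteness} can be carried through one further order under the additional hypothesis $\|DR_g\|_{C^0}\le S_1$, because the Jacobi equation governing the Jacobian along radial geodesics then has $C^1$-bounded coefficients, and its derivative along initial data satisfies a linear ODE whose inhomogeneity involves $\nabla R_g$. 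Granted this, Gronwall's inequality applied to the second-variation ODE for $\Phi^t_g$ on the compact time interval $[0,1]$ yields $\|D^2\Phi^1_g\|\le C(S,S_1)$, hence $\|D^2\psi_p^g\|_{C^0}\le C(S, S_1)$, and substituting into the chain-rule expression for $D^2 F_i$ gives the desired bound $\|D^2 F_i\|_{C^0}\le C(S,S_1)$.
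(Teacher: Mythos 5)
Your overall strategy is the same as the paper's: reduce $\|D^2 F_i\|$ to second derivatives of exponential maps, control these via the (second-)variation ODE for the geodesic flow, and close the estimate with Gronwall using $\|R\|_{C^0}\le S$ and $\|DR\|_{C^0}\le S_1$. The paper's appendix does this by observing that \eqref{eq:D2Fi} is equivalent to $|\partial^2 y_i/\partial x_k\partial x_j|\le C(S,S_1)$ in the two normal charts, reducing via Lemma \ref{lem:cheeger} to a bound on the covariant derivatives $\nabla_{\partial/\partial x_j}$, $\nabla_{\partial/\partial y_k}$, and then invoking Cheeger's proof of \cite[Lemma 4.3]{cheeger:finiteness} verbatim. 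So at the level of the governing technique you and the paper agree.

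Where your write-up is weaker is in the middle step. You try to bound $\partial^2\Gamma^k_{ij}$ for the moving metric in $g_{\text{\rm rf}}$-normal coordinates by (i) differentiating the Sakai expansion \eqref{eq:gij} in $g$-normal coordinates and (ii) transferring to $g_{\text{\rm rf}}$-normal coordinates. Step (i) only gives the jet of $g_{ij}$ \emph{at the center} $p$; the $O(r^5)$ remainder in \eqref{eq:gij} involves $\nabla^2 R$ and higher, which you have no bound on, so the Taylor expansion alone does not give $\|\partial^2\Gamma\|_{C^0}$ on the whole ball. Step (ii) is exactly the $C^2$ bound on a normal-coordinate transition — i.e., the content of the lemma you are proving — which you acknowledge as a circularity and then "resolve" by asserting that Cheeger's argument iterates one order higher. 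But at that point the Taylor-expansion detour has become superfluous: the Jacobi-equation/Gronwall estimate, which is what actually carries the day (for Cheeger and for the paper), works directly with $R$ and $\nabla R$ as tensors along radial geodesics and does not pass through coordinate bounds on $\partial^2\Gamma$ at all. A cleaner version of your argument would drop the $\partial^2\Gamma$ discussion entirely and go straight from the chain-rule reduction to the second-variation ODE in tensorial form, exactly as Cheeger does. With that streamlining your proof matches the paper's.
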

One essential difference between the framework of \cite[Lemma 4.3]{cheeger:finiteness}
and that of Lemma \ref{lem:2nd-derivative} is that while the former deals with coordinate
change maps between two normal coordinates of \emph{the same metric}, the latter 
involves normal coordinates of two different metrics \emph{centered at a sequence of 
the same point}. The main interest of our study is the case when the sequence
escapes to infinity on $M$. The linear isometry $I_p^g$ is used to make the center
of the balls fixed at the origin in $\R^n$ so that we can compare the 
exponential maps of two different metrics $g_0$ and $g_{s_i}$.

Let $(x_1, \cdots, x_n)$ and $(y_1, \cdots, y_n)$ be normal coordinate systems 
of  $g_{s_i}$ and of $g_0$ on  $B_{r_0/2}(p_i)$ and $B_{r_0}(p_i)$ 
 based on frames  $\{e_i\}_{p_i }^{g_{s_i}}, \, \{f_i\}_{p_i}^{g_0}$, respectively.
 Through the linear isometries $I_{g_{s_i}}^{p_i}$ and $I_{g_0}^{p_i}$, we may
 safely assume that the coordinate systems $(x_1, \cdots, x_n)$ and $(y_1, \cdots, y_n)$
 are defined on $\R^n$ by identifying $y_i$ with $y_i \circ I_{g_0}^{p_i}$ and
 $x_i$ with $x_i \circ I_{g_{s_i}}^{p_i}$. After this identification, 
 \eqref{eq:D2Fi} is equivalent to the bound
 \be\label{eq:2nd-derivative-bound}
 \left| \frac{\del^2 y_i}{\del x_k\del x_j} \right| \leq C(S,S_1).
 \ee
 Then using Lemma \ref{lem:cheeger}, it will suffice to have an upper bound for
 the covariant derivatives
$$
 \left\| \nabla_{\frac{\del}{\del x_j}}\right\|, \,  \left\|\nabla_{\frac{\del}{\del y_k}}  \right\|
 $$
 in terms of $S, \, S_1$. With these preparations, Cheeger's proof  of 
 \cite[Lemma 4.3]{cheeger:finiteness} verbatim applies without change.
 This finishes the proof.
 
\section{Direct limit strong $C^\infty$ topology}
\label{sec:direct-limit}

In this appendix, we introduce the notion of 
\emph{direct limit strong $C^\infty$ topology}, which is
weaker that the usual definition of strong $C^\infty$ topology but 
stronger than the weak $C^\infty$ topology, as follows. 

Consider any tensor bundle $\CT \to M$ over a noncompact
Riemannian manifold $(M,g)$ equipped with a tame metric $g$. 

We consider the space $C^\infty(\CT)$ of smooth sections of $\CT$.
Let us fix an atlas of $M$ of the type $\{\B_\alpha(r)\}$ of geodesic
balls of radius $r > 0$ with $r < \iota_g$.  Then consider the filtration of 
$C^\infty(\CT)$ given by
\be\label{eq:Cinfty-epsilon}
C^\infty_{N}(\CT): = \{T \in \Gamma(\CT) \mid \|D^rT\| < N\,  \forall r \in \N\}
\ee
equipped with the subspace topology of the strong $C^\infty$ topology. Then we have
$$
C^\infty(\CT) = \bigcup_{N \in \N} C^\infty_{N}(\CT)
$$ 
as an increasing union. We call the direct limit topology of the directed system
$$
C^\infty_{1}(\CT) \hookrightarrow C^\infty_{2}(\CT) \hookrightarrow \cdots \hookrightarrow C^\infty_{N}(\CT)
\cdots  \longrightarrow C^\infty(\CT)
$$
the \emph{direct limit strong $C^\infty$ topology}. It is easy to proof the following contractibility
by a slight modification of the proof of Theorem \ref{thm:Cr-contractibility}.

\begin{theorem}\label{thm:Cinfty-contracibility}
The path $s \mapsto (1-s) g + s g_{\text{\rm rf}}$ is continuous
with respect to the direct limit $C^\infty$ topology.
\end{theorem}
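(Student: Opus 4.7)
The strategy is to factor the path through a single bounded level of the filtration $\{C^\infty_N(\CT)\}_{N \in \NN}$ and then adapt the proof of Theorem \ref{thm:Cr-contractibility} by exploiting the uniform derivative bounds available on each such level.

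First I would observe that the path remains in a single level: assuming $g, g_{\text{\rm ref}} \in C^\infty_N(\CT)$, the convex-combination estimate gives $\|D^r g_s\| \leq (1-s)\|D^r g\| + s \|D^r g_{\text{\rm ref}}\| < N$ for every $r \in \NN$ and every $s \in [0,1]$, so $g_s \in C^\infty_N(\CT)$ throughout. Hence the map $s \mapsto g_s$ factors as $[0,1] \to C^\infty_N(\CT) \hookrightarrow C^\infty(\CT)$, where the inclusion is continuous by the very definition of the direct limit topology. Continuity into the direct limit therefore reduces to continuity of the factored map into the subspace topology on $C^\infty_N(\CT)$.

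Next, I would verify this level-wise continuity by mimicking the argument in Theorem \ref{thm:Cr-contractibility}. The key feature of restricting attention to $C^\infty_N$ is that every derivative is uniformly bounded by $N$, so the scaling $D^k(g_s - g_{s_0}) = (s_0 - s) D^k(g - g_{\text{\rm ref}})$ satisfies $\|D^k(g_s - g_{s_0})\| \leq 2N |s - s_0|$ with the coefficient $2N$ independent of the derivative order $k$. Using Theorem \ref{thm:iota-bounds} to produce a uniform-size geodesic ball atlas $\{B_\alpha(r)\}$ of $M$, the sup-norm estimates are well-controlled globally, and the choice of $\delta$ for a given basic neighborhood $\CB_{\{\epsilon_i\}}(g_{s_0}) \cap C^\infty_N(\CT)$ can be extracted as in the finite $C^r$ argument, but now applied uniformly across derivative orders thanks to the $k$-independent coefficient.

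The main obstacle, and the reason the corresponding theorem fails in the ordinary strong $C^\infty$ topology, is controlling the potentially unbounded growth of the coefficients $\|D^k(g - g_{\text{\rm ref}})\|$ as $k \to \infty$: in the unrestricted setting one cannot match the countably infinite family of seminorm bounds $\{\epsilon_i\}$ with a single $\delta > 0$. The direct-limit construction circumvents this precisely by stratifying $C^\infty(\CT)$ into levels on which this growth is tamed; once continuity is verified at the level of $C^\infty_N(\CT)$, continuity into the colimit is automatic from the definition of the direct limit topology, completing the proof.
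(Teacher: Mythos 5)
Your strategy — factor the path through a single stratum $C^\infty_N(\CT)$ containing both $g$ and $g_{\text{\rm ref}}$, observe the convex combination stays there, and push forward via the continuous inclusion $C^\infty_N(\CT) \hookrightarrow C^\infty(\CT)$ — is natural, and the first two steps are correct (the paper offers no proof beyond the assertion that the result follows by a slight modification of Theorem \ref{thm:Cr-contractibility}, so there is nothing there to compare against in detail).

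The gap is in the level-wise continuity step, and the $k$-uniform bound $\|D^k(g_s - g_{s_0})\| \leq 2N|s-s_0|$ does not cure it. The definition in Section \ref{sec:contractibility-g} places no lower bound on the tolerance sequence $\{\epsilon_k\}$ defining a basic neighborhood; take $\epsilon_k = 2^{-k}$. The set $\CB_{\{\epsilon_i\}}(g_{s_0}) \cap C^\infty_N(\CT)$ is still a proper open neighborhood of $g_{s_0}$ in the subspace topology — it contains $g_{s_0} + c\, g_{\text{\rm ref}}$ for small $c$, since $D^k g_{\text{\rm ref}} \equiv 0$ for $k \geq 1$ — but for $g_s$ to lie in it one needs $2N|s - s_0| < \epsilon_k$ for \emph{every} $k$, forcing $|s - s_0| < \inf_k \epsilon_k / (2N) = 0$. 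So unless $g = g_{\text{\rm ref}}$ the level-wise map is not continuous. The obstruction the direct-limit construction would need to remove is not the unbounded growth of $\|D^k(g - g_{\text{\rm ref}})\|$ in $k$, but the fact that a single basic open set of this topology imposes infinitely many independent conditions whose tolerances may shrink to zero; restricting to $C^\infty_N(\CT)$ does nothing about the latter. Indeed the direct limit topology is at least as \emph{fine} as the box-type strong $C^\infty$ topology of Section \ref{sec:contractibility-g} (each $\CB_{\{\epsilon_i\}}(g_{s_0})$ has open intersection with every stratum, hence remains open in the colimit), which is the wrong direction for the argument. A repair would be to equip each stratum $C^\infty_N(\CT)$ with the uniform Fr\'{e}chet $C^\infty$ topology, whose neighborhood base at $g_0$ is $\{g : \|D^k(g-g_0)\| < \epsilon \text{ for } k \leq m\}$ over finite $m$ and $\epsilon > 0$; there only finitely many derivative orders are constrained at a time, the argument of Theorem \ref{thm:Cr-contractibility} applies verbatim on each stratum, and the $k$-independent coefficient $2N$ in fact plays no role. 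As written, your proof claims more than the given hypotheses deliver.
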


\section{Proof of Lemma \ref{lem:curious}}
\label{sec:Jordan}

In this appendix, we give the details of the proof of Lemma \ref{lem:curious}.

Let $M$ be an invertible real matrix.
Note that every eigenvector of $M$ is also an eigenvector of $M+M^{-1}$,
but the converse might not hold in general.

Consider a Jordan decomposition of $M$.
Then there exists an upper triangular matrix $U$, which is called Jordan normal form,
and an invertible matrix $P$ such that
$$
M=PUP^{-1}.
$$
$U$ is a block diagonal matrix
$$
U=\begin{bmatrix}
U_1 &&\\
&\ddots&\\
&&U_k\\
\end{bmatrix}
$$
Each $U_i$ is a Jordan block of the form
$$
U_i=\begin{bmatrix}
\lambda_i &1&&\\
&\lambda_i&\ddots&&\\
&&\ddots&1\\
&&&\lambda_i\\
\end{bmatrix}
$$
Where $\lambda_i$ is an eigenvalue of $M$ which is not zero
since $M$ is invertible. Also note that
$$
M^{-1}=PU^{-1}P^{-1}.
$$
Since $U$ is an invertible block diagonal matrix,
$$
U^{-1}=\begin{bmatrix}
U_1^{-1} &&\\
&\ddots&\\
&&U_k^{-1}\\
\end{bmatrix}
$$
Also, each $U_i$ is an upper triangular matrix and the inverse of an upper triangular
matrix is upper triangular implies $U_i^{-1}$ is also upper triangular.
Since every diagonal entry of $U_i$ is $\lambda_i$ which is not zero,
every diagonal entry of $U_i^{-1}$ is $\frac{1}{\lambda_i}$.

Now we have
$$
\begin{aligned}
M+M^{-1}&=PUP^{-1}+PU^{-1}P^{-1}\\
&=P(U+U^{-1})P^{-1}.
\end{aligned}
$$
and $U+U^{-1}$ is a block diagonal matrix and each block is an upper triangular
matrix whose diagonal entries are of the form $\lambda_i+\frac{1}{\lambda_i}$.
Therefore, they are eigenvalues of $M+M^{-1}$.
This implies that every eigenvalue of $M+M^{-1}$ is of the form
$\lambda_i+\frac{1}{\lambda_i}$ and for every such eigenvalue,
there exists an eigenvector $u_i$ of $M$ with eigenvalue $\lambda_i$ such that
$$
(M+M^{-1})u_i=\left(\lambda_i+\frac{1}{\lambda_i}\right)u_i.
$$
This finishes the proof.

\def\cprime{$'$}
\providecommand{\bysame}{\leavevmode\hbox to3em{\hrulefill}\thinspace}
\providecommand{\MR}{\relax\ifhmode\unskip\space\fi MR }
\providecommand{\MRhref}[2]{%
  \href{http://www.ams.org/mathscinet-getitem?mr=#1}{#2}
}
\providecommand{\href}[2]{#2}

\end{document}